\def\RR{\mathbb{R}}
\def\CC{\mathbb{C}}
\def\NN{\mathbb{N}}
\newcommand{\al}{{\alpha}}
\newcommand{\la}{{\lambda}}
\newcommand{\f}{{\varphi}}
\newcommand{\R}{{\mathbb  R}}
\newcommand{\te}{{\vartheta}}
\newcommand{\N}{{\mathbb  N}}
\newcommand{\C}{{\mathbb  C}}
\newcommand{\bA}{{\boldsymbol{A}}}
\newcommand{\bal}{{\boldsymbol{\alpha}}}
\newcommand{\dd}{{d}}
\newcommand{\fdot}{\,\cdot\,}
\def\Ddots{\mathinner{\mkern1mu\raise\p@
\vbox{\kern7\p@\hbox{.}}\mkern2mu
\raise4\p@\hbox{.}\mkern2mu\raise7\p@\hbox{.}\mkern1mu}}
\newcommand{\cH}{\mathcal{H}}
\newcommand{\G}{\Theta}
\newcommand{\cB}{\mathcal{B}}
\newcommand{\cA}{\mathcal{A}}
\newcommand{\eps}{\varepsilon}
\DeclareMathOperator{\tr}{tr}
\newcommand{\be}{\mathbf{e}}
\newcommand{\cD}{\mathcal{D}}
\newcommand{\cG}{\mathcal{G}}
\newcommand{\fD}{\mathfrak{D}}
\newcommand{\fh}{\mathfrak{H}}
\newcommand{\fK}{\mathfrak{K}}
\newcommand{\cI}{\mathcal{I}}
\newcommand{\ft}{\mathfrak{t}}
\newcommand{\ciG}{\ci\Theta}
\newcommand{\bB}{\mathbf{B}}
\DeclareMathOperator{\Ker}{Ker}
\DeclareMathOperator{\Mul}{mul}
\DeclareMathOperator{\dom}{dom}
\DeclareMathOperator{\Ran}{ran}
\DeclareMathOperator{\spa}{span}
\DeclareMathOperator{\clos}{clos}
\newcommand{\ci}[1]{_{ {}_{\scriptstyle #1}}}
\newcommand{\ti}[1]{_{\scriptstyle \text{\rm #1}}}
\chardef\mathlig@atcode\count255
\def\actively#1#2{\begingroup\uccode`\~=`#2\relax\uppercase{\endgroup#1~}}
\def\mathlig@gobble{\afterassignment\mathlig@next@cmd\let\mathlig@next= }
\def\mathlig@delim{\mathlig@delim}
\def\mathlig@defcs#1{\expandafter\def\csname#1\endcsname}
\def\mathlig@let@cs#1#2{\expandafter\let\expandafter#1\csname#2\endcsname}
\def\mathlig@appendcs#1#2{\expandafter\edef\csname#1\endcsname{\csname#1\endcsname#2}}
\def\mathlig#1#2{\mathlig@checklig#1\mathlig@end\mathlig@defcs{mathlig@back@#1}{#2}\ignorespaces}
\def\mathlig@checklig#1#2\mathlig@end{%
 \expandafter\ifx\csname mathlig@forw@#1\endcsname\relax
 \expandafter\mathchardef\csname mathlig@back@#1\endcsname=\mathcode`#1%
 \mathcode`#1"8000\actively\def#1{\csname mathlig@look@#1\endcsname}%
 \mathlig@dolig#1\mathlig@delim
\fi
\mathlig@checksuffix#1#2\mathlig@end
}
\def\mathlig@checksuffix#1#2\mathlig@end{%
\ifx\mathlig@delim#2\mathlig@delim\relax\else\mathlig@checksuffix@{#1}#2\mathlig@end\fi
}
\def\mathlig@checksuffix@#1#2#3\mathlig@end{%
\expandafter\ifx\csname mathlig@forw@#1#2\endcsname\relax\mathlig@dosuffix{#1}{#2}\fi
\mathlig@checksuffix{#1#2}#3\mathlig@end
}
\def\mathlig@dosuffix#1#2{%
\mathlig@appendcs{mathlig@toks@#1}{#2}%
\mathlig@dolig{#1}{#2}\mathlig@delim
}
\def\mathlig@dolig#1#2\mathlig@delim{%
 \mathlig@defcs{mathlig@look@#1#2}{%
 \mathlig@let@cs\mathlig@next{mathlig@forw@#1#2}\futurelet\mathlig@next@tok\mathlig@next}%
 \mathlig@defcs{mathlig@forw@#1#2}{%
  \mathlig@let@cs\mathlig@next{mathlig@back@#1#2}%
  \mathlig@let@cs\checker{mathlig@chck@#1#2}%
  \mathlig@let@cs\mathligtoks{mathlig@toks@#1#2}%
  \expandafter\ifx\expandafter\mathlig@delim\mathligtoks\mathlig@delim\relax\else
  \expandafter\checker\mathligtoks\mathlig@delim\fi
  \mathlig@next
 }%
 \mathlig@defcs{mathlig@toks@#1#2}{}%
 \mathlig@defcs{mathlig@chck@#1#2}##1##2\mathlig@delim{%
  \ifx\mathlig@next@tok##1%
   \mathlig@let@cs\mathlig@next@cmd{mathlig@look@#1#2##1}\let\mathlig@next\mathlig@gobble
  \fi
  \ifx\mathlig@delim##2\mathlig@delim\relax\else
   \csname mathlig@chck@#1#2\endcsname##2\mathlig@delim
  \fi
 }%
%
 \ifx\mathlig@delim#2\mathlig@delim\else
  \mathlig@defcs{mathlig@back@#1#2}{\csname mathlig@back@#1\endcsname #2}%
 \fi
}%
\mathchardef\ordinarycolon\mathcode`\:
\def\vcentcolon{\mathrel{\mathop\ordinarycolon}}
\theoremstyle{oupplain}
\newtheorem{theo}{Theorem}[section]
\newtheorem*{theorem*}{Theorem}
\newtheorem{lem}[theo]{Lemma}
\newtheorem{cor}[theo]{Corollary}
\newtheorem{prop}[theo]{Proposition}
\theoremstyle{oupdefinition}
\newtheorem{defn}{Definition}[section]
\theoremstyle{oupremark}
\newtheorem{Remark}[theo]{Remark}
\newtheorem{ex}[theo]{Example}
\theoremstyle{oupproof}
\newtheorem{proof}{Proof}
\numberwithin{equation}{section}
\begin{document}

\begin{Frontmatter}

\title[Singular Boundary Conditions via Perturbation Theory]{Singular Boundary Conditions for Sturm--Liouville Operators via Perturbation Theory\thanks{M.~Bush and C.~Liaw were partially supported by the National Science Foundation grant  DMS-1802682. \\
D.~Frymark was partially supported by the Swedish Foundation for Strategic Research under grant AM13-0011. \\
Since August 2020, C.~Liaw has been serving as a Program Director in the Division of Mathematical Sciences at the National Science Foundation (NSF), USA, and as a component of this position, she received support from NSF for research, which included work on this paper. Any opinions, findings, and conclusions or recommendations expressed in this material are those of the authors and do not necessarily reflect the views of the NSF.}}

\author[2]{Michael Bush}

\author[1]{Dale~Frymark}

\author[2,3]{Constanze~Liaw}

\authormark{M.~Bush, D.~Frymark and C.~Liaw}

\address[2]{\orgdiv{Department of Mathematical Sciences}, \orgname{University of Delaware}, \orgaddress{501 Ewing Hall, \city{Newark} DE 19716, \country{USA}.}
\email{mikebush@udel.edu}, \email{liaw@udel.edu}}

\address[1]{\orgdiv{Department of Theoretical Physics} \orgname{Nuclear Physics Institute, Czech Academy of Sciences}, \orgaddress{\city{Řež} 25068, \country{Czech Republic}.}
\email{frymark@ujf.cas.cz}}

\address[3]{\orgdiv{CASPER}, \orgname{Baylor University}, \orgaddress{One Bear Place \#97328, \city{Waco} TX 76798, \country{USA}.}
\email{liaw@udel.edu}}

\keywords[AMS subject classification]{47A55, 34D15, 34B20, 34B24, 34L10}

\keywords{Self-Adjoint Perturbation, Sturm--Liouville, Self-Adjoint Extension, Spectral Theory, Boundary Triple, Boundary Pair, Singular Boundary Conditions, Singular Perturbation.}

\abstract{
We show that all self-adjoint extensions of semi-bounded Sturm--Liouville operators with limit-circle endpoint(s) can be obtained via an additive singular form bounded self-adjoint perturbation of rank equal to the deficiency indices, say $d\in\{1,2\}$. This characterization generalizes the well-known analog for semi-bounded Sturm--Liouville operators with regular endpoints. Explicitly, every self-adjoint extension of the minimal operator can be written as 
\begin{align*}
    \boldsymbol{A}\ci\Theta=\boldsymbol{A}_0+{\bf B}\Theta{\bf B}^*,
\end{align*}
where $\boldsymbol{A}_0$ is a distinguished self-adjoint extension and $\Theta$ a self-adjoint linear relation in $\CC^d$. The perturbation is singular in the sense that it does not belong to the underlying Hilbert space but is form bounded with respect to $\boldsymbol{A}_0$, i.e.~it belongs to $\cH_{-1}(\boldsymbol{A}_0)$, with possible `infinite coupling.' A boundary triple and compatible boundary pair for the symmetric operator are constructed to ensure the perturbation is well-defined and self-adjoint extensions are in a one-to-one correspondence with self-adjoint relations $\Theta$.

The merging of boundary triples with perturbation theory provides a more holistic view of the operator's matrix-valued spectral measures: identifying not just the location of the spectrum, but also certain directional information.

As an example, self-adjoint extensions of the classical Jacobi differential equation (which has two limit-circle endpoints) are obtained and their spectra are analyzed with tools both from the theory of boundary triples and perturbation theory. 
}

\end{Frontmatter}

\section{Introduction}

Perturbation theory and Sturm--Liouville operators have a rich history of interdependence; perturbation theoretic ideas are often tested on these well-known ordinary differential operators, and advances in special classes of operators (such as one-dimensional Schr\"odinger operators with various potentials) often require new perturbation results. In particular, the work of N.~Aronszajn and W.~Donoghue, responsible for well-known results on the spectra of rank-one self-adjoint perturbation problems, were motivated by applications to Sturm--Liouville differential equations \cite{Aron, Dono}. 

However, the current formulation of most perturbation results makes them only suitable for a select class of Sturm--Liouville operators---those for which any limit-circle endpoints are regular---despite the fact that many classical operators do not fall into this category. The current manuscript aims to expand the classical setup in which all self-adjoint extensions of a regular Sturm--Liouville operator are represented by an additive perturbation, to the case where the Sturm--Liouville operator can have more singular limit-circle endpoints. This leads to many new possibilities for the spectral analysis of such operators, as will be explored later. 

In order to give a frame of reference, we briefly present a slight variation of the well-known regular endpoint setup employed in \cite[Section~11.6]{S} that yields a rank-one perturbation problem.

\begin{ex} 
Let $\ell[\cdot]$ be a Sturm--Liouville operator defined by equation \eqref{d-sturmop} on $L^2[(a,b),w(x)]$ with the endpoint $a$ regular and the endpoint $b$ in the limit-point case. For simplicity, we will assume that the symmetric operator $\{\ell,\cD\ti{min}\}$ is positive for the moment. Note that any second-order differential equation with real coefficients on $[a,b]$ that are sufficiently differentiable can be expressed as a Sturm--Liouville operator \cite[Section~1.5]{N}. Such an operator requires a single boundary condition at $x=a$ which can be written, for $f\in L^2[(a,b),w(x)]$ and $0\leq\te<\pi$, as
\begin{align}\label{e-regbc}
    f(a)\cos(\te)+(pf')(a)\sin(\te)=0.
\end{align}

Hence, $\te=0$ corresponds to the Dirichlet boundary condition and $\te=\pi/2$ to the Neumann boundary condition. Let $\cD\ti{max}$ denote the maximal domain associated with $\ell$, those functions in $L^2[(a,b),w(x)]$ that $\ell$ maps back into $L^2[(a,b),w(x)]$ (see Definition~\ref{d-max}). Then, the operator $\ell_{\te}$ defined by acting via $\ell$ on the domain
\begin{align*}
    \cD_{\te}=\left\{f\in\cD\ti{max}~:~f\text{ obeys }\eqref{e-regbc}\right\},
\end{align*}
is easily seen to be self-adjoint for any fixed $\te$.

Let $f\in\cD_{\te}$ be real-valued. Then integration by parts yields

\begin{equation}\label{e-regibp}
\begin{aligned}
    \langle f,\ell_{\te}[f]\rangle&=\int_a^b f(x)\left\{\left[p(x)f'(x)\right]'+q(x)\right\} dx \\
    &=\int_a^b p(x)\left[f'(x)\right]^2 dx+\int_a^b q(x)\left[f(x)\right]^2+p(a)f(a)f'(a) \\
    &\equiv\tau(f,f)-\cot(\te)\left[f(a)\right]^2,
\end{aligned}
\end{equation}
where equation \eqref{e-regbc} and the fact that $x=b$ is in the limit-point case were used to simplify expressions and $\tau$ stands for the quadratic form defined by the first two summands in the middle line. Let $\boldsymbol{A}$ be the $\te=\pi/2$ Neumann boundary condition operator and $\delta_a$ be the Dirac delta function with singularity at $x=a$. Then
\begin{align}\label{d-aalpha}
    \boldsymbol{A}_{\al}=\boldsymbol{A}+\al\langle \cdot,\delta_a\rangle\delta_a,
\end{align}
corresponds to $\ell_{\te}=\boldsymbol{A}_{-\cot(\te)}$ via equation \eqref{e-regibp}. Since $x=a$ is a regular endpoint, the distribution $\delta_a$ gives rise to a well-defined linear functional acting on the maximal domain $\cD\ti{max}$ thanks to Sobolev embedding \cite[Section 11.6]{S}. However, $\delta_a$ is a distribution and clearly does not belong to $\cH=L^2[(a,b),w(x)]$. We conclude that the perturbation in \eqref{d-aalpha} is singular form bounded in the sense of \cite{AK}, i.e.,~$\delta_a\in\cH_{-1}(\boldsymbol{A})\setminus \cH$. See Section~\ref{ss-pert} for a definition of the scale of Hilbert spaces $\cH_{s}(\boldsymbol{A})$ for $s\in \R$ and Remark \ref{r-pertforms} for an explanation of how the case of `infinite coupling' is handled. We direct readers interested in  very general $p(x)$ or $w(x)$ to \cite{KURASOV1}.$\hfill\bullet$
\end{ex}

One of the contributions of the current manuscript is to extend equation \eqref{d-aalpha} to semi-bounded Sturm--Liouville differential equations with general limit-circle endpoint(s), see Section \ref{s-setup}. In general, such endpoints are much more difficult to handle than regular ones due to the fact that functions in the maximal domain may blow-up near the endpoints. This results in more complicated boundary conditions to ensure self-adjointness, as typical Dirichlet and Neumann boundary conditions are not rigorously defined. Recent analysis of boundary conditions and the $m$-function in the limit-circle case, in general and for examples, can be found in \cite{EGNT, EK, GLN, GZ, GZ2, KL} and further historical results can be found in \cite{BH, BG, Ful, MZ, NZ, Titch}. A direct approach, adapted from the above setup for regular endpoints, is therefore not successful. Instead, a boundary triple \cite{Bruk, Calkin, DM1, DM2, GG, Koch} and a compatible boundary pair \cite{Arl, LS} is constructed for such an operator with limit-circle endpoint(s). This framework is an application of methods developed in the recent book \cite{BdS} and we invite the reader to read the rich history of contributions to these fields in its well-written introduction. A suitable amount of background on these objects for our task can be found for convenience in Subsection \ref{ss-bt}. Some knowledge of semi-bounded forms associated with semi-bounded self-adjoint operators is assumed, although a few facts can also be found in Subsection \ref{ss-bt} and their explicit link with the perturbation setup is described in Remark \ref{r-pertforms}. Further comments on general connections between the boundary triple and boundary pair framework and setting up perturbation problems are made in Subsection \ref{ss-generalpert}.

The boundary triple and boundary pair framework introduces one difference from the classical perturbation theory that should be commented upon. The adapted rank-two perturbation setup in Theorem \ref{t-twopert} does not rely on $2\times 2$ Hermitian matrices as a parameter, which we denote $\Theta$, but rather $2\times 2$ self-adjoint relations. This divergence from the classical theory arises naturally and we usually assume that we are in the special case where the linear relation is a matrix. The price to pay for this assumption is that we are unable to claim that some spectral results apply to all self-adjoint extensions. However, there is a great deal of information able to be discerned from these matrix-valued spectral measures that scalar-valued spectral measures do not capture. In particular, separated (or local) boundary conditions, which correspond to $\Theta$ being a diagonal matrix, have simple eigenvalues when there are two limit-circle endpoints \cite{Z}, but the richer context of connected (or non-local) boundary conditions can give rise to eigenvalues of multiplicity two\footnote{The Sturm-Liouville differential equation on a loop is included as an example where connected, or non-local, boundary conditions are required.}. This discrepancy is a main motivation for the investigation and our attempts to go deeper than classical results like \cite[Theorem 10.6.3]{Z}.

Boundary triples naturally define a self-adjoint extension with the domain being functions in the kernel of the map $\Gamma_0$. This self-adjoint extension is not the unperturbed operator $\bA_0$ in our setup though. Indeed, the domain of $\bA_0$ is transversal with the natural self-adjoint extension defined by the boundary triple. Fortunately, this difference is not an impediment to the analysis of the operators, as they are intimately related. More details on how the two methods start from ``opposite'' self-adjoint extensions can be found in Remark \ref{r-0vsinfty}. Comparing and contrasting the strengths of these two methods is one of the central objectives of the paper.

Another advantage of using boundary triples is that `infinite coupling' in the perturbation parameter is easily handled, and corresponds to the cases where $\Theta$ is a $2\times 2$ self-adjoint relation but not a Hermitian matrix. Such cases technically fall outside the definition of form-bounded perturbations but can be given rigorous meaning by appealing to the semi-bounded forms associated with the relevant self-adjoint extensions, see Remark \ref{r-pertforms} for more information.

The newly expanded perturbation setup is also utilized to gain new insight about the classical Jacobi differential equation in Section \ref{s-examples}, and builds upon some of the recent results in \cite{F}. Notably, the Weyl-$m$ function for important self-adjoint extensions was determined in \cite{F} and utilized as a starting point for the spectral analysis here. Our attention is focused on this operator for many reasons:  it is extremely well-studied \cite{BEZ, B, E, FL, I, K, Sz}, contains many interesting examples as special choices of its parameters and is limit-circle at both endpoints. It is also a change of variables away from the hypergeometric differential equation (a fact which is exploited heavily), and every second-order ordinary differential equation with at most three regular singular points can be transformed into the hypergeometric differential equation. Analysis of the Jacobi equation thus practically covers all semi-bounded Sturm--Liouville equations with two limit-circle endpoints. It should be noted that such Sturm--Liouville equations, including Jacobi, are guaranteed to have only discrete spectrum \cite{BEZ, Z}.

This leaves many opportunities for further investigation into the absolutely continuous spectrum of self-adjoint extensions, which may occur when a Sturm--Liouville operator has, e.g.~one limit-circle endpoint and one limit-point endpoint. 

In cases where the perturbation parameter $\Theta$ is a Hermitian $2\times 2$ matrix or a special linear relation, several spectral properties are analyzed. The matrix-valued weights of point masses for the unperturbed operator, multiplicity of eigenvalues and eigenvectors from the space $L^2(\mu^\Theta)$, where $\mu^\Theta$ is the matrix-valued spectral measure of the perturbed operator, are all determined explicitly. (Note that the definition of the space $L^2(\mu^\Theta)$ becomes clear through its inner product given in  equation \eqref{d-matrixL^2} below.) Most of these properties are obtained by looking at explicit Weyl-$m$ functions that are features of the boundary triple framework and spectral results from \cite{GT}. Theorem \ref{t:simple} refines some observations on multiplicity/degeneracy of eigenvalues from \cite{Z}. Perturbation results from \cite{LT_JST} and \cite{LT09} further assist with the analysis of the spectrum and allow for the tracing of eigenfunctions through the perturbation in Corollary \ref{t-eigentrace}.

The spectral analysis of a wide array of operators, including one-dimensional Schr\"odinger operators with $L^1\ti{loc}$ potentials, can benefit from our results. Largely unexplored applications include higher order operators.

\subsection{Roadmap of Results}\label{ss-roadmap}

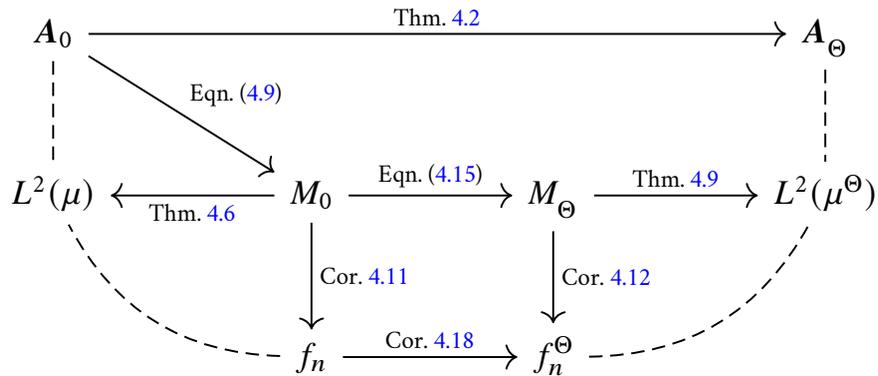
\begin{figure}[b]
\adjustbox{scale=1.3, center}{\begin{tikzcd}[row sep=large,column sep=huge]
  \bA_0 \arrow{dr}{\textrm{Eqn.~\eqref{e-weyl0}}} \arrow[d,dashed, no head] \arrow[rrr,"\textrm{Thm.~\ref{t-jacobipert}}"]
   & & & \bA\ci\G \arrow[d,dashed,no head] \\
   L^2(\mu)
   & M_0\arrow[l,"\textrm{Thm.~\ref{t:mulambdan}}"]\arrow{r}{\textrm{Eqn.~\eqref{e-MTheta}}}\arrow[d,"\textrm{Cor.~\ref{c:l2mu}}"] 
   & M\ci\G\arrow[r,"\textrm{Thm.~\ref{t:simple}}"]\arrow[d,"\textrm{Cor.~\ref{c:EVE2}}"]
   & L^2(\mu^\G) \\
   & f_n\arrow[ul,dashed,no head,bend left=30]\arrow[r,"\textrm{Cor.~\ref{t-eigentrace}}"]
   & f_n^{\G}\arrow[ur,dashed,no head,bend right=30]
   & 
   \end{tikzcd}}
   \caption{This diagram is not to be interpreted as a commuting diagram, but rather as follows: The dashed lines represent standard connections between an operator, the space of its spectral representation, and its eigenfunctions in their spectral representation. Equation \eqref{e-weyl0}, Theorems \ref{t:mulambdan}, \ref{t:simple}, as well as Corollaries \ref{c:l2mu}, \ref{c:EVE2} and \ref{t-eigentrace} start at required information and end at information provided by the indicated equation/theorem/corollary. Only Theorem \ref{t-jacobipert} and equation \eqref{e-MTheta} contain actual maps.}
   \label{figure1}
\end{figure}

Figure~\ref{figure1} informally summarizes several key results in this paper. While some of these results are obtained via boundary triples, other are proved using perturbation theory. It is this merging of theories that allows us to obtain such a complete picture of the situation. Clearly, some of the symbols are not yet defined. And so, we refer the reader to the respective theorems, equations and corollaries to find the exact meaning of the symbols. The solid arrows in this diagram do not necessarily represent explicit maps from the objects that they originate at to the object that they terminate at. Rather, they represent the flow of influence and information.

\subsection{Notation}\label{ss-notation}

Notation such as ${\bf L}\ti{max}=\{\ell,\cD\ti{max}\}$ is used to say that the operator ${\bf L}\ti{max}$ acts via $\ell$ on the domain $\cD\ti{max}$. Operators are often written in bold to help distinguish them from linear relations where there may be confusion.

The symbol $\bal$ is used to denote the rank-one perturbation parameter, while $\al$ and $\beta$ are reserved as parameters for the Jacobi differential operator in Section \ref{s-examples}.

Finally, the word ``singular'' is used in two ways in this paper: to signify when a perturbation vector is taken from outside of the Hilbert space natural to the problem (see Subsection \ref{ss-pert}) and when an endpoint of a Sturm--Liouville equation is in the limit-circle or limit-point case (see the end of Subsection \ref{ss-extensions}). With regards to endpoints, the limit-point case is not of interest, as it does not require a boundary condition.

Note that $\infty$ is used as a subscript in Sections \ref{s-setup} and \ref{s-examples} to refer to specific self-adjoint extensions that are related to infinite coupling from the perturbation theoretic viewpoint. The most specific information can be found in Remarks \ref{r-formordering} and \ref{r-pertforms}.

\section{Background}\label{s-bg}

Consider the classical Sturm--Liouville differential equation
\begin{align}\label{d-sturmdif}
\dfrac{d}{dx}\left[p(x)\dfrac{df}{dx}(x)\right]+q(x)f(x)=-\lambda w(x)f(x),
\end{align}
where $p(x),w(x)>0$ a.e.~on $(a,b)$ and $q(x)$ real-valued a.e.~on $(a,b)$, with $a,b\in\RR\cup\{\pm \infty\}$.
Furthermore, let $1/p(x),q(x),w(x)\in L^1\ti{loc}[(a,b),dx]$. Additional details about Sturm--Liouville theory can be found, e.g.~in \cite{AG, BEZ, E, GZ, Z}.
The differential expression can be viewed as a linear operator, mapping a function $f$ to the function $\ell[f]$ via
\begin{align}\label{d-sturmop}
\ell[f](x):=-\dfrac{1}{w(x)}\left(\dfrac{d}{dx}\left[p(x)\dfrac{df}{dx}(x)\right]+q(x)f(x)\right).
\end{align}
This unbounded operator acts on the Hilbert space $L^2[(a,b),w]$, endowed with the inner product 
$
\langle f,g\rangle:=\int_a^b f(x)\overline{g(x)}w(x)dx.
$
In this setting, the eigenvalue problem $\ell[f](x)=\lambda f(x)$ can be considered. However, the operator acting via $\ell[\fdot]$ on $L^2[(a,b),w]$ is in general not self-adjoint a priori; additional boundary conditions may be  required to ensure this property. 

We assume throughout the manuscript that both $a$ and $b$ are non-oscillatory, see e.g.~\cite{Z} for more.

\subsection{Extension Theory}\label{ss-extensions}

We begin by presenting the classical self-adjoint extension theory for symmetric operators as applied to ordinary differential operators. The classical references \cite{AG,N} should be consulted for further details. Let $\ell$ be a Sturm--Liouville differential expression. It is important to reiterate that the analysis of self-adjoint extensions does not involve changing the differential expression associated with the operator at all, merely the domain of definition by applying boundary conditions.

\begin{defn}[{\cite[Section 17.2]{N}}]\label{d-max}
The {\bf maximal domain} of $\ell[\fdot]$ is given by 
\begin{align*}
\cD\ti{max}=\cD\ti{max}(\ell):=\left\{f:(a,b)\to\mathbb{C}~:~f,pf'\in\text{AC}\ti{loc}(a,b);
f,\ell[f]\in L^2[(a,b),w]\right\}.
\end{align*}
\end{defn}

The designation of ``maximal'' is appropriate in this case because $\cD\ti{max}(\ell)$ is the largest possible subset of functions from the Hilbert space on which the application of $\ell$ makes sense and that is mapped back into the Hilbert space by this application. For $f,g\in\cD\ti{max}(\ell)$ and $a<a_0\le b_0<b$ the {\bf sesquilinear form} associated with $\ell$ is given by 
\begin{equation}\label{e-greens}
[f,g](b_0)-[f,g](a_0):=\int_{a_0}^{b_0}\left\{\ell[f(x)]\overline{g(x)}-\ell[\overline{g(x)}]f(x)\right\}w(x)dx.
\end{equation}
Each of these values on the left-hand side can be seen to exist individually (and not just as a difference) by taking limits. 
\begin{theo}[{\cite[Section 17.2]{N}}]\label{t-limits}
The limits $[f,g](b):=\lim_{x\to b^-}[f,g](x)$ and $[f,g](a):=\lim_{x\to a^+}[f,g](x)$ exist and are finite for $f,g\in\cD\ti{max}(\ell)$.
\end{theo}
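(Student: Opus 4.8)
The plan is to establish the existence of the limit $[f,g](b) = \lim_{x\to b^-}[f,g](x)$; the argument at the endpoint $a$ is entirely symmetric. The key observation is that the right-hand side of \eqref{e-greens}, namely $\int_{a_0}^{b_0}\left\{\ell[f]\overline{g} - \ell[\overline{g}]f\right\}w\,dx$, is an integral of an $L^1$ function over a subinterval of $(a,b)$: since $f,g\in\cD\ti{max}(\ell)$ we have $f,g,\ell[f],\ell[\overline g]\in L^2[(a,b),w]$, so by Cauchy--Schwarz the integrand $\left(\ell[f]\overline g - \ell[\overline g] f\right)w$ is integrable on all of $(a,b)$. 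Therefore $\lim_{b_0\to b^-}\int_{a_0}^{b_0}\left\{\ell[f]\overline g - \ell[\overline g]f\right\}w\,dx$ exists and is finite by dominated convergence (or simply absolute continuity of the Lebesgue integral). Fixing $a_0$ once and for all, this shows $\lim_{x\to b^-}\bigl([f,g](x) - [f,g](a_0)\bigr)$ exists and is finite, hence $[f,g](b) := \lim_{x\to b^-}[f,g](x)$ exists and is finite.

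First I would make precise the pointwise meaning of $[f,g](x)$: for $f,g\in\cD\ti{max}(\ell)$, integration by parts in \eqref{e-greens} (using that $f,pf',g,pg'$ are locally absolutely continuous) identifies $[f,g](x)$ with the boundary expression $\bigl(p(f\overline{g}' - f'\overline g)\bigr)(x)$, up to the additive constant fixed by the choice of base point; I would record this classical Lagrange-identity computation, as it is what makes each side of \eqref{e-greens} an honest function of $x$ rather than only the difference being defined. Then I would invoke the integrability of $\left(\ell[f]\overline g - \ell[\overline g]f\right)w$ on $(a,b)$ as above. The second step is the limit argument: absolute continuity of the integral gives that $b_0\mapsto \int_{a_0}^{b_0}(\cdots)w\,dx$ extends continuously to $b_0=b$, which is exactly the existence of the finite limit. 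Finally I would note the symmetric statement at $a$, obtained by fixing $b_0$ instead and letting $a_0\to a^+$.

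The only genuine subtlety — and the step I expect to require the most care — is justifying that the left-hand side of \eqref{e-greens} \emph{individually} has limits, not merely that the difference $[f,g](b_0)-[f,g](a_0)$ converges. This is handled precisely by fixing one endpoint of integration (say $a_0$) as a constant, so that $[f,g](b_0)$ differs from an integral with fixed lower limit by the constant $[f,g](a_0)$; there is no circularity because $[f,g](a_0)$ is a well-defined number (the boundary expression evaluated at an interior point $a_0$, where $f, pf'$ are genuinely defined and finite). Everything else reduces to Cauchy--Schwarz and the standard fact that indefinite integrals of $L^1$ functions are absolutely continuous, so no deeper input is needed; this is why the result can be attributed to \cite[Section~17.2]{N} as a foundational lemma.
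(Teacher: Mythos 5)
Your proof is correct and is essentially the standard argument the paper is invoking by its citation to Naimark: the Lagrange identity makes $[f,g](x)$ a genuine pointwise function at interior points, Cauchy--Schwarz shows the integrand in \eqref{e-greens} is in $L^1(a,b)$ since $f,g,\ell[f],\ell[g]\in L^2[(a,b),w]$, and fixing one endpoint of integration then yields the existence of each one-sided limit separately. You correctly isolate the only delicate point (why each side of \eqref{e-greens} individually converges rather than just the difference), and your resolution is the right one.
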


The equation \eqref{e-greens} is {\bf Green's formula} for $\ell[\fdot]$, and in the case of Sturm--Liouville operators it can be explicitly computed using integration by parts to be the modified Wronskian
\begin{align}\label{e-mwronskian}
[f,g]\bigg|_a^b:=p(x)[f'(x)g(x)-f(x)g'(x)]\bigg|_a^b.
\end{align}

\begin{defn}[{\cite[Section 17.2]{N}}]\label{d-min}
The {\bf minimal domain} of $\ell[\fdot]$ is given by
\begin{align*}
\cD\ti{min}=\cD\ti{min}(\ell):=\left\{f\in\cD\ti{max}(\ell)~:~[f,g]\big|_a^b=0~~\forall g\in\cD\ti{max}(\ell)\right\}.
\end{align*}
\end{defn}

The maximal and minimal operators associated with the expression $\ell[\fdot]$ are then defined as ${\bf L}\ti{min}=\{\ell,\cD\ti{min}\}$ and ${\bf L}\ti{max}=\{\ell,\cD\ti{max}\}$ respectively. By {\cite[Section 17.2]{N}}, these operators are adjoints of one another, i.e.~$({\bf L}\ti{min})^*={\bf L}\ti{max}$ and $({\bf L}\ti{max})^*={\bf L}\ti{min}$. The operator ${\bf L}\ti{min}$ is thus symmetric.

Note that the self-adjoint extensions of a symmetric operator coincide with those of the closure of the symmetric operator {\cite[Theorem XII.4.8]{DS}}, so without loss of generality we assume that ${\bf L}\ti{min}$ is closed.

\begin{defn}[variation of {\cite[Section 14.2]{N}}]\label{d-defect}
Define the {\bf positive defect space} and the {\bf negative defect space}, respectively, by
$$\cD_+:=\left\{f\in\cD\ti{max}~:~{\bf L}\ti{max}f=if\right\}
\qquad\text{and}\qquad
\cD_-:=\left\{f\in\cD\ti{max}~:~{\bf L}\ti{max}f=-if\right\}.$$
\end{defn}

The dimensions dim$(\cD_+)=m_+$ and dim$(\cD_-)=m_-$, called the {\bf positive} and {\bf negative deficiency indices of ${\bf L}\ti{min}$} respectively, will play an important role. They are usually conveyed as the pair $(m_+,m_-)$. The symmetric operator ${\bf L}\ti{min}$ has self-adjoint extensions if and only if its deficiency indices are equal {\cite[Section 14.8.8]{N}}.

The classical {\bf von Neumann formula} then says that 
\begin{equation}\label{e-vN}
\cD\ti{max}=\cD\ti{min}\dotplus\cD_+\dotplus\cD_-.\nonumber
\end{equation}
The decomposition can be made into an orthogonal direct sum by using the graph norm, see \cite{FFL}.

If the operator ${\bf L}\ti{min}$ has any self-adjoint extensions, then the deficiency indices of ${\bf L}\ti{min}$ have the form $(m,m)$, where $0\leq m\leq 2$ {\cite[Section 14.8.8]{N}}.
Hence, Sturm--Liouville expressions that generate self-adjoint operators must have deficiency indices $(0,0)$, $(1,1)$ or $(2,2)$. If a differential expression is either in the limit-circle case or regular at the endpoint $a$, it requires a boundary condition at $a$. If it is in the limit-point case at the endpoint $a$, it does not require a boundary condition. The analogous statements are true at the endpoint $b$.

Sturm--Liouville differential expressions are rather well-understood, see e.g.~\cite{BEZ,E} for encyclopedic references, so the deficiency indices are well-known in almost all cases of interest. Representative examples are the Jacobi operator, with deficiency indices $(2,2)$, the Laguerre operator, with indices $(1,1)$, and the Hermite operator, with indices $(0,0)$. The sources \cite{BEZ, GZ, NZ, W, Z} can also be consulted for more information on the classification of endpoints.

\subsection{Boundary Triples and Pairs}\label{ss-bt}

Boundary triples and pairs provide additional ways to tackle boundary value problems, and are intimately connected. Most of the material from this subsection is taken from the excellent open-source book \cite{BdS}, which should be consulted for more details and the history of contributions. The presentation here is a bit simpler, as it is formulated for operators instead of the more general linear relations.

\begin{defn}{\cite[Definition 2.1.1]{BdS}}\label{d-bt}
Let $S$ be a closed symmetric operator in a Hilbert space $\cH$. Then $\{\cG,\Gamma_0,\Gamma_1\}$ is a \textbf{boundary triple} for $S^*$ if $\cG$ is a Hilbert space and $\Gamma_0,\Gamma_1:\dom S^*\to\cG$ are surjective linear maps and the Green's identity 
\begin{align}\label{e-btgreens}
    \langle S^*f,g\rangle_{\cH}-\langle f,S^*g\rangle_{\cH}=\langle\Gamma_1f,\Gamma_0g\rangle_{\cG}-\langle\Gamma_0f,\Gamma_1g\rangle_{\cG},
\end{align}
holds for all $f,g\in \dom S^*$.
\end{defn}

Notice that when $S$ is a Sturm--Liouville differential operator the left-hand side of equation \eqref{e-btgreens} is just the sesquilinear form given in equation \eqref{e-greens} when $a_0=a$ and $b_0=b$. 

\begin{defn}{\cite[Definition 1.4.9]{BdS}}
Let $S$ be a symmetric operator in $\cH$ and let $\la\in\CC$. The space
\begin{align*}
    \mathfrak{N}_{\la}(S^*):=\ker(S^*-\la),
\end{align*}
is called the {\bf defect subspace} of $S$ at the point $\la\in\CC$.
\end{defn}

The usual positive and negative defect spaces from Definition \ref{d-defect} are then simply $\mathfrak{N}_i({\bf L}\ti{max})$ and $\mathfrak{N}_{-i}({\bf L}\ti{max})$, respectively. It should be clear that boundary triples are usually not unique. Indeed, given a self-adjoint extension $H$ of a closed symmetric operator $S$, the adjoint $S^*$ can be decomposed into a direct sum of $\dom(H)$ and a defect space. This allows maps $\Gamma_0$ and $\Gamma_1$ to be defined so that $\dom(H)=\ker(\Gamma_0)$ and $\{\cG,\Gamma_0,\Gamma_1\}$ form a boundary triple, see \cite[Theorem 2.4.1]{BdS}. The structure also allows the definition of a Weyl $m$-function, an important object for spectral theory.

\begin{defn}{\cite[Definition 2.3.1, 2.3.4]{BdS}}\label{d-mfunction}
Let $S$ be a closed symmetric operator in a complex Hilbert space $\cH$, let $\{\cG,\Gamma_0,\Gamma_1\}$ be a boundary triple for $S^*$, and let $A_0=\ker \Gamma_0$. Then
\begin{align*}
    \rho(A_0)\ni\la\mapsto M(\la)=\Gamma_1\left(\Gamma_0\upharpoonright\mathfrak{N}_{\la}(S^*)\right)^{-1},
\end{align*}
where $\upharpoonright$ denotes the restriction, is called the \textbf{Weyl $m$-function} associated with the boundary triple $\{\cG,\Gamma_0,\Gamma_1\}$.
\end{defn}

Another way that boundary triples are useful is that they provide a description of all self-adjoint extensions of closed symmetric operators with finite deficiency indices in terms of linear relations.

\begin{defn}{\cite[Section 1.1]{BdS}}
Let $\fh$ and $\fK$ be Hilbert spaces over $\CC$. A linear subspace of $\fh\times\fK$ is called a {\bf linear relation} $H$ from $\fh$ to $\fK$ and the elements $\widehat{h}\in H$ will in general be written as pairs $\{h,h'\}$ with components $h\in\fh$ and $h'\in\fK$. If $\fh=\fK$ then we will just say $H$ is a linear relation in $\fh$. \end{defn}

Let $\{\CC^{m},\Gamma_0,\Gamma_1\}$ be a boundary triple for a symmetric minimal operator $\boldsymbol{A}\ti{min}$, defined on $\cD\ti{min}$, with deficiency indices $(m,m)$. Then self-adjoint extensions $\boldsymbol{A}\ci{\Theta}\subset \boldsymbol{A}\ti{max}$ are in one-to-one correspondence with the self-adjoint relations $\Theta$ in $\CC^{m}$ via
\begin{align*}
    \dom \left(\boldsymbol{A}\ci{\Theta}\right)=\left\{f\in\cD\ti{max}~:~\{\Gamma_0 f,\Gamma_1 f\}\in\Theta\right\}.
\end{align*}
The self-adjoint linear relation can be made more practical using a decomposition such as equation \eqref{e-relationdecomp}. In the special case that $\te$ is an $m\times m$ matrix, the boundary condition for the domain of $\boldsymbol{A}\ci{\Theta}$ can be written as
\begin{align}
    \dom \left(\boldsymbol{A}\ci{\Theta}\right)=\left\{ f\in\cD\ti{max} ~:~ \Theta\Gamma_0 f=\Gamma_1 f\right\}.\nonumber
\end{align}

Let $S$ be a semi-bounded self-adjoint operator with lower bound $m(S)$. Recall that there is a natural way to identify $S$ with a closed semi-bounded form $\ft$ in $\cH$ with lower bound $m(\ft)=m(S)$ via the First and Second Representation Theorems, see \cite[Theorem 5.1.18]{BdS} and \cite[Theorem 5.1.23]{BdS}. Let $\f\in\dom (S)$, $\psi\in\dom(\ft)$ and $\gamma<m(S)$, where
\begin{align*}
    \dom(\ft_S)=\dom(S-\gamma)^{1/2}, \\
    \ft_S[\f,\psi]=\langle (S-\gamma)\f,\psi\rangle+\gamma\langle\f,\psi\rangle.
\end{align*}
Equivalently, if $\f,\psi\in\dom(\ft_S)$, then
\begin{align*}
    \ft_S[\f,\psi]=\langle (S-\gamma)^{1/2}\f,(S-\gamma)^{1/2}\psi\rangle +\gamma\langle\f,\psi\rangle.
\end{align*}
The space $\dom(\ft_S)$ endowed with the inner product
\begin{align*}
    \langle\f,\psi\rangle\ci{\ft_{S-\gamma}}:=\ft_S[\f,\psi]-\gamma\langle\f,\psi\rangle, \text{ for }\f,\psi\in\dom(\ft),
\end{align*}
is a Hilbert space, denoted $\cH\ci{\ft_{S-\gamma}}$.

More general semi-bounded symmetric operators $S$ (i.e.~the minimal operator of a Sturm--Liouville expression) also have a form associated with them via 
\begin{align}\label{e-semiformgen}
    \ft_S[f,g]=\langle Sf,g\rangle, \text{ for }f,g\in\dom(S).
\end{align}
The semi-bounded self-adjoint operator $S_F$ associated with the closure of the form $\ft_S$ in equation \eqref{e-semiformgen} is called the {\bf Friedrichs extension} of $S$, see \cite[Definition 5.3.2]{BdS}.

\begin{defn}{\cite[Definition 1.7.6]{BdS}}\label{d-transversal}
Let $S$ be a closed symmetric operator in $\cH$. If $H$ and $K$ are closed extensions such that $\dom(S)\subset\dom(H),\dom(K)\subset\dom(S^*)$, then they are called {\bf disjoint} if $\dom(H)\cap \dom(K)=\dom(S)$ and {\bf transversal} if they are disjoint and $\dom(H)\cup\dom(K)=\dom(S^*)$.
\end{defn}

Finally, we are able to state the definition of boundary pair. These pairs will help access the semi-bounded closed forms associated with extensions when a boundary triple has already been constructed. 

\begin{defn}{\cite[Definition 5.6.1]{BdS}}
Let $S$ be a closed symmetric operator in $\cH$ and let $S_1$ be a semi-bounded self-adjoint extension of $S$ with lower bound $m(S_1)$ such that $S_1$ and the Friedrichs extension $S_F$ are transversal. Let $\ft\ci{S_1}$ be the closed form associated with $S_1$ and let 
\begin{align*}
    \cH\ci{\ft_{S_1-\gamma}}=\left(\dom\left(\ft\ci{S_1}\right),\langle\cdot,\cdot\rangle\ci{\ft_{S_1-\gamma}}\right), \quad\gamma<m(S_1),
\end{align*}
be the corresponding Hilbert space. A pair $\left\{\cG,\Lambda\right\}$ is called a {\bf boundary pair} for $S$ corresponding to $S_1$ if $\cG$ is a Hilbert space and $\Lambda\in{\bf B}\left(\cH\ci{\ft_{S_1-\gamma}},\cG\right)$ satisfies
\begin{align*}
    \ker(\Lambda)=\dom\left(\ft\ci{S_F}\right) ~\text{ and }~\text{ran}(\Lambda)=\cG.
\end{align*}
\end{defn}

In order to formulate when boundary triples and pairs are related, it is necessary to first define what it means to extend an operation in our context.

\begin{defn}{\cite[Definition 5.6.3]{BdS}}
Let $S$ be a closed semi-bounded operator in $\cH$ and $\left\{\cG,\Gamma_0,\Gamma_1\right\}$ be a boundary triple for $S^*$ with $A_0=\ker(\Gamma_0)$. Let $S_1$ be a semi-bounded self-adjoint extension of $S$ such that $S_1$ and $S_F$ are transversal, so that, $\dom(S^*)\subset\dom\left(\ft\ci{S_1}\right)$. Then an operator $\Lambda\in{\bf B}\left(\cH\ci{\ft_{S_1-\gamma}},\cG\right)$ is said to be an {\bf extension} of $\Gamma_0$ if 
\begin{align*}
    \Gamma_0f=\Lambda f ~\text{ for all }f\in\dom(S^*).
\end{align*}
\end{defn}

\begin{defn}{\cite[Definition 5.6.4]{BdS}}
Let $S$ be a closed semi-bounded operator in $\cH$ and let $\left\{\cG,\Gamma_0,\Gamma_1\right\}$ be a boundary triple for $S^*$ with 
\begin{align*}
    A_0=\ker(\Gamma_0) ~\text{ and }~ A_1=\ker(\Gamma_1).
\end{align*}
Let $S_1$ be a semi-bounded self-adjoint extension of $S$ such that $S_1$ and $S_F$ are transversal and let $\left\{\cG,\Lambda\right\}$ be a boundary pair for $S$ corresponding to $S_1$. Then $\left\{\cG,\Gamma_0,\Gamma_1\right\}$ and $\left\{\cG,\Lambda\right\}$ are said to be {\bf compatible} if $\Lambda$ is an extension of $\Gamma_0$ and the self-adjoint relations $A_1$ and $S_1$ coincide.
\end{defn}

Lemma 5.6.5 of \cite{BdS} adds that in the special case when the defect indices of $S$ are finite and $\Lambda\in{\bf B}\left(\cH\ci{\ft_{S_1-\gamma}},\cG\right)$ is an extension of $\Gamma_0$, then $A_0=\ker(\Gamma_0)=S_F$ and $\left\{\cG,\Lambda\right\}$ is a boundary pair for $S$ corresponding to $S_1$. If, in this case, also $A_1$ and $S_1$ coincide then $\left\{\cG,\Gamma_0,\Gamma_1\right\}$ and $\left\{\cG,\Lambda\right\}$ are compatible. 

As will be shown in the Section \ref{s-setup}, there is a natural way to create a boundary triple for Sturm--Liouville operators. A compatible boundary pair will then help set up a perturbation problem by accessing the form domain $\ft_{S_1-\gamma}$ and defining a bounded map $\Lambda$ that acts like $\Gamma_0$ when applied to functions from $\dom(S^*)$.

\subsection{Finite Rank Perturbation Theory}\label{ss-pert}

Let $\boldsymbol{A}$ and $\boldsymbol{T}$ be (possibly unbounded) self-adjoint operators on a separable Hilbert space $\cH$. In general perturbation theory, we are interested in the following question: If we know the properties of operator $\boldsymbol{A}$ well, what can we say about the formal operator $\boldsymbol{A}+\boldsymbol{T}$? In our application to Sturm--Liouville operators, both $\boldsymbol{A}$ and $\boldsymbol{T}$ will be unbounded, though $\boldsymbol{A}$ will be bounded from below and $\boldsymbol{T}$ will be of finite rank, i.e.~has finite dimensional range. 

On the side, we note that the theory of rank-one perturbations has been flourishing for several decades. Most prominent are the results by Aronszajn \cite{Aron} and Donoghue \cite{Dono} on the mututal singularity of singular parts, by Aleksandrov \cite{Aleksandrov} on averaging/disintegration of the spectral measures, and by Poltoratski \cite{NONTAN} on the existence of non-tangential boundary values almost everywhere with respect to the singular parts. Recently, progress has been made on the finer properties of the singular parts by one of the current authors and Treil, see \cite{LTSurvey} for a survey. An application of rank-one perturbations lead to the famous Simon--Wolff criterion \cite{SimonWolff} restricting the singular parts of the discrete random Schr\"odinger operator.

Returning to our agenda of rigorously defining finite rank perturbations, by the closed graph theorem, we know that the unbounded operators $\boldsymbol{A}$ and $\boldsymbol{T}$ are only defined on a dense subset of $\cH$. If the operators are completely unrelated, it can happen that the intersection of their domains is empty. In this case, $\boldsymbol{A}+\boldsymbol{T}$ would be meaningless. In our application to  Sturm--Liouville operators, it turns out that $\boldsymbol{T}$ is relatively bounded with respect to $\boldsymbol{A}$, so that the domain of $\boldsymbol{A}+\boldsymbol{T}$ equals that of $\boldsymbol{A}$.
And the fact that $\boldsymbol{T}$ is of finite rank, allows us to formulate this perturbation problem rather concretely. To do so, we introduce a scale of Hilbert spaces associated with $\boldsymbol{A}$. In the definition of these finite rank singular form bounded perturbations roughly follows that in \cite{AK}. See also \cite{S2,S}.

Consider the non-negative operator $|\boldsymbol{A}|=(\boldsymbol{A}^*\boldsymbol{A})^{1/2}$, whose domain coincides with the domain of $\boldsymbol{A}$. Alternatively, if $\boldsymbol{A}$ is bounded from below, the shifted operator $\boldsymbol{A}+kI$, $k\in\RR$ sufficiently large, will provide a non-negative operator. We introduce a scale of Hilbert spaces.

\begin{defn}[\hspace{-1pt}{\cite[Section 1.2.2]{AK}}]\label{d-standardscale}
For $s\geq 0$, define the space $\cH_s(A)$ to consist of $\f\in\cH$ for which the $s$-norm
\begin{align}\label{d-scalenorm}
\|\f\|_s:=\|(|\boldsymbol{A}|+I)^{s/2}\f\|_{\cH},
\end{align}
is bounded. 
The space $\cH_s(\boldsymbol{A})$ equipped with the norm $||\cdot||_s$ is complete. The adjoint spaces, formed by taking the linear bounded functionals on $\cH_s(\boldsymbol{A})$, are used to define these spaces for negative indices, i.e.~$\cH_{-s}(\boldsymbol{A}):=\cH_s^*(\boldsymbol{A})$. The corresponding norm in the space $\cH_ {-s}(\boldsymbol{A})$ is thus defined by \eqref{d-scalenorm} as well. 
The collection of these $\cH_s(\boldsymbol{A})$ spaces will be called the \emph{scale of Hilbert spaces associated with the self-adjoint operator $\boldsymbol{A}$}.
\end{defn}

It is not difficult to see that the spaces satisfy the nesting properties
\begin{align*}
\hdots\subset\cH_2(\boldsymbol{A})\subset\cH_1(\boldsymbol{A})\subset\cH=\cH_0(\boldsymbol{A})\subset\cH_{-1}(\boldsymbol{A})\subset\cH_{-2}(\boldsymbol{A})\subset\hdots,
\end{align*}
and that for every two $s,t$ with $s<t$, the space $\cH_t(\boldsymbol{A})$ is dense in $\cH_s(\boldsymbol{A})$ with respect to the norm $||\cdot||_s$. Indeed, the operator $(\boldsymbol{A}+1)^{t/2}$ defines an isometry from $\cH_s(\boldsymbol{A})$ to $\cH_{s-t}(\boldsymbol{A})$. For $\psi\in\cH_s(\boldsymbol{A})$, $\f\in\cH_{-s}(\boldsymbol{A})$, we define the duality pairing
\begin{align}
\langle\psi,\f\rangle_{s}:=\left \langle(|\boldsymbol{A}|+I)^{s/2}\psi,(|\boldsymbol{A}|+I)^{-s/2}\f\right \rangle_{\cH},\nonumber
\end{align}
where the brackets on the right hand side denote the inner product of $\cH$.

Throughout the literature of other fields similar constructions occur under different names. For instance, the pairing of $\cH_1(\boldsymbol{A})$, $\cH$, and $\cH_{-1}(\boldsymbol{A})$ is sometimes referred to as a \emph{Gelfand triple} or \emph{rigged Hilbert space}. Also, when $\bA$ is the derivative operator, these scales are simply Hilbert--Sobolev spaces. When $\boldsymbol{A}$ is a general  differential operator, they are closely related. More details about Hilbert scales can be found in \cite{KP}.

It is worth noting that these Hilbert scales are related to those generated by so-called left-definite theory \cite{LW02}. This theory employs powers of a semi-bounded self-adjoint differential operator to create a continuum of operators whereupon spectral properties can be studied. The theory can be applied to self-adjoint extensions of self-adjoint operators, which can be viewed as finite rank perturbations, see e.g.~\cite{FFL, FL} and the references therein.

Rank-one perturbations of a given operator $\bA$ arise most commonly when the vectors $\f$ are bounded linear functionals on the domain of the operator $\boldsymbol{A}$; so, many applications are focused on $\cH_{-2}(\boldsymbol{A})$. Here, we  discuss the case $\f\in\cH_{-1}(\boldsymbol{A})$ for the sake of simplicity, the so-called form bounded singular case. However, references usually contain information on extensions to $\f\in\cH_{-2}(\boldsymbol{A})$, and information on the case when $\f\notin\cH_{-2}(\boldsymbol{A})$ can be found in \cite{DKS, Kurasov}.

The formulation of finite rank perturbations also uses the Hilbert scale $\cH_s(\bA).$ Our applications, again, fall into the category of form bounded, i.e.~from $\cH_{-1}(\boldsymbol{A})$, finite rank perturbations\footnote{For some results on form unbounded $\cH_{-2}(\boldsymbol{A})$ finite rank perturbations the reader may refer to \cite{AK}.}. To define rank $n$ form bounded perturbations of a self-adjoint operator $\bA$ on a Hilbert space $\cH$, consider a coordinate mapping ${\bf B}:\CC^n\to\Ran({\bf B})\subset\cH_{-1}\left({\boldsymbol {A}}\right)$ that acts via multiplication by the row vector
\begin{align*}
    \begin{pmatrix}
    f_1, ~~\hdots, f_n\\
    \end{pmatrix}
    \qquad\text{with } f_1, \hdots, f_n\in \cH_{-1}\left({\boldsymbol {A}}\right).
\end{align*}
Formally, the mapping ${\bf B}^*:\Ran({\bf B})\to\CC^n$ acts by
\begin{align*}
    {\bf B}^*\fdot=
    \begin{pmatrix}
    \langle \fdot,f_1\rangle_1 \\
    \vdots\\
    \langle \fdot,f_n\rangle_1
    \end{pmatrix}.
\end{align*}
We say \emph{formally}, because the inner products occurring in ${\bf B}^*$ are not defined on all of $\Ran({\bf B})$. However, in accordance with the definition of $\cH_{-1}\left({\boldsymbol {A}}\right)$, they do make sense as a duality pairing on the quadratic form space of the unperturbed operator $\bA$, which is all we require. Abusing notation slightly, we use the same notation ${\bf B}^*$ for the operator restricted to this form domain.

The quadratic form sense now gives rigorous meaning to the finite rank form bounded singular perturbation
\begin{align}\label{d-athetapert}
    \bA\ci\Theta:=\bA+{\bf B}\Theta{\bf B}^*,\nonumber
\end{align}
where $\Theta:\C^n\to\C^n$ is an $n\times n$ matrix; in Remark~\ref{r-pertforms} we discuss the extension of this definition to the case when $\Theta$ is a linear relation.

\section{Formulating Singular Perturbations}\label{s-setup}

We apply the theory of boundary triples and boundary pair framework to set up a singular self-adjoint perturbation that can describe all self-adjoint extensions of a semi-bounded symmetric Sturm--Liouville operator with limit-circle endpoints. Afterwards, in Subsection \ref{ss-generalpert}, we discuss how this setup could in principle be expanded to a wider range of semi-bounded symmetric operators.

\subsection{One Limit-Circle Endpoint}\label{ss-onelc}

Let $\ell[\cdot]$ be a Sturm--Liouville operator as in Definition \ref{d-sturmop} on the Hilbert space $\cH=L^2[(a,b),w(x)]$ such that the minimal operator ${\bf L}\ti{min}$ is semi-bounded. Further assume that the endpoint $a$ is in the limit-circle case and the endpoint $b$ is in the limit-point case. Hence, one boundary condition must be imposed on the maximal domain in order to create a  domain for a self-adjoint extension of the minimal operator ${\bf L}\ti{min}$. In order to determine these extensions, it is necessary to first look at solutions.

\begin{defn}{\cite[Definition 6.10.3]{BdS}}\label{d-principal}
Let $({\bf L}\ti{max}-\lambda_0)f=0$ with $\lambda_0\in\RR$ be non-oscillatory at the endpoint $a$ and let $u$ and $v$ be real solutions of $({\bf L}\ti{max}-\lambda_0)f=0$. Then $u$ is said to be {\bf principal} at $a$ if $1/(pu^2)$ is not integrable at $a$ and $v$ is said to be {\bf non-principal} at $a$ if $1/(pv^2)$ is integrable at $a$, where $p$ comes from the definition of $\ell$ in equation \eqref{d-sturmop}. 
\end{defn}

Let $u_a$ and $v_a$ be such a principal and non-principal solution, respectively, corresponding to the expression $\ell$ and $\la_0\in\RR$ at the endpoint $a$. A principal solution exists---due to the endpoint $a$ being in the limit-circle case---and it is uniquely determined up to real multiples. Hence, the second linearly independent solution is automatically non-principal \cite[Corollary 6.10.5]{BdS}. We also assume that these solutions are normalized in the sense that $[u_a,v_a](a)=1$, where the sesquilinear form is defined via equation \eqref{e-mwronskian}. These solutions define two operations on functions from the maximal domain via their action through the sesquilinear form. For $f\in\cD\ti{max}$, let
\begin{equation}\label{e-quasiderivs}
\begin{aligned}
    f^{[0]}(a)&:=[f,u_a](a)=\lim_{x\to a^+}\dfrac{f(x)}{v_a(x)}, \\
    f^{[1]}(a)&:=[f,v_a](a)=\lim_{x\to a^+}\dfrac{f(x)-f^{[0]}(a)v_a(x)}{u_a(x)}.
\end{aligned}
\end{equation}
Note that the simplifications of the sesquilinear forms were only recently verified for limit-circle endpoints, see \cite[Theorem 4.5]{GLN}, and that the operations are well defined due to Theorem \ref{t-limits}.
A description of all possible self-adjoint extensions of ${\bf L}\ti{min}$ is readily available in the literature, see e.g.~\cite[Theorem 3.11(ii)]{GLN}, via a one-variable parameterization. The boundary condition must be of the form, for $0\leq\te<\pi$,
\begin{align}\label{e-theta}
    [f,u_a](a)\cos(\te)+[f,v_a](a)\sin(\te)=0.
\end{align}
The choice $\te=0$ corresponds to the important Friedrich's extension via \cite[Theorem 13]{MZ}, while the choice $\te=\pi/2$ is a Krein type extension under certain conditions, see \cite[Section 5.4]{BdS}. Define ${\bf L}_{\te}$ to be the operator that acts via $\ell$ on the domain
\begin{align}\label{e-thetadomain}
    \cD_{\te}:=\left\{f\in\cD\ti{max} ~:~ f \text{ obeys }\eqref{e-theta}\right\}.
\end{align}
The operations defined in equation \eqref{e-quasiderivs} naturally form a boundary triple.

\begin{prop}{\cite[Proposition 6.12.1]{BdS}}\label{p-btone}
Assume that the endpoint $a$ is in the limit-circle case and that the endpoint $b$ is in the limit-point case. Let $v_a$ be solution of the equation $({\bf L}\ti{max}-\la_0)f=0$, $\la_0\in\RR$, which is non-principal at $a$. Then $\{\CC,\Gamma_0,\Gamma_1\}$, where
\begin{align*}
    \Gamma_0f=f^{[0]}(a) ~\text{ and }~ \Gamma_1f=-f^{[1]}(a), \quad \text{for }f\in\cD\ti{max},
\end{align*}
is a boundary triple for $\cD\ti{max}$.
\end{prop}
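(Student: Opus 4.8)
The plan is to verify the two defining properties of a boundary triple from Definition~\ref{d-bt} for the pair of maps $\Gamma_0 f = f^{[0]}(a)$ and $\Gamma_1 f = -f^{[1]}(a)$: namely Green's identity \eqref{e-btgreens} and surjectivity of $(\Gamma_0,\Gamma_1)\colon \cD\ti{max}\to \CC\times\CC$ (here $S^* = {\bf L}\ti{max}$ since $({\bf L}\ti{min})^* = {\bf L}\ti{max}$). First I would recall that for $f,g\in\cD\ti{max}$ Green's formula gives $\langle {\bf L}\ti{max} f, g\rangle - \langle f, {\bf L}\ti{max} g\rangle = [f,g]\big|_a^b$, and that the endpoint $b$ being in the limit-point case forces $[f,g](b)=0$ for all $f,g\in\cD\ti{max}$ (this is exactly the reason no boundary condition is needed at $b$). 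So the entire right-hand side reduces to $-[f,g](a)$, and the task becomes showing $[f,g](a) = f^{[0]}(a)\overline{g^{[1]}(a)} - f^{[1]}(a)\overline{g^{[0]}(a)}$, or equivalently (after the sign from $\Gamma_1 = -f^{[1]}$) that $-[f,g](a) = \langle \Gamma_1 f,\Gamma_0 g\rangle - \langle \Gamma_0 f,\Gamma_1 g\rangle = -f^{[1]}(a)\overline{g^{[0]}(a)} + f^{[0]}(a)\overline{g^{[1]}(a)}$.

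The key computational step is the bilinear expansion of the sesquilinear form at $a$ in terms of the quasiderivatives. Using the identifications in \eqref{e-quasiderivs}, $f^{[0]}(a) = [f,u_a](a)$ and $f^{[1]}(a) = [f,v_a](a)$, together with the normalization $[u_a,v_a](a)=1$ (hence $[v_a,u_a](a)=-1$) and the skew-symmetry $[f,g] = -\overline{[g,f]}$ of the Wronskian-type form, I would write any $f\in\cD\ti{max}$ as behaving near $a$ like $f^{[0]}(a) v_a + f^{[1]}(a) u_a$ modulo a term that does not contribute to the boundary form (this is the content of the limits in \eqref{e-quasiderivs} and Theorem~\ref{t-limits}, as established in \cite[Theorem 4.5]{GLN}). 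Expanding $[f,g](a)$ bilinearly and using $[u_a,u_a](a) = [v_a,v_a](a) = 0$ and $[u_a,v_a](a)=1$ then yields precisely $[f,g](a) = f^{[1]}(a)\overline{g^{[0]}(a)} - f^{[0]}(a)\overline{g^{[1]}(a)}$, which after inserting the minus signs matches the required identity. Care with complex conjugates and the ordering of arguments in the non-Hermitian form is where sign errors are most likely, so I would pin down the convention $[f,g] = p(f'\bar g - f \bar g')$ from \eqref{e-mwronskian} at the outset.

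For surjectivity, I would argue that the defect space $\mathfrak{N}_{\la_0}({\bf L}\ti{max}) = \ker({\bf L}\ti{max}-\la_0)$ is two-dimensional (the endpoint $a$ being limit-circle contributes one dimension of square-integrable solutions near $a$, and limit-point at $b$ contributes one near $b$; intersected these give a genuine $2$-dimensional space of global $L^2$ solutions, consistent with deficiency indices being controlled endpoint-by-endpoint). Concretely, near $a$ the solutions $u_a$ and $v_a$ of $({\bf L}\ti{max}-\la_0)f=0$ span the solution space, and one can build elements of $\cD\ti{max}$ that near $a$ coincide with $u_a$ respectively $v_a$ (cut off appropriately away from $a$, remaining in $\cD\ti{max}$ because $b$ is limit-point). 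For such functions $(\Gamma_0,\Gamma_1)$ takes the values $(0,-1)$ and $(1,0)$ respectively by \eqref{e-quasiderivs} and the normalization, so the image of $(\Gamma_0,\Gamma_1)$ contains two linearly independent vectors in $\CC^2$, giving surjectivity. Alternatively, and more cleanly, I would invoke the abstract fact (see the discussion after Definition~\ref{d-mfunction} and \cite[Theorem 2.4.1]{BdS}) that once Green's identity holds and the maps have the right target dimension, surjectivity of the joint map is equivalent to the deficiency indices being $(1,1)$, which is given here. The main obstacle is really the first part: getting the bilinear boundary-form identity \eqref{e-btgreens} exactly right, including the role of the $\GLN$-type simplification \eqref{e-quasiderivs} of the Wronskian limits at a singular limit-circle endpoint and the bookkeeping of conjugates and signs — everything else is either standard limit-point/limit-circle theory or a dimension count.
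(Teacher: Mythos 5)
The paper offers no proof of this proposition --- it is quoted verbatim from \cite[Proposition 6.12.1]{BdS} --- so there is nothing internal to compare against; judged on its own, your argument is the standard proof of that cited result and is sound in structure. The two ingredients you isolate are exactly the right ones: (i) Green's identity reduces to a computation at $a$ because $[f,g](b)=0$ for all $f,g\in\cD\ti{max}$ at a limit-point endpoint, and the boundary term at $a$ is then handled by the Pl\"ucker-type expansion of the Wronskian form in terms of $[f,u_a]$, $[f,v_a]$ using $[u_a,u_a]=[v_a,v_a]=0$ and the normalization $[u_a,v_a](a)=1$ (this identity holds pointwise in $x$ and passes to the limit $x\to a^+$ by Theorem \ref{t-limits}, so you do not actually need the slightly loose ``$f$ behaves like $f^{[0]}v_a+f^{[1]}u_a$ modulo negligible terms'' phrasing); and (ii) joint surjectivity onto $\CC\times\CC$ follows from cutting off $u_a$ and $v_a$ near $a$, both of which lie in $L^2$ near $a$ by the limit-circle assumption and remain in $\cD\ti{max}$ after truncation. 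The one point you flag --- the bookkeeping of signs and conjugates --- is indeed the only delicate spot, and it cannot be fully resolved from this paper's stated conventions, which are internally inconsistent: equation \eqref{e-mwronskian} omits the conjugate and carries the opposite sign from what integration by parts in \eqref{e-greens} produces, and \eqref{e-quasiderivs} drops a minus sign relative to \cite[Lemma 6.10.10]{BdS}, a discrepancy the authors themselves acknowledge in the Jacobi section. So your final identity for $[f,g](a)$ is correct up to a convention-dependent overall sign; pinning the conventions down at the outset, as you propose, is exactly what is needed, and with the conventions of \cite{BdS} the computation closes. No genuine gap.
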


Let $u_b$ be a solution of $({\bf L}\ti{max}-\lambda_0)f=0$ which is principal at $b$, where $\la_0$ is the same real number used in the choice of $u_a$ and $v_a$ above. We then choose $a_0$ and $b_0$ such that $v_a$ does not vanish on $(a,a_0)$ and $u_b$ does not vanish on $(b_0,b)$ and consider $[c,d]\subset(a,b)$ with
\begin{align}\label{e-intervals}
    a<c<a_0<b_0<d<b.
\end{align}
Let $\f$ be a real solution of $({\bf L}\ti{max}-\la_0)f=0$ that is nonoscillatory at $a$ and does not vanish on $(a,a_0)$, such as $v_a$ above. We introduce the first-order differential operator that acts on functions $f\in AC(a,a_0)$ via
\begin{align*}
    N_{\f} f:=\sqrt{p}\f\left(\dfrac{f}{\f}\right)'.
\end{align*}
Note that when the potential $q=0$ in the definition of $\ell$, it may be convenient to choose $\lambda_0=0$ and $\f=1$ so that $N_{\f}f=\sqrt{p}f'$. Next, define the subset (closed under addition and scalar multiplication) of $\cH$
\begin{equation}\label{e-formdomainone}
    \begin{aligned}
    \fD:=\{f\in\cH~:~f\in AC(a,b), &\sqrt{p}f'\in L^2(c,d), \\
    &N_{v_a}f\in L^2(a,c), N_{u_b}f\in L^2(d,b)\}.
    \end{aligned} \nonumber
\end{equation}
By its definition, set $\fD$ contains those functions from the Hilbert space which have some regularity on the interior of the interval, and which are mapped `into $L^2$ near the endpoints' by the first-order operators $N_{v_a}$ and $N_{v_b}$. And by \cite[Lemma 6.12.3]{BdS} we have $\cD\ti{max}\subset\fD$.

For $f,g\in\fD$, define the form 
\begin{equation}\label{e-formone}
    \begin{aligned}
    \ft[f,g]:=\int_a^c &\left(N_{v_a}\right)f(x)\overline{\left(N_{v_a}g\right)(x)}dx+\int_d^b\left(N_{u_b}\right)f(x)\overline{\left(N_{u_b}g\right)(x)}dx \\
    &+\lambda_0\int_a^c f(x)\overline{g(x)}w(x)dx+\lambda_0\int_d^bf(x)\overline{g(x)}w(x)dx \\
    &+\int_c^d\left((\sqrt{p}f')(x)\overline{(\sqrt{p}f'(x)}+q(x)f(x)\overline{g(x)}\right)dx \\
    &+\dfrac{pv_a'(c)}{v_a(c)}f(c)\overline{g(c)}-\dfrac{pu_b'(d)}{u_b(d)}f(d)\overline{g(d)}.
    \end{aligned} \nonumber
\end{equation}
Observe that 
\begin{align*}
    \dfrac{pv_a'(c)}{v_a(c)}f(c)\overline{g(c)}\,,\dfrac{pu_b'(d)}{u_b(d)}f(d)\overline{g(d)}\in\RR,
\end{align*}
and the form $\ft$ is a densely defined closed semi-bounded form in $\cH$ by \cite[Corollary 6.12.2]{BdS}. Hence, there is a semi-bounded self-adjoint operator corresponding to $\ft$ via the First Representation Theorem \cite[Theorem 5.1.18]{BdS}, say ${\bf S}_1$. Since $\cD\ti{max}\subset\fD$, it follows that ${\bf S}_1$ is transversal with ${\bf L}_0$, the Friedrichs extension of ${\bf L}\ti{min}$ \cite[Theorem 5.3.8]{BdS}. We extend one of the mappings from the boundary triple in Proposition \ref{p-btone} to correspond to this larger domain.

Define $\Lambda:\fD\to\CC$ as
\begin{align*}
    \Lambda f:=\lim_{x\to a^+}\dfrac{f(x)}{v_a(x)}, ~~f\in\fD.
\end{align*}
The conditions that $f\in AC(a,a_0)$ and $N_{v_a}f\in L^2(a,c)$ from the definition of $\fD$ are enough to ensure that the operation $\Lambda$ is well-defined on its domain \cite[Theorem 6.10.9(i)]{BdS}.

\begin{lem}{\cite[Lemma 6.12.5]{BdS}}\label{bpone}
Let $v_a$ and $u_b$ be solutions of $({\bf L}\ti{max}-\la_0)f=0$, $\la_0=0$, which are non-principal at $a$ and principal at $b$, respectively. Assume that the endpoint $a$ is in the limit-circle case and the endpoint $b$ is in the limit-point case, and let $\{\CC,\Gamma_0,\Gamma_1\}$ be the boundary triple defined in Proposition \ref{p-btone}. Then $\{\CC,\Lambda\}$ is a boundary pair for ${\bf L}\ti{min}$ corresponding to ${\bf S}_1$ which is compatible with the boundary triple $\{\CC,\Gamma_0,\Gamma_1\}$. Moreover, one has
\begin{align*}
    \langle{\bf L}\ti{max}f,g\rangle=\langle\Gamma_1 f,\Lambda g\rangle_{\CC}+\ft[f,g], ~~f\in\cD\ti{max},~g\in\fD. 
\end{align*}
\end{lem}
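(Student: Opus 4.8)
The plan is to read off most of the assertion from the general framework recalled at the end of Subsection~\ref{ss-bt} --- in particular from \cite[Lemma~5.6.5]{BdS} --- leaving only one explicit integration by parts and one analytic estimate. Two of the hypotheses are already in place: ${\bf S}_1$ and the Friedrichs extension ${\bf L}_0$ of ${\bf L}\ti{min}$ are transversal (this was established when ${\bf S}_1$ was introduced), and $\cD\ti{max}\subset\fD=\dom(\ft)$ by \cite[Lemma~6.12.3]{BdS}. The first thing to check is that $\Lambda$ is an \emph{extension} of $\Gamma_0$: since $\Lambda$ is defined on $\fD\supset\cD\ti{max}$ and, by \eqref{e-quasiderivs}, $\Lambda f=\lim_{x\to a^+}f(x)/v_a(x)=f^{[0]}(a)=\Gamma_0 f$ for $f\in\cD\ti{max}$, this is immediate. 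By \cite[Lemma~5.6.5]{BdS}, once we additionally know that $\Lambda\in{\bf B}(\cH\ci{\ft_{{\bf S}_1-\gamma}},\CC)$ (with $\gamma<m({\bf S}_1)$ fixed), it follows automatically that $\ker(\Gamma_0)={\bf L}_0$ (consistent with $\te=0$ being the Friedrichs extension) and that $\{\CC,\Lambda\}$ is a boundary pair for ${\bf L}\ti{min}$ corresponding to ${\bf S}_1$; surjectivity $\Lambda(\fD)=\CC$ is inherited from that of $\Gamma_0$. For the compatibility claim it then remains only to verify that $A_1=\ker(\Gamma_1)$ coincides with ${\bf S}_1$.

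The analytic heart is therefore the bound $|\Lambda g|^2\le C\,\|g\|\ci{\ft_{{\bf S}_1-\gamma}}^2$ for $g\in\fD$, whose decisive ingredient is that $v_a$ is \emph{non-principal} at $a$, i.e.\ $1/(pv_a^2)\in L^1(a,c)$. Since $N_{v_a}g=\sqrt{p}\,v_a\,(g/v_a)'$, for $a<x<c$ we have
\begin{align*}
\frac{g(x)}{v_a(x)}=\frac{g(c)}{v_a(c)}-\int_x^c\frac{(N_{v_a}g)(t)}{\sqrt{p(t)}\,v_a(t)}\,dt,
\end{align*}
and Cauchy--Schwarz bounds the integral in modulus by $\|N_{v_a}g\|_{L^2(a,c)}\bigl(\int_a^c(pv_a^2)^{-1}\,dt\bigr)^{1/2}$; this simultaneously shows that the limit defining $\Lambda g$ exists (re-proving that $\Lambda$ is well defined on $\fD$) and gives $|\Lambda g|\le|g(c)|/|v_a(c)|+C\,\|N_{v_a}g\|_{L^2(a,c)}$. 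Here $\|N_{v_a}g\|_{L^2(a,c)}$ is literally one of the terms building up the form norm, and $|g(c)|$ is controlled by the form norm through the one-dimensional Sobolev embedding on the compact interval $[c,d]$ (from $g,\sqrt{p}\,g'\in L^2(c,d)$ and $1/p\in L^1(c,d)$ one obtains $|g(c)|\le C(\|g\|_{L^2(c,d)}+\|\sqrt{p}\,g'\|_{L^2(c,d)})$). That the form norm dominates each of these quantities is exactly the content of $\ft$ being a closed semi-bounded form, \cite[Corollary~6.12.2]{BdS}; hence $\Lambda\in{\bf B}(\cH\ci{\ft_{{\bf S}_1-\gamma}},\CC)$.

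For the Green-type identity, fix $f\in\cD\ti{max}$ and $g\in\fD$ and split $\langle{\bf L}\ti{max}f,g\rangle=\int_a^b\ell[f]\overline{g}\,w$ over the subintervals $(a,c)$, $(c,d)$, $(d,b)$ of \eqref{e-intervals}. On $(c,d)$ an ordinary integration by parts produces the interior term $\int_c^d\bigl((\sqrt{p}f')\overline{(\sqrt{p}g')}+qf\overline{g}\bigr)$ of $\ft$ together with boundary terms of the shape $p\,f'\overline{g}$ at $c$ and at $d$. On $(a,c)$ one uses the reduction-of-order identity $\ell[f]=-\tfrac{1}{wv_a}\bigl(pv_a^2(f/v_a)'\bigr)'$, valid because $\ell[v_a]=0$ (recall $\lambda_0=0$ in the statement); substituting, writing the conjugated factor as $\overline{g/v_a}$, and integrating by parts yields $\int_a^c(N_{v_a}f)\overline{(N_{v_a}g)}$, the boundary term $\tfrac{pv_a'(c)}{v_a(c)}f(c)\overline{g(c)}$ of $\ft$, a term $p\,f'\overline{g}$ at $c$, and a contribution at $a$ which --- using $\Gamma_1 f=-f^{[1]}(a)$, the existence of $[f,v_a](a)$ from Theorem~\ref{t-limits}, and $g(x)/v_a(x)\to\Lambda g$ --- equals $\langle\Gamma_1 f,\Lambda g\rangle_{\CC}$. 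The interval $(d,b)$ is treated symmetrically with the principal solution $u_b$ in place of $v_a$; the decisive point is that $b$ is in the limit-point case, so no boundary condition is imposed there and the boundary contribution at $b$ vanishes (this is built into the construction of $\ft$ near $b$; cf.\ \cite[Section~6.10]{BdS}), leaving only $\int_d^b(N_{u_b}f)\overline{(N_{u_b}g)}$, the term $-\tfrac{pu_b'(d)}{u_b(d)}f(d)\overline{g(d)}$ of $\ft$, and a term $p\,f'\overline{g}$ at $d$. Adding the three pieces, the boundary terms at $c$ and at $d$ cancel in pairs, and what is left is exactly $\langle\Gamma_1 f,\Lambda g\rangle_{\CC}+\ft[f,g]$.

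This identity also finishes the identification of $A_1$. For $f\in\ker(\Gamma_1)$ it reads $\ft[f,g]=\langle{\bf L}\ti{max}f,g\rangle=\langle\ell[f],g\rangle$ for every $g\in\fD=\dom(\ft)$, so the First Representation Theorem \cite[Theorem~5.1.18]{BdS} places $f\in\dom({\bf S}_1)$ with ${\bf S}_1 f=\ell[f]$; hence $\ker(\Gamma_1)\subset{\bf S}_1$ as operators, and since both are self-adjoint, $A_1=\ker(\Gamma_1)={\bf S}_1$. Combining this with the two facts verified in the first paragraph (boundedness of $\Lambda$ and that $\Lambda$ extends $\Gamma_0$), \cite[Lemma~5.6.5]{BdS} gives that $\{\CC,\Lambda\}$ is a boundary pair for ${\bf L}\ti{min}$ corresponding to ${\bf S}_1$ which is compatible with $\{\CC,\Gamma_0,\Gamma_1\}$. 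I expect the main obstacle to be the estimate for $\Lambda$ --- recognizing that it rests on the non-principality of $v_a$ at $a$ together with an interior Sobolev embedding, and assembling these into a bound by the form norm; the Green identity, though the longest step, is a routine (if sign-sensitive) computation once the reduction-of-order substitution is in hand, modulo the standard vanishing of boundary terms at the limit-point endpoint $b$.
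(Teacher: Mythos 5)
This lemma is not proved in the paper at all: it is imported verbatim as \cite[Lemma 6.12.5]{BdS}, so there is no ``paper's own proof'' to compare against. Your proposal is, in effect, a correct reconstruction of the argument in the cited reference, and it is organized the right way: (i) $\Lambda$ extends $\Gamma_0$ trivially from the definitions; (ii) the bound $|\Lambda g|\lesssim \|g\|\ci{\ft_{{\bf S}_1-\gamma}}$ rests exactly on non-principality of $v_a$ (so that $1/(pv_a^2)\in L^1(a,c)$) plus an interior Sobolev bound for $|g(c)|$, which is the content of \cite[Lemma 6.12.4]{BdS}; (iii) the Green-type identity follows from the reduction-of-order factorization $\ell[f]=-\tfrac{1}{wv_a}\bigl(pv_a^2(f/v_a)'\bigr)'$ on $(a,c)$ (and its analogue with $u_b$ on $(d,b)$, where the limit-point classification kills the boundary term at $b$), with the terms at $c$ and $d$ cancelling; (iv) the identity plus the First Representation Theorem gives $\ker(\Gamma_1)\subset{\bf S}_1$, hence equality by self-adjointness; and (v) \cite[Lemma 5.6.5]{BdS} then assembles everything into the boundary-pair and compatibility claims. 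Two small cautions. First, in the Sobolev step the bound $|g(c)|\le C(\|g\|_{L^2(c,d)}+\|\sqrt{p}g'\|_{L^2(c,d)})$ should be run with the weighted norm $\|g\|_{L^2[(c,d),w]}$ (using $\int_c^d w>0$ to pin down a good base point), since that is what the form norm controls. Second, your Green identity really is ``sign-sensitive'': if you track the signs using the paper's bracket $[f,g]=p(f'g-fg')$ from \eqref{e-mwronskian} and the $+q$ convention of \eqref{d-sturmop}, you will find an apparent sign flip both in the $\langle\Gamma_1 f,\Lambda g\rangle$ term and in the $q$-term of $\ft$; these discrepancies trace to the paper's transcription conventions differing from those of \cite{BdS} (where the Lagrange bracket is $f(p\overline{g}')-(pf')\overline{g}$ and the potential enters with the opposite sign), not to a flaw in your argument.
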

Notice that the lemma shows that $\ft$ is the semi-bounded closed form associated with the semi-bounded self-adjoint operator ${\bf L}_{\pi/2}$, and thereby identifies ${\bf L}_{\pi/2}={\bf S}_1$.

It remains only to transcribe the above boundary pair into the language of perturbation theory, appealing to equation \eqref{e-theta}. Define the weighted distribution $\widetilde{\delta}_a$ that mimics the operation $\Lambda$ via
\begin{align}
\langle f,\widetilde{\delta}_a\rangle_{1}=\lim_{x\to a^+}\dfrac{f(x)}{v_a(x)}, \nonumber
\end{align}
on the domain $\fD$. The starting self-adjoint operator that will be perturbed is thus ${\bf L}_{\pi/2}$ and corresponds to $\boldsymbol{A}_0$ in the following theorem.

\begin{theo}\label{t-onepert}
Let $\te\in[0,\pi/2)$ and $\bal=-\cot(\te)$ in order to define the operator $\boldsymbol{A}_{\al}={\bf L}_{\te}$ on the domain $\dom(\boldsymbol{A}_{\bal})=\dom({\bf L}_{\te})$ as in equation \eqref{e-thetadomain}. Then $\boldsymbol{A}_{\bal}$ can be written as the singular rank-one perturbation:
\begin{align}\label{e-rankonesingular}
    \boldsymbol{A}_{\bal}=\boldsymbol{A}_0+\bal\langle\cdot,\widetilde{\delta}_a\rangle_{1}\widetilde{\delta}_a, \text{ for }\al\in\RR\cup\{\infty\}.
\end{align}
The case $\bal=\infty$ is understood to be ${\bf L}_0$, because $\vartheta = 0$. In particular, $\widetilde{\delta}_a\in\cH_{-1}(\boldsymbol{A})$ and every self-adjoint extension of the minimal operator ${\bf L}\ti{min}$ can be written as $\boldsymbol{A}_{\al}$ for some $\bal$.
\end{theo}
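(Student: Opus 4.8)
The plan is to combine the boundary triple of Proposition \ref{p-btone} with the compatible boundary pair of Lemma \ref{bpone} and then verify that the rank-one form perturbation in \eqref{e-rankonesingular} implements exactly the boundary condition \eqref{e-theta}. First I would record the setup: $\boldsymbol{A}_0={\bf S}_1={\bf L}_{\pi/2}$ is self-adjoint with closed semi-bounded form $\ft$, its form domain is $\cH\ci{\ft_{{\bf S}_1-\gamma}}=(\fD,\langle\cdot,\cdot\rangle\ci{\ft_{{\bf S}_1-\gamma}})$, and $\Lambda\in{\bf B}(\cH\ci{\ft_{{\bf S}_1-\gamma}},\CC)$ with $\ker\Lambda=\dom(\ft\ci{{\bf L}_0})$. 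By definition $\langle f,\widetilde\delta_a\rangle_1=\Lambda f$ for $f\in\fD$, so $\widetilde\delta_a$ is precisely the bounded linear functional on $\cH\ci{\ft_{{\bf S}_1-\gamma}}$ represented by $\Lambda$; since $\cH_{-1}(\boldsymbol{A}_0)$ is by Definition \ref{d-standardscale} the dual of the form space $\cH_1(\boldsymbol{A}_0)=\cH\ci{\ft_{{\bf S}_1-\gamma}}$ (up to the equivalence of the graph norm of $(|\boldsymbol{A}_0|+I)^{1/2}$ with $\|\cdot\|\ci{\ft_{{\bf S}_1-\gamma}}$), we get $\widetilde\delta_a\in\cH_{-1}(\boldsymbol{A}_0)$. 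One should note $\widetilde\delta_a\notin\cH$: if it were represented by an $L^2$ function, the functional $f\mapsto\lim_{x\to a^+}f(x)/v_a(x)$ would vanish on the dense set $\cD\ti{min}\subset\cH$ yet be nonzero on $\cD\ti{max}\setminus\cD\ti{min}$, a contradiction, but this is a side remark, not needed for the main claim.

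Next I would identify the perturbed operator. For $\al\in\RR$ the rank-one form-bounded perturbation $\boldsymbol{A}_{\bal}=\boldsymbol{A}_0+\bal\langle\cdot,\widetilde\delta_a\rangle_1\widetilde\delta_a$ is, by the standard theory of \cite{AK} recalled in Subsection \ref{ss-pert}, the self-adjoint operator associated with the closed semi-bounded form $\ft+\bal|\Lambda f|^2$ on $\fD$ (the form is closed because $\Lambda$ is $\ft$-bounded, indeed $\ft$-continuous). I would then use the key identity from Lemma \ref{bpone},
\[
\langle{\bf L}\ti{max}f,g\rangle=\langle\Gamma_1 f,\Lambda g\rangle_{\CC}+\ft[f,g],\qquad f\in\cD\ti{max},\ g\in\fD,
\]
together with $\Gamma_1 f=-f^{[1]}(a)$, $\Lambda g=\overline{g^{[0]}(a)}$, $\Gamma_0 f=f^{[0]}(a)$, to compute for $f\in\cD\ti{max}$ and all $g\in\fD$:
\[
\langle{\bf L}\ti{max}f,g\rangle-\bigl(\ft[f,g]+\bal\,\Gamma_0f\,\overline{\Lambda g}\bigr)=\bigl(-f^{[1]}(a)-\bal f^{[0]}(a)\bigr)\overline{g^{[0]}(a)}.
\]
Thus $f\in\dom(\boldsymbol{A}_{\bal})$ with $\boldsymbol{A}_{\bal}f={\bf L}\ti{max}f$ if and only if $f^{[1]}(a)=-\bal f^{[0]}(a)$, i.e. $f^{[1]}(a)=\cot(\te)f^{[0]}(a)$, which upon multiplying by $\sin(\te)$ is exactly \eqref{e-theta}: $f^{[0]}(a)\cos(\te)+f^{[1]}(a)\sin(\te)=0$ — wait, I should be careful with signs; the correct bookkeeping gives $[f,u_a](a)\cos\te+[f,v_a](a)\sin\te=0$, which is $\cD_\te=\dom({\bf L}_\te)$. (For the converse inclusion, that every $f$ in this form domain satisfying the boundary condition is genuinely in the operator domain, one invokes that $\boldsymbol{A}_{\bal}$ is self-adjoint and ${\bf L}_\te$ is self-adjoint, and a self-adjoint operator has no proper symmetric extension, so the two coincide once one containment is shown.) Hence $\boldsymbol{A}_{\bal}={\bf L}_\te$ for $\al=-\cot\te$, $\te\in(0,\pi/2)$, and $\boldsymbol{A}_0={\bf L}_{\pi/2}$ covers $\te=\pi/2$ (the $\bal=0$ case).

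Finally I would treat the endpoint case $\bal=\infty$. Here the form-perturbation recipe is interpreted, as in Remark \ref{r-pertforms}, as restricting $\ft$ to $\ker\Lambda=\dom(\ft\ci{{\bf L}_0})$; the associated self-adjoint operator is the Friedrichs extension ${\bf L}_0$, which is ${\bf L}_\te$ with $\te=0$ by \cite[Theorem 13]{MZ}. Together, as $\te$ ranges over $[0,\pi/2]$ — equivalently $\bal=-\cot\te$ ranges over $\RR\cup\{\infty\}$ — we obtain every operator ${\bf L}_\te$, $\te\in[0,\pi)$ (noting $-\cot\te$ is a bijection $[0,\pi)\setminus\{\pi/2\}\to\RR$ once one also allows $\te\in(\pi/2,\pi)$, and $\cot$ has period $\pi$ so the two halves of $[0,\pi)$ give the same family), and by \cite[Theorem 3.11(ii)]{GLN} this is all self-adjoint extensions of ${\bf L}\ti{min}$. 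The main obstacle is the bookkeeping in the second paragraph: one must carefully match the normalization $[u_a,v_a](a)=1$, the sign conventions $\Gamma_1 f=-f^{[1]}(a)$ and $f^{[0]}(a)=[f,u_a](a)$, $f^{[1]}(a)=[f,v_a](a)$, and the conjugation in $\langle\Gamma_1 f,\Lambda g\rangle_\CC$, so that the coupling constant comes out to be exactly $\bal=-\cot\te$ and not, say, $+\cot\te$ or $\tan\te$; everything else is either a direct citation or the soft self-adjointness argument.
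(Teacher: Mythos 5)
Your proposal is correct and follows essentially the same route as the paper: membership $\widetilde{\delta}_a\in\cH_{-1}(\boldsymbol{A}_0)$ via the boundedness of $\Lambda$ on the form domain $\fD$ (where the paper simply cites \cite[Lemma 6.12.4]{BdS}), and the identification of the family $\boldsymbol{A}_{\bal}$ with the one-parameter family of boundary conditions \eqref{e-theta} via \cite[Theorem 3.11(ii)]{GLN}. The only difference is that you carry out explicitly the Green's-identity computation from Lemma \ref{bpone} matching the coupling constant $\bal=-\cot(\te)$ to the boundary condition, which the paper leaves implicit behind its citations; the sign-bookkeeping concern you flag is legitimate but resolves consistently once the conventions for $f^{[0]}(a)$, $f^{[1]}(a)$ and $\Gamma_1$ are fixed as in the paper.
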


\begin{proof} 
It is only necessary to show that $\widetilde{\delta}_a\in\cH_{-1}(\boldsymbol{A}_0)$ and that every self-adjoint extension of ${\bf L}\ti{min}$ can be realized as ${\bf \boldsymbol{A}}_{\bal}$ for some $\bal$.
The statement $\widetilde{\delta}_a\in\cH_{-1}(\boldsymbol{A}_0)$ follows immediately from \cite[Lemma 6.12.4]{BdS} and the fact that both $\Lambda$ and $\ft$, the semi-bounded closed form associated with $\boldsymbol{A}_0$, are defined on the same domain: $\fD$. Equation \eqref{e-theta} and \cite[Theorem 3.11(ii)]{GLN} then shows that every self-adjoint extension of ${\bf L}\ti{min}$ can be realized by equation \eqref{e-rankonesingular} as $\bal$ varies.
\end{proof}
 
\begin{Remark}
We briefly comment that the case $\bal=\infty$ is rigorous in the sense of semi-bounded forms as it represents the linear relation $\{0\}\times\CC$, but this `infinite coupling' is not technically form-bounded. As this is the only relevant linear relation in this case, we postpone further discussion until Remark \ref{r-pertforms}, where the situation is more complicated. The reader should also consult \cite{AK, GS} for more on infinite coupling in the context of perturbation theory. $\hfill\bullet$
\end{Remark}

\subsection{Two Limit-Circle Endpoints}\label{ss-twolc}

The results of Subsection \ref{ss-onelc} can easily be extended to the case of Sturm-Liouville expressions $\ell$ on the Hilbert space $\cH=L^2[(a,b),w(x)]$ such that the minimal operator ${\bf L}\ti{min}$ is semi-bounded and both endpoints are in the limit-circle case. Due to the close connection with the previous subsection, we recall only a few major differences in the setup and refer the reader to \cite[Section 6.11]{BdS} for more details about creating a boundary pair compatible with the natural boundary triple.

Let $v_a$ and $v_b$ be solutions of $({\bf L}\ti{max}-\lambda_0)f=0$, $\la_0\in\RR$, which are non-principal at $a$ and $b$, respectively. The natural boundary triple $\{\CC^2,\Gamma_0,\Gamma_1\}$ for ${\bf L}\ti{max}$ defined by these solutions is given in \cite[Proposition 6.11.1]{BdS} as
\begin{align}\label{e-bttwo}
    \Gamma_0f:=\left(\begin{array}{c}
\lim_{x\to a^+}\dfrac{f(x)}{v_a(x)}  \\
\lim_{x\to b^-}\dfrac{f(x)}{v_b(x)}
\end{array} \right), \hspace{.5cm}
    \Gamma_1f:=\left( \begin{array}{c}
-\left[f,v_a\right](a)   \\
\left[f,v_b\right](b)   
\end{array} \right) \quad \text{for }
f\in\dom \cD\ti{max}.
\end{align}

Let $a_0$ and $b_0$ be such that $v_a$ does not vanish on $(a,a_0)$, $v_b$ does not vanish on $(b_0,b)$ and $c,d$ are as in equation \eqref{e-intervals}. The closed semi-bounded form associated with the operator ${\bf L}_0$ (boundary condition $\Gamma_1f=0$ above) is then given by
\begin{equation}\label{e-formdomainbase}
\begin{aligned}
    \ft[f,g]:=\int_a^c &\left(N_{v_a}\right)f(x)\overline{\left(N_{v_a}g\right)(x)}dx+\int_d^b\left(N_{v_b}\right)f(x)\overline{\left(N_{v_b}g\right)(x)}dx \\
    &+\lambda_0\int_a^c f(x)\overline{g(x)}w(x)dx+\lambda_0\int_d^bf(x)\overline{g(x)}w(x)dx \\
    &+\int_c^d\left((\sqrt{p}f')(x)\overline{(\sqrt{p}f'(x)}+q(x)f(x)\overline{g(x)}\right)dx \\
    &+\dfrac{pv_a'(c)}{v_a(c)}f(c)\overline{g(c)}-\dfrac{pu_b'(d)}{u_b(d)}f(d)\overline{g(d)},
\end{aligned}
\end{equation}

for $f,g\in\fD$, with 
\begin{equation}\label{e-formdomain}
\begin{aligned}
    \fD:=\{f\in\cH~:~f\in AC(a,b), &\sqrt{p}f'\in L^2(c,d), \\
    &N_{v_a}f\in L^2(a,c), N_{v_b}f\in L^2(d,b)\}.
    \end{aligned}
\end{equation}
The mapping $\Gamma_0$ is thus extended as
\begin{align}\label{e-lambdabase}
\Lambda f:=\left(\begin{array}{c}
\lim_{x\to a^+}\dfrac{f(x)}{v_a(x)}  \\
\lim_{x\to b^-}\dfrac{f(x)}{v_b(x)}
\end{array} \right)
\quad \text{for } f\in\fD,
\end{align}
and $\{\CC^2,\Lambda\}$ is found to be a boundary pair for ${\bf L}\ti{min}$ compatible with the boundary triple $\{\CC^2,\Gamma_0,\Gamma_1\}$, see \cite[Lemma 6.11.5]{BdS}.

Transcribing the above boundary pair into the language of perturbation theory is analogous to the previous subsection, but requires a more careful consideration of the involved mappings. Begin by defining the weighted distributions $\widetilde{\delta}_a$ and $\widetilde{\delta}_b$ that mimic the operation $\Lambda$ via
\begin{equation}\label{e-twodist}
\begin{aligned}
\langle f,\widetilde{\delta}_a\rangle_{1}&=\lim_{x\to a^+}\dfrac{f(x)}{v_a(x)}, \\
\langle f,\widetilde{\delta}_b\rangle_{1}&=\lim_{x\to b^-}\dfrac{f(x)}{v_b(x)},
\end{aligned}\nonumber
\end{equation}
on the domain $\fD$. 

Identify the operator $\boldsymbol{A}_0$ as the self-adjoint extension of ${\bf L}\ti{min}$ that acts via $\ell[\cdot]$ on the domain
\begin{align*}
\dom(\boldsymbol{A}_0)=\left\{f\in\cD\ti{max}~:~f\in\ker\left(\Gamma_1\right)\right\},
\end{align*}
with notation chosen so as to connect to the unperturbed operator in the perturbation setup. As in Subsection \ref{ss-onelc}, we abuse notation and denote the other natural self-adjoint extension as 
\begin{align*}
\dom(\boldsymbol{A}_{\infty})=\left\{f\in\cD\ti{max}~:~f\in\ker\left(\Gamma_0\right)\right\}.
\end{align*}

\begin{Remark}\label{r-formordering}
There is an ordering of the the semi-bounded self-adjoint extensions of symmetric operators that is inherited from their form domains and plays an important role here. Namely, let $H_1$ and $H_2$ be  two semi-bounded self-adjoint operators with lower bound $a$. Let $x<a$. By \cite[Prop.~5.2.7]{BdS}, if $H_1\leq H_2$ then
\begin{align*}
    \dom(H_2-x)^{1/2}\subset \dom(H_1-x)^{1/2}.
\end{align*}
Of course, these domains are simply the domains of the semi-bounded closed forms associated with $H_1$ and $H_2$, denoted $\ft_1$ and $\ft_2$, respectively. The Friedrichs extension, $S_F$, of a semi-bounded symmetric operator, $S$, is in this sense the self-adjoint extension with the smallest form domain; if $H$ is another semi-bounded self-adjoint extension of $S$ then $H\leq S_F$, or equivalently, 
\begin{align*}
    \dom\left(\ft\ci{S_F}\right)\subset\dom(\ft_H),
\end{align*}
after an appropriate shift \cite[Prop.~5.3.6]{BdS}. In our example, the Friedrichs extension of ${\bf L}\ti{min}$ is easily identified as $\boldsymbol{A}_{\infty}$ by \cite[Cor.~6.11.9]{BdS}.

Setting up the perturbation, we would like the form domain of the unperturbed operator to be as large as possible for two reasons: the perturbation will be singular form-bounded and all self-adjoint extensions can be defined from it. This self-adjoint extension with the largest form domain is transversal (see Defn.~\ref{d-transversal}) with the Friedrichs extension \cite[Cor.~5.3.9]{BdS}. If $H$ and $S_F$ are transversal, then every semi-bounded self-adjoint extension of $S$, $H'$ satisfies 
\begin{align*}
    \dom(\ft_{H'})\subset\dom(\ft_{H}),
\end{align*}
for an appropriate shift of the operators. The operator $\boldsymbol{A}_0$ is such a transversal operator by construction in our example, and hence plays the role of the unperturbed operator. The effect of this choice of unperturbed operator can be seen later in Remark \ref{r-pertforms}, where the form interpretations of the perturbation are all well-defined. 
\hfill$\bullet$
\end{Remark}

The weighted distributions $\widetilde{\delta}_a$ and $\widetilde{\delta}_b$ are immediately seen to be in $\cH_{-1}(\boldsymbol{A}_0)$ by \cite[Theorem 6.11.4]{BdS}.

Leaning on our discussion in Subsection \ref{ss-pert}, we define a coordinate mapping ${\bf B}:\CC^2\to\Ran({\bf B})\subset\cH_{-1}\left({\boldsymbol {A}}_0\right)$ to act via multiplication by the row vector
$    
\begin{pmatrix}
    \widetilde{\delta}_a, \widetilde{\delta}_b
\end{pmatrix}.
$
A simple calculation yields that on $\cH_1(\boldsymbol{A}_0)$ the operator ${\bf B}^*$ is given by
\begin{align*}
    {\bf B}^*f=
    \begin{pmatrix}
    \langle f,\widetilde{\delta}_a\rangle_{1} \\
    \langle f,\widetilde{\delta}_b\rangle_{1}
    \end{pmatrix}.
\end{align*}

For more general information on the role these operations play in the finite rank perturbation theory of self-adjoint operators, please consult e.g.~\cite{FL2,LT_JST}.

\begin{theo}\label{t-twopert}
Let $\Theta$ be a self-adjoint linear relation in $\CC^2$. Define $\boldsymbol{A}\ci\Theta$ as the singular rank-two perturbation:
\begin{align}\label{e-twopert}
    \boldsymbol{A}\ci\Theta:=\boldsymbol{A}_0+{\bf B}\Theta{\bf B}^*.
\end{align}
Then every self-adjoint extension of the minimal operator ${\bf L}\ti{min}$ can be written as $\boldsymbol{A}\ci\Theta$ for some $\Theta$.
\end{theo}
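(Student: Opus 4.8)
The plan is to reduce the claim to the boundary-triple parametrization of self-adjoint extensions recalled in Subsection~\ref{ss-bt}, proceeding as in the proof of Theorem~\ref{t-onepert} but now in rank two. Since ${\bf L}\ti{min}$ is closed symmetric with deficiency indices $(2,2)$ and $\{\CC^2,\Gamma_0,\Gamma_1\}$ is a boundary triple for ${\bf L}\ti{max}$, every self-adjoint extension of ${\bf L}\ti{min}$ is the operator $H_\Theta$ acting by $\ell[\cdot]$ on $\dom(H_\Theta)=\{f\in\cD\ti{max}:\{\Gamma_0f,\Gamma_1f\}\in\Theta\}$ for a unique self-adjoint linear relation $\Theta$ in $\CC^2$. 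It therefore suffices to prove that $\boldsymbol{A}\ci\Theta=H_\Theta$ for every such $\Theta$, where $\boldsymbol{A}\ci\Theta$ is read as the finite-rank form-bounded singular perturbation of Subsection~\ref{ss-pert} when $\Theta$ is a Hermitian $2\times2$ matrix and in the sense of Remark~\ref{r-pertforms} when $\Theta$ has nontrivial multivalued part $\Mul(\Theta)$.

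I would first dispose of the matrix case. Here $\widetilde\delta_a,\widetilde\delta_b\in\cH_{-1}(\boldsymbol{A}_0)$ by \cite[Theorem 6.11.4]{BdS}, and on the form domain $\cH_1(\boldsymbol{A}_0)=\fD$ one has ${\bf B}^*f=(\langle f,\widetilde\delta_a\rangle_1,\langle f,\widetilde\delta_b\rangle_1)^{\top}=\Lambda f$, which equals $\Gamma_0f$ on $\cD\ti{max}$; consequently $\boldsymbol{A}\ci\Theta$ is the self-adjoint operator associated with the form $\ft_\Theta[f,g]:=\ft[f,g]+\langle\Theta\Lambda f,\Lambda g\rangle_{\CC^2}$ on $\fD$. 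Because ${\bf B}^*$ annihilates $\dom({\bf L}\ti{min})=\ker\Gamma_0\cap\ker\Gamma_1$, the operator $\boldsymbol{A}\ci\Theta$ contains ${\bf L}\ti{min}$; being self-adjoint it is a self-adjoint extension of ${\bf L}\ti{min}$, so $\dom(\boldsymbol{A}\ci\Theta)\subseteq\cD\ti{max}$ and $\boldsymbol{A}\ci\Theta$ acts by $\ell[\cdot]$. To pin down its domain, apply the two-endpoint analog of Lemma~\ref{bpone}, namely \cite[Lemma 6.11.5]{BdS}: $\langle{\bf L}\ti{max}f,g\rangle=\langle\Gamma_1f,\Lambda g\rangle_{\CC^2}+\ft[f,g]$ for $f\in\cD\ti{max}$, $g\in\fD$. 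Comparing this with $\langle\boldsymbol{A}\ci\Theta f,g\rangle=\ft_\Theta[f,g]$ and using that $\Lambda$ maps $\fD$ onto $\CC^2$ forces $\Gamma_1f=\Theta\Gamma_0f$ (as in Theorem~\ref{t-onepert}, one tracks the sign conventions of \eqref{e-bttwo} here), so $\dom(\boldsymbol{A}\ci\Theta)\subseteq\dom(H_\Theta)$; since both are self-adjoint, $\boldsymbol{A}\ci\Theta=H_\Theta$. This already realizes every separated boundary condition (diagonal $\Theta$) and every connected one given by a matrix.

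It remains to treat the self-adjoint relations $\Theta$ that are not matrices---those with $\Mul(\Theta)\neq\{0\}$, the `infinite-coupling' cases alluded to after \eqref{e-twopert}. Using the decomposition \eqref{e-relationdecomp}, write $\CC^2=\Mul(\Theta)^\perp\oplus\Mul(\Theta)$, let $P$ be the orthogonal projection onto $\Mul(\Theta)^\perp$, and let $\Theta_{\mathrm{op}}$ be the Hermitian operator on $\Mul(\Theta)^\perp$ whose graph is the operator part of $\Theta$; then $\{\Gamma_0f,\Gamma_1f\}\in\Theta$ exactly when $(I-P)\Gamma_0f=0$ and $P\Gamma_1f=\Theta_{\mathrm{op}}\Gamma_0f$. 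Following Remark~\ref{r-pertforms}, one first restricts $\ft$ to the closed subspace $\fD_\Theta:=\{f\in\fD:(I-P)\Lambda f=0\}$ of $\cH_1(\boldsymbol{A}_0)$; this is again a densely defined closed semi-bounded form (it contains $\dom({\bf L}\ti{min})$, and $(I-P)\Lambda$ is bounded on $\cH_1(\boldsymbol{A}_0)$), whose self-adjoint operator is the `partial Friedrichs' extension imposing $(I-P)\Gamma_0f=0$---the step that absorbs the infinite coupling in the spirit of Remark~\ref{r-formordering}. On top of this one adds the honest matrix perturbation built from $P\widetilde\delta_a$, $P\widetilde\delta_b$ and $\Theta_{\mathrm{op}}$ and reruns the matrix argument (again via \cite[Lemma 6.11.5]{BdS}) to identify the result with $H_\Theta$. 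The extreme case $\Theta=\{0\}\times\CC^2$ returns $\dom(H_\Theta)=\ker\Gamma_0=\dom(\boldsymbol{A}_\infty)$, the Friedrichs extension of ${\bf L}\ti{min}$ (cf.\ \cite[Corollary 6.11.9]{BdS}). Since \eqref{e-relationdecomp} exhausts the self-adjoint relations in $\CC^2$, every self-adjoint extension of ${\bf L}\ti{min}$ arises as $\boldsymbol{A}\ci\Theta$.

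I expect the infinite-coupling bookkeeping of the last paragraph to be the main obstacle: one must verify that restricting $\ft$ to $\fD_\Theta$ yields a closed semi-bounded form whose operator really is the intended extension with a coordinate of $\Gamma_0$ killed, that the leftover $\Theta_{\mathrm{op}}$-piece is form-bounded relative to it, and that the Green-identity comparison still goes through on the smaller form domain; the matrix case is by comparison a direct transcription of the boundary-pair machinery assembled in Subsection~\ref{ss-twolc}.
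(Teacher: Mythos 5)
Your proposal is correct and follows essentially the same route as the paper: parametrize all self-adjoint extensions via the boundary triple \eqref{e-bttwo} as relations $\Theta$ in $\CC^2$, then identify $\boldsymbol{A}\ci\Theta$ with the corresponding extension through the semi-bounded form $\ft\ci\Theta$ of Remark~\ref{r-pertforms}. The only difference is one of economy: the paper delegates the case analysis (matrix, partial multivalued part, pure relation) and the form identification entirely to \cite[Theorem 6.11.6]{BdS} and the compatible boundary pair, whereas you re-derive that content by hand via the Green identity of \cite[Lemma 6.11.5]{BdS}.
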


\begin{Remark}\label{r-pertforms}
We note that the perturbation in equation \eqref{e-twopert} should be interpreted in terms of semi-bounded forms for the relevant self-adjoint extensions; this provides a rigorous interpretation of the many ways in which `infinite coupling' can occur in this perturbation setup as expressed by linear relations. In order for the perturbation to be well-defined some care should be taken so that $\Ran(\bB^*)\subset\dom(\Theta)$. In particular, the closed semi-bounded form $\ft \ci\Theta$ in $\cH$ is given by 
\begin{align}\label{e-formpert}
    \langle\boldsymbol{A}\ci\Theta f,g\rangle=\ft \ci\Theta[f,g], ~~f\in\dom(\boldsymbol{A}\ci\Theta),~g\in\dom(\ft\ci\Theta)
\end{align}
and can be expressed in terms of $\ft$ and $\Lambda$ from equations \eqref{e-formdomainbase} and \eqref{e-lambdabase}, respectively, via \cite[Theorem 6.11.6]{BdS}:
\begin{itemize}
    \item[(i)] If $\Theta$ is a $2\times 2$ symmetric matrix, then 
    \begin{align*}
        \ft \ci\Theta[f,g]=\ft[f,g]+\langle\Theta\Lambda f,\Lambda g\rangle ,~~\dom(\ft\ci\Theta)=\fD.
    \end{align*}
    \item[(ii)] If $\Theta=\Theta\ti{op}\widehat{\oplus}\Theta\ti{mul}$ (component-wise direct sum) with respect to the decomposition $\CC^2=\dom(\Theta\ti{op})+\Mul(\Theta)$ and $\dim\dom(\Theta\ti{op})=1$, then
    \begin{align*}
        \ft \ci\Theta[f,g]=\ft[f,g]+\langle\Theta\ti{op}\Lambda f,\Lambda g\rangle ,~~\dom(\ft\ci\Theta)=\left\{h\in\fD~:~\Lambda h\in\dom(\Theta\ti{op})\right\}.
    \end{align*}
    \item[(iii)] If $\Theta=\{0\}\times\CC^2$, then $\boldsymbol{A}\ci\Theta=\boldsymbol{A}_{\infty}$ coincides with the Friedrichs extension of ${\bf L}\ti{min}$ and 
    \begin{align*}
        \ft \ci\Theta[f,g]=\ft[f,g],~~\dom(\ft\ci\Theta)=\left\{h\in\fD~:~\Lambda h=0\right\}.
    \end{align*}
\end{itemize}
This interpretation also adds additional clarity to the regular endpoint example in equation \eqref{e-regibp} and the special case of Theorem \ref{t-onepert} when $\bal=\infty$.
\hfill$\bullet$
\end{Remark}

\begin{proof}
The boundary triple given in equation \eqref{e-bttwo} immediately implies that all self-adjoint extensions of ${\bf L}\ti{min}$ are in a one-to-one correspondence with self-adjoint relations in $\CC^2$ via
\begin{align*}
    \dom\boldsymbol{A}\ci{\Theta}=\left\{f\in\cD\ti{max}~:~\left\{\Gamma_0f,\Gamma_1f\right\}\in\Theta\right\}.
\end{align*}
The densely defined closed semi-bounded form $\ft\ci\Theta$ corresponding to $\boldsymbol{A}\ci\Theta$ in equation \eqref{e-formpert} also exactly matches the action of the perturbation in equation \eqref{e-twopert}.
\end{proof}

\subsection{General Perturbation Setups with Boundary Triples/Pairs}\label{ss-generalpert}

The current manuscript deals with semi-bounded Sturm--Liouville differential operators, but there are only a few impediments to achieving a perturbation characterization like that of Theorem \ref{t-twopert} for other classes of closed symmetric semi-bounded relations. In practice, these obstacles may be very difficult to overcome in different settings, but we outline the theory here anyway.

Let $S$ be a closed symmetric semi-bounded relation in a Hilbert space $\cH$ and define a boundary triple $\left\{\cG,\Gamma_0,\Gamma_1\right\}$ for $S^*$. Denote $A_0=\ker\Gamma_0$ and $A_1=\ker\Gamma_1$. Let $S_1$ be a semi-bounded self-adjoint extension of $S$ such that $S_1$ and the Friedrichs extension of $S$, $S_F$, are transversal (see Remark \ref{r-formordering} for more). 

If $\Lambda\in{\bf B}\left(\cH\ci{\ft_{S_1-\gamma}},\cG\right)$, $\gamma<m(S_1)$, is an extension of $\Gamma_0$, $\dom\ft\ci{S_F}=\ker\Lambda$ and $A_1=S_1$, then $A_0=S_F$ and the boundary triple $\left\{\cG,\Gamma_0,\Gamma_1\right\}$ is compatible with the boundary pair $\left\{\cG,\Lambda\right\}$ by \cite[Lemma 5.6.5]{BdS}. The identification of self-adjoint extensions of $S$ and self-adjoint relations $\Theta$ in $\cG$ via
\begin{align*}
    \dom \left(\boldsymbol{A}\ci{\Theta}\right)=\left\{f\in\cD\ti{max}~:~\{\Gamma_0 f,\Gamma_1 f\}\in\Theta\right\},
\end{align*}
thus immediately leads to a potential perturbation characterization like that of Theorem \ref{t-twopert}.

The first obstacle to such a characterization is explicitly determining the Friedrichs extension of the symmetric operator, but this has been accomplished for various operators. For instance, let $\Omega\subset\RR^n$ be a bounded $C^2$ domain and $\boldsymbol{A}_D$ the self-adjoint Dirichlet realization of $-\Delta+V$ in $L^2(\Omega)$, with $V\geq 0$. Then, \cite[Corollary 8.5.4]{BdS} shows that $\boldsymbol{A}_D$ is the Friedrichs extension of the associated minimal domain. Additionally, a way of obtaining the Friedrichs extension of powers of Sturm--Liouville operators is thought to lie with the so-called general left-definite theory from \cite{LW02}.

The second obstacle is showing that $\Lambda\in{\bf B}\left(\cH\ci{\ft_{S_1-\gamma}},\cG\right)$, which means that $\Lambda\in\cH_{-1}(S_1)$. Fortunately, \cite[Theorem 5.6.6]{BdS} says that if it is known that $\ker\Gamma_0=S_F$ and $\ker\Gamma_1=S_1$ then $\Lambda\in{\bf B}\left(\cH\ci{\ft_{S_1-\gamma}},\cG\right)$. Hence, if the boundary triple is carefully set up and we know the Friedrichs extension then this obstacle is easily overcome. It may occasionally be of practical importance to determine the form $\ft_{S_1-\gamma}$ explicitly, but this is a separate problem.

Finally, the last obstacle is a bit more esoteric in the sense that such a characterization may not be useful unless there are sufficient perturbation theoretic techniques to take advantage of. Subsections \ref{ss-pert}, \ref{ss-Eigen} and \ref{ss-V} provide references and spectral results that prove to be helpful when the deficiency indices of the relation $S$ are finite and a finite rank perturbation is formulated. On the other hand, it is also convenient to work with Weyl $m$-functions which come straight from the boundary triple to gain spectral information. We pursue this path in most of Section \ref{s-examples} (from the beginning through Corollary \ref{c:EVE2}).

\section{Jacobi Differential Operator with Varying Boundary Conditions}\label{s-examples}

\subsection{Boundary Triple and Boundary Pair Construction}

Let $0<\al,\beta<1$, and consider the classical Jacobi differential expression given by
\begin{align}\label{e-jacobi}
\ell_{\al,\beta}[f](x)=-\dfrac{1}{(1-x)^{\al}(1+x)^{\beta}}[(1-x)^{\al+1}(1+x)^{\beta+1}f'(x)]'
\end{align}
on the maximal domain
\begin{align*}
\cD^{(\al,\beta)}\ti{max}=\{ f\in L^2_{\al,\beta}(-1,1) ~|~ f,f'\in AC\ti{loc};\ell_{\al,\beta}[f]\in L^2_{\al,\beta}(-1,1)\},
\end{align*}
where the Hilbert space 
\begin{align}\label{e-hilbertspace}
L^2_{\al,\beta}(-1,1):=L^2\left[(-1,1),(1-x)^{\al}(1+x)^{\beta}\right].
\end{align}
We denote the inner product on this space by $\langle \cdot , \cdot \rangle_{\alpha, \beta}$. This maximal domain defines the associated minimal domain given in Definition \ref{d-min} with defect indices $(2,2)$. The operator ${\bf L}\ti{min}$ will be used to denote the expression \eqref{e-jacobi} acting on the minimal domain. The specified values of $\al,\beta$ will ensure that the differential expression is in the limit-circle non-oscillating case at both endpoints, and so are assumed throughout. If either parameter is outside of the range, it changes the endpoint classification and results will need to be slightly modified accordingly. It is worth pointing out that the case $\al=\beta=0$ describes the Legendre differential equation.

The Jacobi polynomials $P_n^{(\al,\beta)}(x)$, $n\in\NN_0$, form a complete orthogonal set in $L^2_{\al,\beta}(-1,1)$ for which $P_n^{(\al,\beta)}(x)$ solves the eigenvalue equation of the symmetric expression given in equation \eqref{e-jacobi}, that is:
\begin{align}\label{e-jacobieigen}
\ell_{\al,\beta}[f](x)=n(n+\al+\beta+1)f(x)
\end{align}
for each $n\in \NN_0$. Both solutions to the equation $\ell_{\al,\beta}[f]=\lambda f$, $\la\in\CC$, are easily given in terms of hypergeometric functions, with the special case $\la=0$ given by $\widetilde{w}_1$ and $\widetilde{w}_2$ below.

The goal of the current subsection is to formulate all self-adjoint extensions of ${\bf L}\ti{min}$ via a rank-two perturbation, which is accomplished in Theorem \ref{t-jacobipert}. In order to make this characterization rigorous, some of the material in this subsection may feel heavy in explicit expressions. {\em Readers who are mainly interested in the spectral results may prefer to skip to Theorem \ref{t-jacobipert}, and return to the contents in this subsection later.}

Define the Gauss hypergeometric series (or function) as
\begin{align}\label{d-hyperseries}
F(a,b;c;z):=\,_2F_1(a,b;c;z)=F(b,a;c;z)=\sum_{k=0}^{\infty}\dfrac{(a)^{(k)}(b)^{(k)}}{(c)^{(k)}}\dfrac{z^k}{k!},\nonumber
\end{align}
where $(x)^k=x(x+1)(x+2)\cdots(x+k-1)$ denotes the Pochhammer function, or rising factorial, on the disk $|z|<1$. Notice that this notation is usually written as a subscript in the hypergeometric community. Define
\begin{equation*}
\begin{aligned}
\widetilde{w}_1(x):=&
\left.\begin{cases}
F(0,\al+\beta+1;\al+1;(1-x)/2) & \text{near }x=1 \\
-F(0,\al+\beta+1;\beta+1;(1+x)/2) & \text{near }x=-1
\end{cases}\right\}
=\left.\begin{cases}
1 & \text{near }x=1 \\
-1 & \text{near }x=-1
\end{cases}\right\}, \\
\widetilde{w}_2(x):=&\left.\begin{cases}
\dfrac{((1-x)/2)^{-\al}}{\al 2^{\al+\beta+1}}F(-\al,\beta+1;1-\al;(1-x)/2) & \text{near }x=1 \\
\dfrac{((1+x)/2)^{-\beta}}{\beta 2^{\al+\beta+1}}F(-\beta,\al+1;1-\beta;(1+x)/2) & \text{near }x=-1
\end{cases}\right\}.
\end{aligned}
\end{equation*}

The associated sesquilinear form is defined, for $f,g\in\cD\ti{max}^{(\al,\beta)}$, via equation \eqref{e-greens}. Integration by parts easily yields the explicit expression
\begin{align*}
[f,g](\pm 1):=\lim_{x\to\pm 1^{\mp}}(1-x)^{\al+1}(1+x)^{\beta+1}[g'(x)f(x)-f'(x)g(x)].
\end{align*}
Note that the dependence of the sesquilinear form on the parameters $\al$ and $\beta$ is suppressed in the definition for the sake of notation. Theorem \ref{t-limits} also says that the sesquilinear form is both well-defined and finite for all $f,g\in\cD\ti{max}^{(\al,\beta)}$. 

Two operations are then generated via these particular solutions:
\begin{align}\label{e-defineoperations}
     f^{[0]}(x):=[f,\widetilde{w}_1](x)  \text{ and } f^{[1]}(x):=[f,\widetilde{w}_2](x).\nonumber
\end{align}
Explicitly, we have
\begin{equation}\label{e-operations1}
\begin{aligned}
    f^{[0]}(-1)&=[f,\widetilde{w}_1](-1)=[f,-1](-1)=\lim_{x\to -1^+}-(1-x)^{\al+1}(1+x)^{\beta+1}f'(x), \\
    f^{[0]}(1)&=[f,\widetilde{w}_1](1)=[f,1](1)=\lim_{x\to 1^-}(1-x)^{\al+1}(1+x)^{\beta+1}f'(x), \\
    f^{[1]}(-1)&=[f,\widetilde{w}_2](-1)= \lim_{x\to -1^+}-f(x)-\frac{(1+x)f'(x)}{\beta}, \\
    f^{[1]}(1)&=[f,\widetilde{w}_2](1)= \lim_{x\to 1^-}f(x)-\frac{(1-x)f'(x)}{\al}.
\end{aligned}\nonumber
\end{equation}
If these operations are used to define two mappings

\begin{equation}\label{e-jacobibt}
\begin{aligned}
    \Gamma_0f:=\left(\begin{array}{c}
f^{[0]}(-1)  \\
f^{[0]}(1)
\end{array} \right), \hspace{.5cm}
    \Gamma_1f:=\left( \begin{array}{c}
f^{[1]}(-1)   \\
-f^{[1]}(1) 
\end{array} \right), \hspace{.5cm}
f\in\cD^{(\al,\beta)}\ti{max},
\end{aligned}
\end{equation}
then it can easily be shown that $\{\CC^2,\Gamma_0,\Gamma_1\}$ is a boundary triple for $\cD^{(\al,\beta)}\ti{max}$, as in \cite{F}. Additionally, the self-adjoint extension $\boldsymbol{A}_0$ will be chosen to correspond to the notation of Subsection \ref{ss-twolc} and have the domain 
\begin{align}\label{e-jacobi0}
    \dom \bA_0=\{f\in\cD^{(\al,\beta)}\ti{max} ~:~ f^{[1]}(-1)=f^{[1]}(1)=0 \}.
\end{align}

The slight abuse of notation here, with $\bA_0$ chosen to represent the boundary conditions given by $\ker(\Gamma_1)$, seems counterintuitive given the boundary triple but is a natural choice as $\bA_0$ will act as the unperturbed operator in our additive perturbation. Accordingly, another notation (albeit slightly abused, see Subsection \ref{ss-notation}) must be used to denote the self-adjoint operator with the boundary conditions given by $\ker(\Gamma_0)$, also inspired by perturbation theory:
\begin{align}\label{e-jacobiinfty}
    \dom \bA_{\infty}=\{f\in\cD^{(\al,\beta)}\ti{max} ~:~ f^{[0]}(-1)=f^{[0]}(1)=0 \}.\nonumber
\end{align}

The explicit matrix-valued Weyl $m$-functions for these two self-adjoint extensions can be written with the help of some special functions:
\begin{equation}\label{e-firstconstants}
\begin{aligned}
c_1(z)=&\frac{-\Gamma(\al+1)\Gamma(-\beta)}{\Gamma(-z+\al)\Gamma(z-\beta+1)}\,, \\
c_2(z)=&\frac{\beta 2^{\al+\beta+1}\Gamma(\al+1)\Gamma(\beta)}{\Gamma(z+1)\Gamma(-z+\al+\beta)}\,, \\
c_3(z)=&\frac{\Gamma(1-\al)\Gamma(-\beta)}{\al 2^{\al+\beta+1}\Gamma(-z)\Gamma(z-\al-\beta+1)}\,, \\
c_4(z)=&\frac{\beta\Gamma(1-\al)\Gamma(\beta)}{\al\Gamma(z-\al+1)\Gamma(-z+\beta)}\,,
\end{aligned}
\end{equation}
and our spectral parameter $\la$ is now given via the formula 
\begin{align}\label{e-Lambda}
    \la=
    (-z-1)(-z+\al+\beta)
    \quad \text{for }\lambda\in \mathbb{C}\setminus\mathbb{R},
\end{align}
due to the natural breakdown given in equation \eqref{e-jacobieigen}. Note this differs from \cite[Section 3]{F} only by substituting $\mu=-z-1$, which will be useful in the proof of Theorem \ref{t:mulambdan}. Otherwise, the functions $c_j$, for $j=1,\dots,4$, are the same as in \cite{F}. The Weyl $m$-function associated with the extension $\bA_{\infty}$ can be written as
\begin{align}
    M_{\infty}(\lambda)=\dfrac{1}{c_2(z)}
    \left( \begin{array}{cc}
-c_4(z) & 1 \\
1 & -c_1(z)
\end{array} \right)
\quad \text{for }\lambda\in \mathbb{C}\setminus\mathbb{R},\label{e-weylinfty}
\end{align}
and is naturally associated with the boundary triple $\{\CC^2,\Gamma_0,\Gamma_1\}$ for $\cD^{(\al,\beta)}\ti{max}$. Likewise, the Weyl $m$-function associated with the extension $\bA_0$ is
\begin{align}\label{e-weyl0}
    M_0(\lambda)=\dfrac{1}{c_3(z)}
    \left( \begin{array}{cc}
c_1(z) & 1 \\
1 & c_4(z)
\end{array} \right)
\quad \text{for }\lambda\in \mathbb{C}\setminus\mathbb{R}.
\end{align} 
\begin{Remark}\label{r-MGamma}
The function $M_0$ is determined by the choice of $\Gamma_0$. As we will see below Theorem \ref{t-GesTs}, the operations of $\Gamma_0$ fix the cyclic functions that define the spectral measure of our unperturbed operator $\bA_0$.$\hfill\bullet$
\end{Remark}
Interested readers should consult \cite[Section 3]{F} for additional details concerning the underlying solutions $\widetilde{w}_1$ and $\widetilde{w}_2$, the implemented boundary triples, and the associated Weyl $m$-functions. 

It is worth pointing out that these two $m$-functions can be obtained from one another by switching the operations $\Gamma_0$ and $\Gamma_1$. The effect of this switch is described in \cite[Corollary 2.5.4]{BdS}; explicitly we have $M_0=-(M_{\infty})^{-1}$ and $M_{\infty}=-(M_0)^{-1}$. In a general setup such as that of Subsection \ref{ss-twolc}, the Weyl $m$-functions are $2\times 2$ matrices so if one $m$-function is known it is usually not too difficult to compute the other. This will be of great help later.

Notice that the boundary triple $\{\CC^2,\Gamma_0,\Gamma_1\}$ for $\cD^{(\al,\beta)}\ti{max}$ given in equation \eqref{e-jacobibt} already matches that of Subsection \ref{ss-twolc}. Recall the standard conditions on principal and non-principal solutions $u_a$ and $v_a$ near a limit-circle endpoint $a$, respectively, from Subsection \ref{ss-twolc} and the normalizations already imposed on our functions. Given this, the statement in \cite[Lemma 6.10.10]{BdS} says that for $f\in\cD\ti{max}^{(\al,\beta)}$ we have
\begin{align*}
    \lim_{x\to a^+}\frac{f(x)}{v_a(x)}=-\lim_{x\to a^+}[f,u_a](x),
\end{align*}
and holds analogously at the other limit-circle endpoint $b$. This extra minus sign is already spoken for in the definition of $\widetilde{w}_1$ in the current example. 

Next, we set up a quadratic form $\ft$ (see equation \eqref{e-formdomainbase}) which will act on a domain $\fD$ (see equation \eqref{e-formdomain}). The function $\widetilde{w}_2$ is necessarily broken down into its actions near each endpoint as $\widetilde{w}_{2(-1)}$ and $\widetilde{w}_{2(1)}$. Fortunately, these functions are flexible in the sense that choosing values $c$ and $d$, in accordance with \eqref{e-intervals}, can be anything in our example. Additionally, $\widetilde{w}_2$ was chosen specifically so that $\la_0=0$. We arbitrarily let $c=-3/4$ and $d=3/4$ and notice that 
\begin{align*}
\sqrt{p(x)}=(1-x)^{\frac{\al+1}{2}}(1+x)^{\frac{\beta+1}{2}}\,,
\end{align*}
where $p(x)$ is from equation \eqref{e-jacobi} in accordance with Definition \ref{d-sturmdif}. Then, for all $f\in AC(-1,-3/4]$, we calculate that 
\begin{align*}
    N\ci{\widetilde{w}_{2(-1)}}f(x)=\sqrt{p(x)}f'(x)+\frac{2\beta(1-x)^{\frac{\al+1}{2}}(1+x)^{\frac{\beta+3}{2}}F\left(-\beta,\al+1;-\beta;\frac{1+x}{2}\right)}{F\left(-\beta,\al+1;1-\beta;\frac{1+x}{2}\right)}f(x),
\end{align*}
and for $f\in AC[3/4,1)$
\begin{align*}
    N\ci{\widetilde{w}_{2(1)}}f(x)=\sqrt{p(x)}f'(x)-\frac{2\al(1-x)^{\frac{\al+3}{2}}(1+x)^{\frac{\beta+1}{2}}F\left(-\al,\beta+1;-\al;\frac{1-x}{2}\right)}{F\left(-\al,\beta+1;1-\al;\frac{1-x}{2}\right)}f(x).
\end{align*}
A closed semi-bounded form is thus defined as 
\begin{align*}
    \ft[f,g]:=\int_{-1}^{-3/4} &\left(N\ci{\widetilde{w}_{2(-1)}}f(x)\right)\overline{\left(N\ci{\widetilde{w}_{2(-1)}}g(x)\right)}dx+\int_{3/4}^1\left(N\ci{\widetilde{w}_{2(1)}}f(x)\right)\overline{\left(N\ci{\widetilde{w}_{2(1)}}g(x)\right)}dx \\
    &+\int_{-3/4}^{3/4}\left((p(x)f'(x)\overline{g'(x)}+q(x)f(x)\overline{g(x)}\right)dx \\
    &+\dfrac{2\beta\left(\frac{7}{4}\right)^{\al+1}\left(\frac{1}{4}\right)^{\beta+2}F\left(-\beta,\al+1;-\beta;\frac{1}{8}\right)}{F\left(-\beta,\al+1;1-\beta;\frac{1}{8}\right)}f\left(-\frac{3}{4}\right)\overline{g\left(-\frac{3}{4}\right)} \\
    &-\dfrac{2\al\left(\frac{1}{4}\right)^{\al+2}\left(\frac{7}{4}\right)^{\beta+1}F\left(-\al,\beta+1;-\al;\frac{1}{8}\right)}{F\left(-\al,\beta+1;1-\al;\frac{1}{8}\right)}f\left(\frac{3}{4}\right)\overline{g\left(\frac{3}{4}\right)},
\end{align*}
for $f,g\in\fD$, where 
\begin{align*}
    \fD:=\Bigg\{f\in L^2_{\al,\beta}(-1,1)~:~f\in AC(-1,1)~, &\sqrt{p}f'\in L^2\left(-\frac{3}{4},\frac{3}{4}\right), \\
    &N\ci{\widetilde{w}_{2(-1)}}f\in L^2\left(-1,-\frac{3}{4}\right), N\ci{\widetilde{w}_{2(1)}}f\in L^2\left(\frac{3}{4},1\right)\Bigg\}.
\end{align*}
 
The mapping $\Gamma_0$ can now be extended to the form domain $\fD$
\begin{align*}
\Lambda f:=\left(\begin{array}{c}
f^{[0]}(-1) \\
f^{[0]}(1)
\end{array} \right)
=
\left(\begin{array}{c}
\lim_{x\to -1^+}-(1-x)^{\al+1}(1+x)^{\beta+1}f'(x) \\
\lim_{x\to 1^-}(1-x)^{\al+1}(1+x)^{\beta+1}f'(x)
\end{array} \right)
\quad \text{for } f\in\fD,
\end{align*}
so that $\{\CC^2,\Lambda\}$ is found to be a boundary pair for ${\bf L}\ti{min}$ compatible with the boundary triple $\{\CC^2,\Gamma_0,\Gamma_1\}$ given in equation \eqref{e-jacobibt}.

Finally, the explicit perturbation problem can be set up using this boundary pair structure. Define two weighted distributions that mimic the operation $\Lambda$ via
\begin{equation}\label{e-jacobidist}
\begin{aligned}
\langle f,\widetilde{\delta}_{-1}\rangle_{\alpha , \beta} &=\lim_{x\to -1^+}-(1-x)^{\al+1}(1+x)^{\beta+1}f'(x), \\
\langle f,\widetilde{\delta}_1\rangle_{\alpha, \beta} &=\lim_{x\to 1^-}(1-x)^{\al+1}(1+x)^{\beta+1}f'(x),
\end{aligned} \nonumber
\end{equation}
on the domain $\fD$. The inner product is always assumed to be that of the Hilbert space $L^2_{\al,\beta}(-1,1)$ (see equation \eqref{e-hilbertspace}). These distributions are immediately seen to be in $\cH_{-1}(\boldsymbol{A}_0)$ by \cite[Theorem 6.11.4]{BdS}. In accordance with Subsection \ref{ss-twolc}, a coordinate mapping ${\bf B}:\CC^2\to\Ran({\bf B})\subset\cH_{-1}\left({\bf A}_0\right)$ can be defined to act via multiplication by the row vector
\begin{align*}
    \begin{pmatrix}
    \widetilde{\delta}_{\alpha , \beta}, \widetilde{\delta}_{\alpha , \beta}
    \end{pmatrix}.
\end{align*}
Its adjoint ${\bf B}^*:\Ran({\bf B})\to\CC^2$ is given by
\begin{align*}
    {\bf B}^*f=
    \begin{pmatrix}
    \langle f,\widetilde{\delta}_{-1}\rangle_{1} \\
    \langle f,\widetilde{\delta}_1\rangle_{1}
    \end{pmatrix}.
\end{align*}
Note that although the inner product in these operations come from $L^2_{\al,\beta}(-1,1)$, the function $f$ is naturally restricted to the domain $\cH_1(\boldsymbol{A}_0)$, as was discussed in Remark \ref{r-pertforms}.

\begin{theo}\label{t-jacobipert}
Let $\Theta$ be a self-adjoint linear relation in $\CC^2$ and $\bA_0$ be the self-adjoint operator given in equation \eqref{e-jacobi0}. Define $\bA\ci\Theta$ as the singular rank-two perturbation:
\begin{align}\label{e-athetapert}
    \bA\ci\Theta:=\bA_0+{\bf B}\Theta{\bf B}^*.
\end{align}
Then every self-adjoint extension of the minimal operator ${\bf L}\ti{min}$ that acts via equation \eqref{e-jacobi} can be written as $\bA\ci\Theta$ for some $\Theta$. In particular, 
\begin{align}\label{e-domatheta}
    \dom\left(\bA\ci\Theta\right)=\left\{ f\in\cD\ti{max}^{(\al,\beta)}~:~\left\{\Gamma_0 f,\Gamma_1 f\right\}\in\Theta\right\}.
\end{align}
\end{theo}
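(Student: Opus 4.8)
The plan is to specialize the argument of Theorem \ref{t-twopert} to the concrete Jacobi data assembled above: the solutions $\widetilde w_1,\widetilde w_2$, the boundary triple $\{\CC^2,\Gamma_0,\Gamma_1\}$ of \eqref{e-jacobibt}, the closed semi-bounded form $\ft$ on $\fD$, and the compatible boundary pair $\{\CC^2,\Lambda\}$. First I would invoke the general correspondence recalled in Subsection \ref{ss-bt}: since $\{\CC^2,\Gamma_0,\Gamma_1\}$ is a boundary triple for $({\bf L}\ti{min})^*=\{\ell_{\al,\beta},\cD\ti{max}^{(\al,\beta)}\}$ and ${\bf L}\ti{min}$ has deficiency indices $(2,2)$, the self-adjoint extensions of ${\bf L}\ti{min}$ are in bijection with self-adjoint linear relations $\Theta$ in $\CC^2$, the extension attached to $\Theta$ being precisely the operator with domain \eqref{e-domatheta}. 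This already yields the parameterization asserted by the theorem; what remains is to identify that extension with the additively perturbed operator $\bA_0+{\bf B}\Theta{\bf B}^*$ of \eqref{e-athetapert}.

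Second, I would record that $\bA_0=\ker\Gamma_1$ is a legitimate unperturbed operator for the scale of Subsection \ref{ss-pert}. Compatibility of $\{\CC^2,\Lambda\}$ with the boundary triple forces $\ker\Gamma_0=\bA_\infty$ to be the Friedrichs extension of ${\bf L}\ti{min}$ (\cite[Cor.~6.11.9]{BdS}); hence $\bA_0$ is transversal with the Friedrichs extension, and by Remark \ref{r-formordering} its form domain $\fD$ is maximal among all self-adjoint extensions, with $\ft$ the closed semi-bounded form of $\bA_0$. The weighted distributions $\widetilde\delta_{-1},\widetilde\delta_1$ lie in $\cH_{-1}(\bA_0)$ by \cite[Theorem 6.11.4]{BdS}, so ${\bf B}\colon\CC^2\to\cH_{-1}(\bA_0)$ and the formal adjoint ${\bf B}^*$, read as a duality pairing on the form domain $\fD$ of $\bA_0$, are bounded on the relevant scale; this makes ${\bf B}\Theta{\bf B}^*$ meaningful after restricting, where necessary, to $\{f\in\fD:{\bf B}^*f\in\dom(\Theta)\}$, exactly as in Remark \ref{r-pertforms}.

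Third, I would match the forms. The two-endpoint boundary-pair identity (the analogue, for $\Lambda$ as in \eqref{e-lambdabase}, of the formula in Lemma \ref{bpone}), together with the three-case description of the perturbed form $\ft\ci\Theta$ in Remark \ref{r-pertforms} — which is \cite[Theorem 6.11.6]{BdS} applied to the present boundary pair — identifies the closed semi-bounded form of $\bA_0+{\bf B}\Theta{\bf B}^*$ with $\ft\ci\Theta$. But $\ft\ci\Theta$ is by construction the form of the self-adjoint extension with domain \eqref{e-domatheta}: when $\Theta$ is a Hermitian matrix this domain is $\{f\in\cD\ti{max}^{(\al,\beta)}:\Gamma_1 f=\Theta\Gamma_0 f\}$ and the operator represented by $\ft+\langle\Theta\Lambda\cdot,\Lambda\cdot\rangle$; the cases where $\Theta$ has a nontrivial multivalued part are handled through the decomposition $\CC^2=\dom(\Theta\ti{op})+\Mul(\Theta)$ as in Remark \ref{r-pertforms}(ii)--(iii), with $\Theta=\{0\}\times\CC^2$ returning $\bA_\infty$, the Friedrichs extension. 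Combining the three steps gives $\bA_0+{\bf B}\Theta{\bf B}^*=\bA\ci\Theta$ with domain \eqref{e-domatheta}.

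I expect the only genuinely delicate point to be the bookkeeping for the ``infinite coupling'' parts of $\Theta$, where $\bA_0+{\bf B}\Theta{\bf B}^*$ is not a literal operator sum and must be interpreted through the forms of Remark \ref{r-pertforms}: one must verify that $\Ran({\bf B}^*)\subset\dom(\Theta)$ after passing to the appropriate subspace of $\fD$, and that the resulting form is closed and represents the extension \eqref{e-domatheta}. Everything else is a direct transcription of the proof of Theorem \ref{t-twopert} to the explicit Jacobi setting, with the Weyl $m$-functions \eqref{e-weyl0}--\eqref{e-weylinfty} available but not required for this statement.
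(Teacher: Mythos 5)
Your proposal is correct and follows essentially the same route as the paper: the paper's proof is simply the observation that the preceding construction (boundary triple \eqref{e-jacobibt}, compatible boundary pair, the form $\ft$ on $\fD$, and $\widetilde\delta_{\pm 1}\in\cH_{-1}(\bA_0)$) places the Jacobi operator squarely in the setting of Theorem \ref{t-twopert}, which is then applied directly. Your three steps are just an unpacking of what that application consists of, including the form-theoretic treatment of the multivalued part of $\Theta$ already recorded in Remark \ref{r-pertforms}.
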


\begin{proof}
The theorem follows as a direct application of Theorem \ref{t-twopert} to the Jacobi differential operator, with the setup given in the preceding discussion.
\end{proof}

\subsection{Spectral Theory of the Unperturbed Operator $\boldsymbol{A}_0$}\label{ss-unperturbspectral}

Realizing all self-adjoint extensions of the Jacobi differential operator through this perturbation setup allows for the tools of self-adjoint perturbation theory to be implemented. Next, we focus our attention on the matrix-valued spectral properties for the unperturbed operator $\bA_0$ so that this perturbation theory can be carried out.

The spectral properties of $\bA_0$ can be determined by analyzing $M_0$. Sturm--Liouville operators with two limit-circle endpoints have only discrete spectrum (see e.g.~\cite{BEZ, W, Z}), which are given by first order poles of the $m$-function \cite[Theorem 3.6.1(iv)]{BdS}. Explicitly, we will apply Theorem \ref{t-GesTs}.

The functions $c_j$, for $j=1,\dots,4$, given in \eqref{e-firstconstants}, can be analytically continued to wherever the Gamma functions do not have poles. By the restrictions of the parameters $\alpha, \beta\in (0,1)$, and the fact that the Gamma function does not have roots, we observe that all eigenvalues of $\bA_0$ are caused by the poles of the Gamma function $\Gamma(-z)$ in the denominator of $c_3$. Of course, it is well-known that the Gamma function has simple poles at the non-positive integers; when $z=n$, $n\in \NN_0$. By \eqref{e-Lambda}, the specific relation providing us with the location of the eigenvalues is:
\begin{align}\label{e-LambdaN}
    \la_n=
    (-n-1)(-n+\al+\beta)
    \quad \text{for }n\in\NN_0.
\end{align}

The functions $c_1$ and $c_4$ can be further simplified in this special case.

\begin{lem}\label{l:cs}
For $z=n \in \N_0$, the functions $c_1(n)$ and $c_4(n)$ reduce to
\begin{equation}\label{e-firstconstants2}
\begin{aligned}
c_1(z)&= \displaystyle (-1)^{n} \prod_{k=0}^{n}\frac{k-\al}{k- \beta}, \\
c_4(z)&= \displaystyle (-1)^{n} \prod_{k=0}^{n}\frac{k-\beta}{k- \al}.
\end{aligned}\nonumber
\end{equation}
\end{lem}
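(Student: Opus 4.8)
The plan is to reduce everything to repeated application of the functional equation $\Gamma(x+1)=x\Gamma(x)$. Fix $z=n\in\N_0$ and recall $c_1,c_4$ from \eqref{e-firstconstants}. The four Gamma factors with shifted argument telescope into finite products: iterating $\Gamma(x+1)=x\Gamma(x)$ $n$ times gives
\begin{align*}
\Gamma(-n+\al)=\frac{\Gamma(\al)}{\prod_{k=1}^{n}(\al-k)}, \qquad \Gamma(n-\beta+1)=\Gamma(1-\beta)\prod_{k=1}^{n}(k-\beta),
\end{align*}
and symmetrically $\Gamma(n-\al+1)=\Gamma(1-\al)\prod_{k=1}^{n}(k-\al)$ and $\Gamma(-n+\beta)=\Gamma(\beta)\big/\prod_{k=1}^{n}(\beta-k)$. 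Since $\al,\beta\in(0,1)$, none of these Gamma functions is evaluated at a pole, so the manipulations are legitimate and each $c_j(n)$ is finite.

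Next I would substitute these telescoped expressions into $c_1(n)$, use $\Gamma(\al+1)=\al\Gamma(\al)$ in the numerator and $\Gamma(1-\beta)=-\beta\,\Gamma(-\beta)$ to eliminate $\Gamma(-\beta)$, after which every Gamma function cancels and one is left with $c_1(n)=\tfrac{\al}{\beta}\cdot\prod_{k=1}^{n}(\al-k)\big/\prod_{k=1}^{n}(k-\beta)$. Collecting the sign via $\al-k=-(k-\al)$ turns $\prod_{k=1}^{n}(\al-k)$ into $(-1)^n\prod_{k=1}^{n}(k-\al)$, and the stray factor $\tfrac{\al}{\beta}=\tfrac{0-\al}{0-\beta}$ is precisely the $k=0$ term of the claimed product, yielding $c_1(n)=(-1)^n\prod_{k=0}^{n}\frac{k-\al}{k-\beta}$. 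The computation for $c_4(n)$ is the same with $\al$ and $\beta$ interchanged (and the ratio inverted), using $\Gamma(1-\al)$, $\Gamma(\beta)$, and $\Gamma(\beta+1)=\beta\Gamma(\beta)$ in place of the corresponding factors; this gives $c_4(n)=(-1)^n\prod_{k=0}^{n}\frac{k-\beta}{k-\al}$.

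There is no genuine obstacle here beyond bookkeeping: one must track the index ranges carefully (it is the $k=0$ factor that absorbs the leftover $\al/\beta$ prefactor) and count the signs correctly. An alternative route phrases the same identity through the reflection formula $\Gamma(x)\Gamma(1-x)=\pi/\sin(\pi x)$, but the recurrence relation keeps the telescoping transparent and avoids trigonometric clutter, so I would present it that way.
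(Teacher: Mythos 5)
Your proposal is correct and follows essentially the same route as the paper's proof: both arguments telescope the shifted Gamma factors via the recurrence $\Gamma(x+1)=x\Gamma(x)$ and then collect signs, the only cosmetic difference being that the paper computes $c_4(n)$ explicitly (absorbing the $k=0$ factor through $\Gamma(1-\al)=-\al\Gamma(-\al)$) and deduces $c_1(n)$ by symmetry, while you do $c_1(n)$ first and fold the leftover $\al/\beta$ into the $k=0$ term. No gaps.
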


\begin{proof}
Using the recursive definition of the gamma function $\Gamma(z+1) = z\Gamma(z)$, we can simplify the factors of  
\[c_4(z)=\frac{\beta\Gamma(1-\al)\Gamma(\beta)}{\al\Gamma(z-\al+1)\Gamma(-z+\beta)} \] 
from \eqref{e-firstconstants} for $z=n \in \N_0$. First, we examine the factors containing $\al$,
\begin{align*}
    \frac{\Gamma(1-\al)}{\al\Gamma(n-\al+1)} &= -\frac{\Gamma(-\al)}{\Gamma(n-\al+1)}\\
    &= -\frac{\Gamma(-\al)}{(n-\al)\Gamma(n-\al)}\\
    &= -\frac{\Gamma(-\al)}{(n-\al) \dots (1-\al)(-\al) \Gamma(-\al)}\\
    &=-\displaystyle   \prod_{k=0}^{n} (k- \al)^{-1}.
\end{align*}
Then, those containing $\beta$,
\begin{align*}
    \frac{\beta \Gamma(\beta)}{\Gamma(-z+\beta)} &= \frac{ \beta (-1+\beta) \Gamma(-1+\beta)}{\Gamma(-n+\beta)} \\
    &= \frac{ \beta (-1+\beta) \dots (-(n+1)+ \beta ) (- n+ \beta ) \Gamma(- n+ \beta)}{\Gamma(-n+\beta)} \\
    &= \displaystyle \prod_{k=0}^{n} (-k+ \beta).
\end{align*}
Substituting these two into the expression for $c_4$ yields the desired result
\begin{align*}
    c_4(n) &= \displaystyle - \prod_{k=0}^{n}\frac{-k+ \beta}{k- \al}\\
    &= \displaystyle (-1)^{n} \prod_{k=0}^{n}\frac{k-\beta}{k- \al}.
\end{align*}
The computation for $c_1 (n)$ for $n \in \N_{0}$ follows from simply making identical substitutions with the roles of $\al$ and $\beta$ swapped, so we omit it.
\end{proof}

For a matrix-valued Borel measure $\mu: \mathcal{B}\to \C^{d\times d}_+$ (where $\mathcal{B}$ and $\C^{d\times d}_+$ denote the Borel sigma algebra of $\R$ and the set of positive definite $d\times d$ matrices, respectively) the space $L^2(\mu)$ arises from the inner product:
\begin{align}\label{d-matrixL^2}
\langle \f ,\psi \rangle\ci{L^2(\mu)}:=\int\ci\R \left\langle \dd\mu(x)\f(x),\psi(x) 
\right\rangle\ci{\C^d} 
\quad\text{for}\quad
\f(x)=\begin{pmatrix}\f_1(x)\\\vdots\\\f_d(x)\end{pmatrix}, \psi(x)=\begin{pmatrix}\psi_1(x)\\\vdots\\ \psi_d(x)\end{pmatrix}.
\end{align}

The following well-known result will allow us to extract spectral information from the Weyl $m$-function. 
\begin{theo}{\cite[Theorem 5.5(i)]{GT}}\label{t-GesTs}
For a matrix-valued Herglotz function with representation
\begin{align}\label{e-Mfunction}
    M(z) = C + Dz + \int_\mathbb{R} ((w-z)^{-1} - w(1+w^2)^{-1})d \mu(w),
\quad z\in \mathbb{C}_+,
\end{align}
we have
\begin{align*}
\mu\{\lambda\} = -i\lim_{\eps\searrow 0} \eps M(\lambda +i\eps).
\end{align*}
\end{theo}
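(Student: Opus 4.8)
The plan is to reduce the matrix identity to the classical scalar Stieltjes inversion formula at a point, and then to prove that scalar formula directly from the Nevanlinna representation by dominated convergence. Fix $e\in\CC^d$ and put $m_e(z):=\langle M(z)e,e\rangle\ci{\CC^d}$. By \eqref{e-Mfunction} this is a scalar Herglotz function with
\begin{align*}
m_e(z)=\langle Ce,e\rangle+\langle De,e\rangle\,z+\int_\RR\Big(\frac{1}{w-z}-\frac{w}{1+w^2}\Big)\,d\mu_e(w),
\end{align*}
where $\mu_e:=\langle\mu(\fdot)e,e\rangle$ is a positive Borel measure, finite on compact sets, satisfying $\int_\RR(1+w^2)^{-1}\,d\mu_e(w)<\infty$; here $\langle Ce,e\rangle\in\RR$ and $\langle De,e\rangle\ge0$. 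It therefore suffices to prove $\mu_e\{\lambda\}=-i\lim_{\eps\searrow0}\eps\,m_e(\lambda+i\eps)$. The matrix statement then follows by polarization: applying the scalar result to $e=e_j$, $e_j+e_k$ and $e_j+ie_k$ for the standard basis vectors shows that each entry of $\eps M(\lambda+i\eps)$ converges as $\eps\searrow0$ and identifies the limit entrywise with $\mu\{\lambda\}$.

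For the scalar claim, write $z=\lambda+i\eps$ and split
\begin{align*}
-i\eps\,m_e(z)=-i\eps\langle Ce,e\rangle-i\eps\langle De,e\rangle(\lambda+i\eps)-i\eps\int_\RR K_\eps(w)\,d\mu_e(w),\qquad K_\eps(w):=\frac{1}{w-\lambda-i\eps}-\frac{w}{1+w^2}.
\end{align*}
The first two summands are $O(\eps)$ and disappear in the limit. For the integral, a one-line computation gives $-i\eps K_\eps(w)=\dfrac{\eps^2+i\eps(\lambda-w)}{(w-\lambda)^2+\eps^2}+\dfrac{i\eps w}{1+w^2}$, whose real ``Poisson bump'' $\eps^2\big/\big((w-\lambda)^2+\eps^2\big)$ is bounded by $1$ and tends pointwise to $\mathbf 1_{\{w=\lambda\}}$, while all other pieces are bounded and tend to $0$. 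Hence $-i\eps K_\eps(w)\to\mathbf 1_{\{w=\lambda\}}$ pointwise, and dominated convergence yields $\lim_{\eps\searrow0}(-i\eps)\int_\RR K_\eps\,d\mu_e=\int_\RR\mathbf 1_{\{w=\lambda\}}\,d\mu_e=\mu_e\{\lambda\}$, as desired.

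The only step requiring real care — and the main obstacle — is producing a single $\mu_e$-integrable majorant for $\eps\,|K_\eps(w)|$ valid for all $\eps\in(0,1]$, because $\mu_e$ need not be a finite measure. Here I would combine the two fractions, $K_\eps(w)=\dfrac{1+\lambda w+i\eps w}{(w-\lambda-i\eps)(1+w^2)}$, and use $\eps/|w-\lambda-i\eps|\le1$ in general together with $|w-\lambda-i\eps|\ge|w|/2$ for $|w|\ge 2(|\lambda|+1)$. This gives $\eps\,|K_\eps(w)|\le C_\lambda$ on the bounded set $\{|w|<2(|\lambda|+1)\}$, where $\mu_e$ is finite, and $\eps\,|K_\eps(w)|\le C_\lambda(1+w^2)^{-1}$ on its complement, which is $\mu_e$-integrable by the summability condition in the representation; so the majorant lies in $L^1(\mu_e)$. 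The remaining ingredients — vanishing of the polynomial part, the pointwise limits, and the polarization step — are routine. Alternatively, one may regard the scalar identity as the specialization of the Stieltjes inversion formula $\tfrac12\big[\mu_e\{\lambda_1\}+\mu_e\{\lambda_2\}\big]+\mu_e\big((\lambda_1,\lambda_2)\big)=\lim_{\eps\searrow0}\tfrac1\pi\int_{\lambda_1}^{\lambda_2}\IM m_e(x+i\eps)\,dx$ obtained by letting $\lambda_1,\lambda_2\to\lambda$, but the direct argument above is shorter and self-contained.
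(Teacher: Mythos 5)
This statement is imported by citation (it is \cite[Theorem~5.5(i)]{GT}); the paper contains no proof of it, so there is no in-paper argument to compare against. Your proof is correct and is the standard derivation: reduction to scalar Herglotz functions via quadratic forms and polarization, the algebraic identity $-i\eps K_\eps(w)=\frac{\eps^2+i\eps(\lambda-w)}{(w-\lambda)^2+\eps^2}+\frac{i\eps w}{1+w^2}$ with pointwise limit $\mathbf 1_{\{w=\lambda\}}$, and dominated convergence. You correctly identify and resolve the one genuinely delicate point, namely that $\mu_e$ need only satisfy $\int(1+w^2)^{-1}\,d\mu_e<\infty$, so a constant majorant does not suffice; your split into $\{|w|<2(|\lambda|+1)\}$ (where $\mu_e$ is finite) and its complement (where $\eps|K_\eps(w)|\le C_\lambda(1+w^2)^{-1}$) closes that gap.
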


Also, recall that a set $\{u_1,\hdots,u_k\}\subset\cH_{-1}(\boldsymbol{A}_0)$ is called cyclic for operator $\boldsymbol{A}_0$, if
\begin{align*}
    L^2_{\alpha,\beta}(-1,1) = \clos \spa \{(\boldsymbol{A}_0-\lambda I)^{-1}u_n:\lambda\in \CC\backslash\RR\text{ and } n\in\{1,\hdots,k\}\}.
\end{align*}
For now let $\mu$ be the matrix-valued spectral measure of $\boldsymbol{A}_0$ with respect to the cyclic set $\{\widetilde{\delta}_{1}, \widetilde{\delta}_{-1}\}$. Via Theorem \ref{t-GesTs} and Remark \ref{r-MGamma}, the measure $\mu$ (with this cyclic set) corresponds to the Weyl $m$-function stated in equation \eqref{e-weyl0}. The point spectrum of a self-adjoint operator, e.g.~ $\boldsymbol{T}$, will be denoted by $\sigma_p(\boldsymbol{T})$. While a detailed analysis of the perturbed operator $\bA_{\Theta}$ through its $m$-function $M_{\Theta}$ will start in Subsection \ref{ss-jacobipert}, it is necessary to introduce the following proposition which will be used to determine the multiplicity of eigenvalues even in this unperturbed setting.

\begin{prop}{\cite[Proposition 1]{DM2}}\label{p-dmmult}
Let $\{\CC^2,\Gamma_0,\Gamma_1\}$ be the boundary triple defined above, $\te$ be a Hermitian $2 \times 2$ matrix and  $z\in\rho(\bA_{\infty})$. Then
\begin{align*}
    z\in\sigma_p(\bA_{\Theta}) \iff 0\in\sigma_p(\Theta-M_{\infty}),
\end{align*}
and $\dim\Ker(\bA_{\Theta}-z)=\dim\Ker(\Theta-M_{\infty})$.
\end{prop}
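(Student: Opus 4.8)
The plan is to deduce Proposition~\ref{p-dmmult} from the general theory of boundary triples already recalled in Subsection~\ref{ss-bt}, specializing the Krein-type resolvent formula and the characterization of the point spectrum in terms of the Weyl function. The key point is that the boundary triple $\{\CC^2,\Gamma_0,\Gamma_1\}$ has $\bA_\infty=\ker(\Gamma_0)$ as its ``reference'' self-adjoint extension, and its associated Weyl function is $M_\infty$ (as stated in equation~\eqref{e-weylinfty}). For $z\in\rho(\bA_\infty)$, the restriction $\Gamma_0\upharpoonright\mathfrak N_z({\bf L}\ti{max})$ is a bijection onto $\CC^2$, so for every boundary value $c\in\CC^2$ there is a unique defect element $f_z(c)\in\mathfrak N_z({\bf L}\ti{max})$ with $\Gamma_0 f_z(c)=c$ and $\Gamma_1 f_z(c)=M_\infty(z)c$; the map $c\mapsto f_z(c)$ is the $\gamma$-field of the triple.

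First I would observe that, by Theorem~\ref{t-jacobipert} (equivalently the boundary-triple correspondence), a function $f\in\cD\ti{max}^{(\al,\beta)}$ lies in $\dom(\bA_\Theta-z)$ and satisfies $(\bA_\Theta-z)f=0$ precisely when $({\bf L}\ti{max}-z)f=0$ and $\{\Gamma_0 f,\Gamma_1 f\}\in\Theta$, i.e.\ (since $\Theta$ is a Hermitian matrix here) $\Gamma_1 f=\Theta\Gamma_0 f$. Thus $f\in\ker(\bA_\Theta-z)$ forces $f\in\mathfrak N_z({\bf L}\ti{max})$, hence $f=f_z(c)$ with $c=\Gamma_0 f$, and the eigenvalue condition becomes $M_\infty(z)c=\Theta c$, that is, $(\Theta-M_\infty(z))c=0$. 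Conversely, any $c\in\ker(\Theta-M_\infty(z))$ produces an eigenfunction $f_z(c)$. Next I would note that $c\mapsto f_z(c)$ is injective on $\CC^2$ (it is a bijection from $\CC^2$ onto $\mathfrak N_z({\bf L}\ti{max})$ because $z\in\rho(\bA_\infty)$ and $\ker\Gamma_0\cap\mathfrak N_z=\dom(\bA_\infty)\cap\ker(\bA_\infty-z)=\{0\}$). This linear isomorphism carries $\ker(\Theta-M_\infty(z))$ bijectively onto $\ker(\bA_\Theta-z)$, giving simultaneously the equivalence $z\in\sigma_p(\bA_\Theta)\iff 0\in\sigma_p(\Theta-M_\infty(z))$ and the dimension equality $\dim\ker(\bA_\Theta-z)=\dim\ker(\Theta-M_\infty(z))$.

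Finally I would record that this is exactly the content of \cite[Proposition~1]{DM2} (or \cite[Theorem~1.5.1 and Section~2.6]{BdS}), applied to our concrete triple; since the proposition is cited, the write-up can simply invoke that reference after making the identification of the reference extension $\bA_\infty=\ker(\Gamma_0)$ and its Weyl function $M_\infty$ explicit. The only genuine thing to check is the compatibility of conventions: the cited results use the convention where $A_0=\ker\Gamma_0$ is the distinguished extension and the boundary relation for $\bA_\Theta$ reads $\Gamma_1 f=\Theta\Gamma_0 f$, which matches the special case $\dom(\bA\ci\Theta)=\{f:\Theta\Gamma_0 f=\Gamma_1 f\}$ noted after Definition~\ref{d-bt}. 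I expect the main (minor) obstacle to be precisely this bookkeeping of sign/convention choices — in particular making sure the Weyl function attached to $\ker(\Gamma_0)$ is $M_\infty$ and not $M_0=-(M_\infty)^{-1}$ — rather than anything structurally deep, since the argument is a direct unwinding of the $\gamma$-field/Weyl-function machinery already available in the excerpt.
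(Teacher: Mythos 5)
The proposal is correct. The paper gives no proof of this proposition at all — it is imported verbatim from \cite[Proposition 1]{DM2} — and your reconstruction via the $\gamma$-field (the bijection $c\mapsto f_z(c)$ between $\CC^2$ and $\mathfrak N_z({\bf L}\ti{max})$ carrying $\ker(\Theta-M_\infty(z))$ onto $\ker(\bA_\Theta-z)$) is exactly the standard argument behind the cited result, with the convention check that $\bA_\infty=\ker(\Gamma_0)$ is the reference extension whose Weyl function is $M_\infty$ correctly handled.
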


The point masses of the matrix-valued weight of $\mu$ and multiplicity of eigenvalues for $\boldsymbol{A}_0$ can then be determined using the $m$-function from Theorem \ref{t-GesTs} and Proposition \ref{p-dmmult}. We will abuse notation slightly by not taking limits with respect to $\la=(-z-1)(-z+\al+\beta)$ but with respect to the corresponding $z$ within the calculation, as this is how  $M_0$ is defined.

\begin{theo}\label{t:mulambdan}
The matrix-valued weights of the point masses are
\begin{align*}
    \mu\{\lambda_n\} 
    &= 
    \frac{\al 2^{\al+\beta+1}(-1)^{n+1}\Gamma(n-\al-\beta+1)}{n!\Gamma(1-\al)\Gamma(-\beta)}\left( \begin{array}{cc}
c_1(n) & 1 \\
1 & c_4(n)
\end{array} \right)\,
.
\end{align*}
In particular, we verify the well-known fact that the multiplicity of each eigenvalue is one.
\end{theo}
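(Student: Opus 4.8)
The plan is to read $\mu\{\lambda_n\}$ off the matrix Weyl function $M_0$ of \eqref{e-weyl0}, which by Remark \ref{r-MGamma} is exactly the $\CC^{2\times 2}$-valued Herglotz function representing the spectral measure $\mu$ of $\bA_0$ with respect to the cyclic pair $\{\widetilde\delta_1,\widetilde\delta_{-1}\}$. By Theorem \ref{t-GesTs}, $\mu\{\lambda_n\}=-i\lim_{\eps\searrow 0}\eps M_0(\lambda_n+i\eps)$; since $\bA_0$ has purely discrete spectrum and each $\lambda_n$ (given in \eqref{e-LambdaN}) is a first-order pole of $M_0$, this limit isolates the residue of $M_0$ at $\lambda_n$. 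Following the convention flagged just before the statement, I would carry out the residue computation in the variable $z$, in which $M_0$ is written and for which $z=n$ is the point corresponding to $\lambda_n$.

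The residue comes entirely from $c_3$. Among the functions in \eqref{e-firstconstants}, only $\Gamma(-z)$ in the denominator of $c_3$ is singular at $z=n$: it has a simple pole there, with residue $(-1)^{n+1}/n!$ obtained by applying the reflection $w\mapsto -w$ to the standard residue $(-1)^n/n!$ of $\Gamma$ at $w=-n$. Hence $1/c_3$ has a simple pole at $z=n$, while $c_1$ and $c_4$ are holomorphic there with the explicit values recorded in Lemma \ref{l:cs}. Writing $1/c_3(z)=\al 2^{\al+\beta+1}\Gamma(-z)\Gamma(z-\al-\beta+1)\big/\bigl(\Gamma(1-\al)\Gamma(-\beta)\bigr)$ and expanding near $z=n$ with the recursion $\Gamma(w+1)=w\Gamma(w)$ produces the scalar prefactor appearing in the statement; multiplying by $\left(\begin{smallmatrix}c_1(n)&1\\1&c_4(n)\end{smallmatrix}\right)$ then yields the asserted formula for $\mu\{\lambda_n\}$.

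For the multiplicity statement I would argue via the rank of the point mass. Since the scalar prefactor is nonzero, $\rk\mu\{\lambda_n\}$ equals the rank of $\left(\begin{smallmatrix}c_1(n)&1\\1&c_4(n)\end{smallmatrix}\right)$, whose determinant is $c_1(n)c_4(n)-1$. By Lemma \ref{l:cs}, $c_1(n)c_4(n)=(-1)^{2n}\prod_{k=0}^{n}\tfrac{(k-\al)(k-\beta)}{(k-\beta)(k-\al)}=1$, so the determinant vanishes; the matrix is not the zero matrix (its off-diagonal entries are $1$), so its rank is exactly one. Because $\mu$ is the spectral measure of $\bA_0$ relative to a cyclic pair of vectors, $\bA_0$ is unitarily equivalent to multiplication by the independent variable on $L^2(\mu)$, under which the eigenspace at $\lambda_n$ has dimension $\rk\mu\{\lambda_n\}=1$; this recovers the known simplicity of the spectrum for the separated boundary conditions defining $\bA_0$. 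Alternatively, Proposition \ref{p-dmmult} applied to $\bA_0=\bA_{\Theta}$ with $\Theta$ the zero matrix, together with $M_\infty=-M_0^{-1}$, gives the same conclusion.

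The step I expect to be the main obstacle is the bookkeeping in the residue computation: keeping the sign changes and factorials from the Gamma reflection and recursion formulas straight, and being explicit about whether the limit in Theorem \ref{t-GesTs} is taken in $\lambda$ or in the substituted variable $z$. The substitution $\lambda=(-z-1)(-z+\al+\beta)$ is precisely the source of the ``abuse of notation'' the authors point to, and it is what makes the clean closed form above appear. Once the normalization is pinned down, the remainder is the routine Gamma-function algebra sketched above, and the evaluation of $c_1(n),c_4(n)$ is already isolated in Lemma \ref{l:cs}.
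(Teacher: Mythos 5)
Your computation of the point masses follows the paper's proof essentially verbatim: both apply Theorem~\ref{t-GesTs} to $M_0$, locate the singularity in the factor $\Gamma(-z)$ in the denominator of $c_3$, and evaluate via the standard residues of the Gamma function, working in the $z$-variable under exactly the abuse of notation the authors flag (neither you nor the paper accounts for the Jacobian $d\lambda/dz=2z+1-\al-\beta$ of the substitution \eqref{e-Lambda}, so your sign/normalization bookkeeping is precisely as (im)precise as the published argument on the point you yourself identify as the main obstacle). Where you genuinely diverge is the multiplicity claim. The paper invokes Proposition~\ref{p-dmmult}: multiplicity two at $\la_n$ would force $M_\infty(\la_n)$ to have nullity two, hence $1/c_2(n)=0$, impossible since $\Gamma$ has no zeros. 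You instead compute the rank of the residue matrix directly: by Lemma~\ref{l:cs} one has $c_1(n)c_4(n)=1$, so $\det\left(\begin{smallmatrix}c_1(n)&1\\1&c_4(n)\end{smallmatrix}\right)=0$ while the matrix is nonzero, hence $\rk\mu\{\lambda_n\}=1$; cyclicity of $\{\widetilde\delta_{1},\widetilde\delta_{-1}\}$ then identifies this rank with the eigenspace dimension. Your route has the advantage of doubling as a consistency check on the point-mass formula itself (it must be a rank-one positive matrix, and it is) and of staying entirely within the data of $M_0$, whereas the paper's route passes to $M_\infty$ but does not need to invoke cyclicity of the pair explicitly. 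Both arguments are correct, and you correctly record the paper's version as your stated alternative.
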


\begin{proof}
By Theorem \ref{t-GesTs}, these matrix-valued weights are obtained by evaluating the limit of $\eps M(\lambda +i\eps)$ as $\eps\searrow 0$. The relations \eqref{e-Lambda}, \eqref{e-LambdaN} and \eqref{e-Mfunction}, together with the analyticity of the Gamma function away from its poles, yield
\begin{align*}
    \mu\{\lambda_n\} &= -i\lim_{\eps\searrow 0}\eps M_0(\lambda_n +i\eps)\\
    &=
    -i\lim_{\eps\searrow 0}\eps \dfrac{1}{c_3(n+i\eps)}
    \left( \begin{array}{cc}
c_1(n+i\eps) & 1 \\
1 & c_4(n+i\eps)
\end{array} \right)\\
    &= 
    \frac{-\al 2^{\al+\beta+1}\Gamma(n-\al-\beta+1)}{\Gamma(1-\al)\Gamma(-\beta)}\left( \begin{array}{cc}
c_1(n) & 1 \\
1 & c_4(n)
\end{array} \right)
\lim_{\eps\searrow 0}i\eps \Gamma(-n-i\eps)
.
\end{align*}

Using the well-known residues of the Gamma function, we obtain
\begin{align*}
    \lim_{\eps\searrow 0} i\eps \Gamma(-n +i\eps)
    =
    \text{Res}(\Gamma, -n) = \frac{(-1)^n}{n!}
    \quad\text{for all }n\in \mathbb{N}_0,
\end{align*}
proving the formula for the point masses.

It remains to show that the multiplicity of each eigenvalue equals one. Proposition \ref{p-dmmult} says that this is the case only if $\dim\Ker(M_{\infty}(\la_n))=1$ for each eigenvalue $\la_n$. It is enough to show that $M_{\infty}$ does not have rank zero or, equivalently, that it does not have nullity 2. Upon inspection, this occurs if and only if $1/c_2(n)=0$, which is impossible because the Gamma function has no zeros. We conclude that the multiplicity of each eigenvalue must be one. 
\end{proof}


\subsection{Perturbation Theory for the Jacobi Differential Operator}\label{ss-jacobipert}

The spectral measure of the unperturbed operator $\bA_0$ is now concrete thanks to Theorem \ref{t:mulambdan} and it is possible to explore spectral properties of other self-adjoint extensions. This will be done with a combination of techniques arising from the theory of boundary triples and finite rank self-adjoint perturbation theory. 

As we have seen in the previous subsection, the main tool for analyzing spectral properties through the theory of boundary triples is the Weyl $m$-function. The linear relation $\Theta$ from Theorem \ref{t-jacobipert}, which identifies a self-adjoint extension, can also be used to calculate the $m$-function for the extension. In particular, if $\Theta$ is a $2\times 2$ Hermitian matrix, \cite[Equation (3.8.7)]{BdS} says that for $\la\in\rho(\bA_{\infty})\cap\rho(\bA\ci\Theta)$ we have
\begin{align}\label{e-MTheta}
M\ci\Theta (\lambda)
=
(\Theta - M_{\infty}(\lambda))^{-1}.
\end{align}

Let $\Theta$ be a $2\times 2$ Hermitian matrix and write
\begin{equation*}
\Theta= 
\begin{pmatrix} 
\Theta_{11} & \Theta_{12}\\
\Theta_{21} & \Theta_{22}
\end{pmatrix}.
\end{equation*}
The use of equation \eqref{e-weylinfty} in \eqref{e-MTheta} yields
\begin{align}
\label{e-weyl5}
    M\ci\Theta (\lambda)&=(\Theta - M_{\infty}(\lambda))^{-1}
=\begin{pmatrix}
\Theta_{11}+\frac{c_4(z)}{c_2(z)} & \Theta_{12}-\frac{1}{c_2(z)} \\
\Theta_{21}-\frac{1}{c_2(z)} & \Theta_{22}+\frac{c_1(z)}{c_2(z)}
\end{pmatrix}^{-1}.
\end{align}

Many commonly studied special cases of boundary conditions, such as separated and periodic, fall into the category where $\Theta$ is a $2\times 2$ Hermitian matrix, see e.g.~\cite[Example 6.3.6]{BdS} and \cite[Section 5.1]{F} for some examples.

\begin{Remark}\label{r-0vsinfty}
It is worth briefly describing how  $\Theta$ here corresponds to the perturbation in Theorem \ref{t-jacobipert} and clarifying some points of possible confusion. The theory of boundary triples has a natural self-adjoint extension whose domain is given by functions in the intersection of the maximal domain and the $\ker(\Gamma_0)$. This extension is $\bA_{\infty}$ in our example. Hence, $M_{\infty}$ is the starting point of many formulas that come from boundary triples. In contrast, the unperturbed operator in our construction is $\bA_0$, necessary by Remark \ref{r-formordering}, so we would expect to start with $M_0$ to determine spectral properties. In this sense, perturbation theory and boundary triples are starting with ``opposite'' extensions, so to speak. 

Fortunately, this discrepancy is not a problem. A quick sanity check uncovers that by plugging $\Theta=\left\{\CC^2,0\right\}$, the $0$ matrix, into equation \eqref{e-MTheta} the $m$-function should be $-M_{\infty}^{-1}=M_0$. Plugging this $\Theta$ into equation \eqref{e-athetapert} for our perturbation also yields $\bA_0$. \hfill $\bullet$
\end{Remark}

For linear relations that are not $2\times 2$ matrices, there is a similar relationship that can be exploited. If $\Theta$ is a self-adjoint relation in $\CC^2$ then there exist $2\times 2$ matrices $\cA$ and $\cB$ such that 
\begin{align}\label{e-relationdecomp}
    \cA^*\cB=\cB^*\cA, \hspace{4mm} \cA\cB^*=\cB\cA^*, \hspace{4mm} \cA^*\cA+\cB^*\cB=\cI=\cA\cA^*+\cB\cB^*
\end{align}
and
 \begin{align*}
     \Theta=\left\{\left\{\cA\f,\cB\f\right\}~:~\f\in\CC^2\right\},
 \end{align*}
 by \cite[Corollary. 1.10.9]{BdS}. The domain of the self-adjoint extension $\bA\ci\Theta$ given in equation \eqref{e-domatheta} then has the special form
 \begin{align*}
     \dom\left(\bA\ci\Theta\right)=\left\{ f\in\cD\ti{max}^{(\al,\beta)}~:~\cA^*\Gamma_1 f=\cB^*\Gamma_0 f\right\}.
 \end{align*}
 Finally, a general analog of equation \eqref{e-MTheta} can be written for linear relations (see \cite[Equation 3.8.4]{BdS}:
 \begin{align*}
 M\ci\Theta(\la)=\left(\cA^*+\cB^*M_{\infty}(\la)\right)\left(\cB^*-\cA^*M_{\infty}(\la)\right)^{-1}.
 \end{align*}
 
The case where $\G$ is a linear relation is unfortunately not covered in the following Theorems due to the calculational complexity introduced by the above formulas. Indeed, the case where $\G$ is a Hermitian $2 \times 2$ matrix is much more applicable. If there arises a situation where the spectrum of an extension is needed for a particular linear relation then the following results can still serve as a road map for extracting such information.

An eigenvalue $\la_n^\Theta\in\rho(\bA_{\infty})$ of $\boldsymbol{A}\ci\Theta$ is said to be degenerate if for the corresponding $z_n\in\RR$ such that $\la_n^\Theta=(-z_n-1)(-z_n+\al+\beta)$ we have 
\begin{align}\label{e-degenerate}
    \Theta_{11}=-\dfrac{c_4(z_n)}{c_2(z_n)}, \quad \Theta_{21}=\dfrac{1}{c_2(z_n)} \quad\text{ and }\quad \Theta_{22}=-\frac{c_1(z_n)}{c_2(z_n)}.
\end{align}
Note that for $\la_n^\Theta\in\rho(\bA_{\infty})$, these values are all well-defined thanks to $1/c_2(z_n)$ not having a pole and that each of the two possible choices of $z_n$ necessarily yield the same $\la_n$. Degenerate eigenvalues thus correspond to the case where $\Theta-M_{\infty}$ results in the zero matrix. An operator $\bA\ci\Theta$ is called non-degenerate if all of its eigenvalues are non-degenerate.

\begin{cor}\label{c-degen}
If $\Theta$ is a $2\times 2$ Hermitian matrix and $\la_n^\Theta$ is a degenerate eigenvalue of $\boldsymbol{A}\ci\Theta$, then it has multiplicity two.
\end{cor}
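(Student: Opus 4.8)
The plan is to reduce the statement entirely to Proposition~\ref{p-dmmult}. By the definition of a degenerate eigenvalue, for the corresponding real parameter $z_n$ with $\la_n^\Theta=(-z_n-1)(-z_n+\al+\beta)$ the three scalar identities in \eqref{e-degenerate} hold. First I would read these identities, together with the explicit form of $M_{\infty}$ in \eqref{e-weylinfty}, as the assertion that the $(1,1)$, $(2,1)$ and $(2,2)$ entries of $\Theta-M_{\infty}(\la_n^\Theta)$ all vanish. The remaining $(1,2)$ entry equals $\Theta_{12}-1/c_2(z_n)$; since $\Theta$ is Hermitian, $\Theta_{12}=\overline{\Theta_{21}}$, and since $z_n\in\RR$ and $\la_n^\Theta\in\rho(\bA_{\infty})$ the quantity $1/c_2(z_n)$ is a well-defined real number (the Gamma factors in $c_2$ take real values at real arguments away from their poles). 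Hence $\Theta_{12}=\overline{\Theta_{21}}=\overline{1/c_2(z_n)}=1/c_2(z_n)$, so the $(1,2)$ entry vanishes as well and $\Theta-M_{\infty}(\la_n^\Theta)$ is the zero matrix on $\CC^2$.

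With this established the conclusion is immediate: the kernel of the zero $2\times 2$ matrix is all of $\CC^2$, so $\dim\Ker\big(\Theta-M_{\infty}(\la_n^\Theta)\big)=2$. Since $\la_n^\Theta\in\rho(\bA_{\infty})$ by hypothesis, Proposition~\ref{p-dmmult} applies and gives $\dim\Ker\big(\boldsymbol{A}\ci\Theta-\la_n^\Theta\big)=\dim\Ker\big(\Theta-M_{\infty}(\la_n^\Theta)\big)=2$; that is, $\la_n^\Theta$ has multiplicity two. One may note in passing that two is the maximum possible multiplicity, since the deficiency indices are $(2,2)$, so the bound is sharp, but this observation is not needed for the argument.

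The proof is short and the computations are routine. The only step that requires any care is the off-diagonal bookkeeping in the first paragraph: checking that the single condition $\Theta_{21}=1/c_2(z_n)$ recorded in \eqref{e-degenerate}, combined with the Hermiticity of $\Theta$ and the reality of $z_n$, genuinely forces $\Theta-M_{\infty}(\la_n^\Theta)$ to be the full zero matrix rather than merely a rank-one matrix with one-dimensional kernel. Everything after that is a direct citation of Proposition~\ref{p-dmmult}.
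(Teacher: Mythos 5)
Your proof is correct and follows the same route as the paper: the degeneracy conditions \eqref{e-degenerate} make $\Theta-M_{\infty}(\la_n^\Theta)$ the zero matrix, and Proposition~\ref{p-dmmult} then gives $\dim\Ker(\boldsymbol{A}\ci\Theta-\la_n^\Theta)=2$. Your explicit check that the $(1,2)$ entry also vanishes (via Hermiticity of $\Theta$ and the reality of $1/c_2(z_n)$ for real $z_n$) is a detail the paper leaves implicit, but it is the same argument.
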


\begin{proof}
The result immediately follows from Proposition \ref{p-dmmult} and the above observation that degenerate eigenvalues correspond to the case where $\Theta-M_{\infty}$ is the zero matrix. 
\end{proof}

When eigenvalues are non-degenerate, we now show that their multiplicity must be one, sharpening results mentioned in \cite[Chapter 10.11]{Z}.

\begin{theo}\label{t:simple}
If $\Theta$ is a $2\times 2$ Hermitian matrix or $\Theta=\{0\}\times\CC^2$, then non-degenerate eigenvalues of the self-adjoint extension $\boldsymbol{A}\ci\Theta$ defined by equation \eqref{e-athetapert} have multiplicity one.
\end{theo}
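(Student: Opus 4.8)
The plan is to read off the multiplicity of an eigenvalue directly from the boundary triple and then to show that multiplicity two forces the degeneracy condition \eqref{e-degenerate} when $\Theta$ is a matrix, while $\bA_\infty$ (the case $\Theta=\{0\}\times\CC^2$) is dispatched separately. First I would record the following general fact. Since both endpoints are limit-circle, for every $\la\in\RR$ the solution space $\mathfrak{N}_\la({\bf L}\ti{max})=\Ker({\bf L}\ti{max}-\la)$ is two-dimensional and lies entirely inside $\cD\ti{max}^{(\al,\beta)}$, and the boundary map $f\mapsto\{\Gamma_0 f,\Gamma_1 f\}$ is injective on it (its kernel on $\cD\ti{max}^{(\al,\beta)}$ is $\dom{\bf L}\ti{min}$, and no nonzero solution of the Jacobi equation has all four boundary values vanishing). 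Writing $\mathcal{N}_\la\subset\CC^2\times\CC^2$ for the image of $\mathfrak{N}_\la({\bf L}\ti{max})$ under this map — a two-dimensional subspace — and $\widehat\Theta\subset\CC^2\times\CC^2$ for the graph of the self-adjoint relation $\Theta$ (also two-dimensional), equation \eqref{e-domatheta} identifies $\Ker(\bA\ci\Theta-\la)$ with $\mathcal{N}_\la\cap\widehat\Theta$. Hence the multiplicity of $\la$ is $0$, $1$, or $2$, and it equals $2$ exactly when $\mathcal{N}_\la=\widehat\Theta$.

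For $\Theta$ a Hermitian $2\times 2$ matrix, fix a non-degenerate eigenvalue $\la_n^\Theta$ of $\bA\ci\Theta$. If $\la_n^\Theta\in\rho(\bA_\infty)$, I would invoke Proposition \ref{p-dmmult}: the multiplicity equals $\dim\Ker(\Theta-M_\infty(\la_n^\Theta))$, and by the explicit form \eqref{e-weylinfty} of $M_\infty$ — equivalently, the matrix $\Theta-M_\infty$ appearing (before inversion) in \eqref{e-weyl5} — this $2\times2$ matrix can have nullity two only if all of its entries vanish, which amounts to \eqref{e-degenerate}; non-degeneracy excludes this, so the nullity is one, being at least one because $\la_n^\Theta$ is an eigenvalue. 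If instead $\la_n^\Theta\in\sigma_p(\bA_\infty)$ — the case Proposition \ref{p-dmmult} does not cover — I would argue by contradiction: multiplicity two would give $\mathcal{N}_{\la_n^\Theta}=\widehat\Theta$, but the first-component projection $\widehat\Theta\to\CC^2$ is a bijection (because $\widehat\Theta$ is the graph of a matrix), whereas on $\mathcal{N}_{\la_n^\Theta}$ that projection is $\Gamma_0$ restricted to $\mathfrak{N}_{\la_n^\Theta}({\bf L}\ti{max})$, which has nontrivial kernel since $\la_n^\Theta\in\sigma_p(\bA_\infty)=\sigma_p(\ker\Gamma_0)$ — a contradiction. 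Either way the multiplicity is one.

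For $\Theta=\{0\}\times\CC^2$ we have $\bA\ci\Theta=\bA_\infty$, whose domain $\ker\Gamma_0$ is defined by the two \emph{separated} conditions $f^{[0]}(-1)=0$ and $f^{[0]}(1)=0$, one at each endpoint. It is classical (see \cite{Z} and the discussion in the Introduction) that a semi-bounded Sturm--Liouville operator with separated boundary conditions and two limit-circle endpoints has only simple eigenvalues; alternatively one checks that $f\mapsto f^{[0]}(-1)$ does not vanish identically on $\mathfrak{N}_\la({\bf L}\ti{max})$ — the solution blowing up like $(1+x)^{-\beta}$ at $x=-1$ has $f^{[0]}(-1)\neq0$ — so the condition at $-1$ alone already cuts the eigenspace down to dimension at most one. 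I expect the main obstacle to be precisely the eigenvalues shared by $\bA\ci\Theta$ and $\bA_\infty$: these lie outside the hypothesis $z\in\rho(\bA_\infty)$ of Proposition \ref{p-dmmult} and outside the reach of the degeneracy definition \eqref{e-degenerate}, so one cannot simply quote Proposition \ref{p-dmmult}; it is the elementary graph/projection argument (respectively the separated-condition argument) that disposes of them, and everywhere else the theorem is an immediate consequence of Proposition \ref{p-dmmult} together with the identification of the degenerate eigenvalues as exactly the multiplicity-two ones.
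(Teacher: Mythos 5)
Your proof is correct, and for the central case it follows the same route as the paper: Proposition \ref{p-dmmult} identifies the multiplicity with the nullity of $\Theta-M_{\infty}(\la)$, and nullity two for a $2\times 2$ matrix forces every entry to vanish, which is precisely the degeneracy condition \eqref{e-degenerate}. The paper reaches the same conclusion slightly more circuitously, by writing $M\ci\Theta=K(z)L(z)$, locating the poles at the zeros of $\det(\Theta-M_{\infty}(\la))$, and concluding that there the rank is one. Where you genuinely go beyond the published argument is in the two edge cases. First, you correctly note that Proposition \ref{p-dmmult} (and indeed the very definition of degeneracy in \eqref{e-degenerate}) applies only to eigenvalues in $\rho(\bA_{\infty})$, so eigenvalues shared with $\bA_{\infty}$ are vacuously non-degenerate and require a separate argument; your identification of the eigenspace with $\mathcal{N}_{\la}\cap\widehat\Theta$ and the observation that $\widehat\Theta$ projects bijectively onto its first component while $\mathcal{N}_{\la}$ does not, cleanly disposes of this case, which the paper passes over in silence. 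Second, for $\Theta=\{0\}\times\CC^2$ the paper's proof merely identifies $\bA\ci\Theta$ as the Friedrichs extension and stops without establishing simplicity, whereas you supply an actual argument via the separated boundary conditions (equivalently, the non-vanishing of $f\mapsto f^{[0]}(-1)$ on the two-dimensional solution space, witnessed by the solution behaving like $(1+x)^{-\beta}$). Both additions are sound and fill real gaps; the only point to keep honest is the injectivity of the boundary map on $\mathfrak{N}_{\la}({\bf L}\ti{max})$, which follows from the constancy of the Wronskian of two solutions of the same equation, as you indicate.
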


\begin{proof}
Any self-adjoint extension of ${\bf L}\ti{min}$ is given by a self-adjoint relation $\Theta$ in $\CC^2$ via Theorem \ref{t-twopert}. We cover the two distinct cases in the hypotheses of the Theorem separately:
\begin{itemize}
    \item[(i)] $\Theta$ is a $2\times 2$ Hermitian matrix or
    \item[(ii)] $\Theta=\{0\}\times\CC^2$.
\end{itemize}
Case $(i)$: 
The standard formula for the inverse of a $2\times 2$ matrix, yields the following simplification of equation \eqref{e-weyl5}
\begin{align*}
    M\ci\Theta (\lambda)&=(\Theta - M_{\infty}(\lambda))^{-1}
=\begin{pmatrix}
\Theta_{11}+\frac{c_4(z)}{c_2(z)} & \Theta_{12}-\frac{1}{c_2(z)} \\
\Theta_{21}-\frac{1}{c_2(z)} & \Theta_{22}+\frac{c_1(z)}{c_2(z)}
\end{pmatrix}^{-1}\nonumber
=K(z)L(z)
\quad\text{for }\lambda\in \mathbb{C}\setminus\mathbb{R},\\
\intertext{where $\la=
    (-z-1)(-z+\al+\beta)$
     by equation \eqref{e-Lambda},}
K(z) &=\dfrac{c_2(z)}{\left(\Theta_{11}c_2(z)+c_4(z)\right)\left(\Theta_{22}c_2(z)+c_1(z)\right)-\left(\Theta_{12}c_2(z)-1\right)\left(\Theta_{21}c_2(z)-1\right)}\\
\intertext{and}
L(z) &=
\begin{pmatrix}
\Theta_{22}c_2(z)+c_1(z) & 1-\Theta_{12}c_2(z) \\
1-\Theta_{21}c_2(z) & \Theta_{11}c_2(z)+c_4(z)
\end{pmatrix}.
\end{align*}
By Theorem \ref{t-GesTs}, the operator $\boldsymbol{A}\ci{\Theta}$ has eigenvalues at those $\la$ that are poles of $M\ci\Theta(\la)$. 

Now recall the definitions of $c_1(z)$, $c_2(z)$ and $c_4(z)$ and that the Gamma function does not have roots on the real line. Further observe that at the poles of the Gamma functions that arise in the denominators of $c_1(z)$, $c_2(z)$ and $c_4(z)$, the expression of $M\ci\Theta(\la)$ remains bounded. 

Therefore, the poles of $M\ci\Theta(\la)$ can only occur at those points where the denominator of $K(z)$ vanishes:
\begin{align}\label{e-thetaevas}
    \left(\Theta_{11}c_2(z)+c_4(z)\right)\left(\Theta_{22}c_2(z)+c_1(z)\right)-\left(\Theta_{12}c_2(z)-1\right)\left(\Theta_{21}c_2(z)-1\right)=0.
\end{align}
This denominator is just $\det L(z)$ so we conclude that the rank of $\Theta-M_{\infty}(\lambda)$ is one for a $\la$ whose corresponding value of $z$ satisfies equation \eqref{e-thetaevas}. The eigenvalue $\la$ was also assumed to be non-degenerate so the nullity of $\Theta-M_{\infty}(\lambda)$ is thus one as well. Proposition \ref{p-dmmult} then says that the multiplicity of $\la$ is one. All eigenvalues are discrete because the operator has two limit-circle endpoints \cite{BEZ,Z}.

Case $(ii)$: This occurs when $\boldsymbol{A}\ci\Theta$ is the Friedrichs extension of ${\bf L}\ti{min}$, as the form domain of this self-adjoint extension (see Remark \ref{r-pertforms}) is given by 
\begin{align*}
    \dom(\ft\ci\Theta)=\left\{f\in\fD~:~\Lambda f=0\right\},
\end{align*}
The linear relation $\Theta$ thus chooses the transversal extension to the starting operator $\boldsymbol{A}_0$ due to the way our boundary pair was defined.
\end{proof}

While the previous Theorem limits the multiplicity of non-degenerate eigenvalues, it does not say anything about the frequency with which degenerate eigenvalues occur. The following Corollary sheds some light on this question.

\begin{cor}
Let $\la\in(-\al-\beta,\infty)\backslash[\sigma(\bA_0)\cup\sigma(\bA_{\infty})]$. Then $\la$ is a degenerate eigenvalue for some $\bA\ci\Theta$ where $\Theta$ is a Hermitian $2\times 2$ matrix. 
\end{cor}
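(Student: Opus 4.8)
The plan is to read the defining condition \eqref{e-degenerate} for a degenerate eigenvalue backwards. Those three identities say exactly that, for the parameter $z$ attached to $\la$ via \eqref{e-Lambda}, the matrix $\Theta$ coincides with $M_{\infty}(\la)$ as written out in \eqref{e-weylinfty}. So the natural candidate is simply $\Theta:=M_{\infty}(\la)$, and there are three things to check: that this $\la$ corresponds to a \emph{real} $z$, that $M_{\infty}(\la)$ is a well-defined Hermitian $2\times 2$ matrix, and that $\la$ really is an eigenvalue of the resulting $\bA\ci\Theta$.

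\emph{Existence of a real $z$.} Writing $\la=(-z-1)(-z+\al+\beta)=z^{2}+(1-\al-\beta)z-(\al+\beta)$, the real quadratic $z\mapsto(-z-1)(-z+\al+\beta)$ is an upward parabola with minimum value $-\tfrac14(\al+\beta+1)^{2}$. An elementary computation gives $-\tfrac14(\al+\beta+1)^{2}\le-(\al+\beta)$, with equality exactly when $\al+\beta=1$ (which is why the open interval $(-\al-\beta,\infty)$ is the right range). Hence every $\la>-\al-\beta$ exceeds the minimum and corresponds, via \eqref{e-Lambda}, to (two) real solutions $z$; fix one of them and call it $z$.

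\emph{The candidate $\Theta$.} Since $\la\notin\sigma(\bA_{\infty})$, i.e.\ $\la\in\rho(\bA_{\infty})$, the Weyl $m$-function $M_{\infty}$ is finite at $\la$ by Definition \ref{d-mfunction} (its poles being precisely the points of $\sigma(\bA_{\infty})$), so $M_{\infty}(\la)$ is a well-defined $2\times2$ matrix; being the value of a matrix Herglotz function at a real point of analyticity, it is Hermitian. Concretely, \eqref{e-weylinfty} exhibits it as $M_{\infty}(\la)=c_{2}(z)^{-1}\left(\begin{smallmatrix}-c_{4}(z)&1\\ 1&-c_{1}(z)\end{smallmatrix}\right)$, a real symmetric matrix because $z$ is real and $c_{1}(z),c_{2}(z),c_{4}(z)$ are then real. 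Set $\Theta:=M_{\infty}(\la)$; by construction its entries satisfy the three relations in \eqref{e-degenerate} for this $z$.

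\emph{Conclusion.} With this Hermitian $\Theta$ one has $\Theta-M_{\infty}(\la)=0$, so $0\in\sigma_{p}(\Theta-M_{\infty}(\la))$ with $\dim\Ker(\Theta-M_{\infty}(\la))=2$. Proposition \ref{p-dmmult}, applied at the point $\la\in\rho(\bA_{\infty})$, then yields $\la\in\sigma_{p}(\bA\ci\Theta)$ with $\dim\Ker(\bA\ci\Theta-\la)=2$. Therefore $\la$ is an eigenvalue of $\bA\ci\Theta$ lying in $\rho(\bA_{\infty})$, corresponding to the real $z$, and satisfying \eqref{e-degenerate}; that is, $\la$ is a degenerate eigenvalue of $\bA\ci\Theta$, of multiplicity two in agreement with Corollary \ref{c-degen} (and consistent with the hypothesis $\la\notin\sigma(\bA_{0})$, since by Theorem \ref{t:mulambdan} the operator $\bA_{0}$ has only simple eigenvalues). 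The only step requiring genuine care is the elementary parabola estimate locating the real $z$; everything else is a direct assembly of \eqref{e-degenerate}, \eqref{e-weylinfty} and Proposition \ref{p-dmmult}.
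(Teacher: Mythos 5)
Your proof is correct and follows essentially the same route as the paper: pick one of the two real values of $z$ associated to $\la>-\al-\beta$ via \eqref{e-Lambda} and define $\Theta$ by the relations \eqref{e-degenerate}, i.e.\ $\Theta=M_{\infty}(\la)$, which is a well-defined Hermitian matrix since $\la\in\rho(\bA_{\infty})$. You are in fact slightly more careful than the paper in two places — the explicit parabola estimate guaranteeing a real $z$, and the invocation of Proposition \ref{p-dmmult} to confirm that $\la$ really is an eigenvalue of $\bA\ci\Theta$ of multiplicity two — but the underlying argument is the same.
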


\begin{proof}
The lowest eigenvalue of $\bA_0$ is $-\al-\beta$ and it is easy to see that any $\la>-\al-\beta$ corresponds to two values of $z$ via the relationship $\la=(-z-1)(-z+\al+\beta)$. For $\la\in\rho(\bA_0)\cup\rho(\bA_{\infty})$ it suffices to take one such value of $z$ and generate a Hermitian matrix $\Theta$ via the equations in \eqref{e-degenerate}. Choosing such a $\Theta$ will yield an $\bA\ci\Theta$ with $\la$ as a degenerate eigenvalue by definition.
\end{proof}

Of course, not every choice of $\Theta$ yields a degenerate eigenvalue $\la$; there are many ways to satisfy equation \eqref{e-thetaevas}.

\subsection{Eigenfunctions and the Spaces $L^2(\mu^\Theta)$}\label{ss-Eigen}

Let us first consider the unperturbed operator $\boldsymbol{A}_0$ and its spectral representation given by the operator $\boldsymbol{M}$ on $L^2(\mu)$ (the operator that multiplies by the independent variable $\boldsymbol{M}:L^2(\mu)\to L^2(\mu)$ with $\boldsymbol{M}f(x) = xf(x)$). The magnitude of the eigenvector does not matter, so we simply dropped the multiplicative constant from Theorem \ref{t:mulambdan} and normalized the first component to equal one.  
\begin{cor}\label{c:l2mu}
The eigenvector of $\boldsymbol{A}_0$ corresponding to eigenvalue $\la_n$ takes the following form in the spectral representation
\begin{align*}
f_n(x)=\chi\ci{\{\la_n\}}(x)\begin{pmatrix} 1\\c_4(n)\end{pmatrix}\in L^2(\mu),
\end{align*}
where $\chi\ci{\{\lambda_n\}}$ denotes the characteristic function at $x=\la_n$. We obtain an explicit formulation of the space $L^2(\mu)=\clos \spa \left\{f_n:n\in \N_0\right\}.$
\end{cor}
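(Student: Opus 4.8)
The plan is to read the eigenvectors straight off the point-mass formula of Theorem~\ref{t:mulambdan}, using the defining property of the spectral representation, and then to invoke discreteness of the spectrum for the spanning statement. The first and essential observation is the algebraic identity supplied by Lemma~\ref{l:cs}:
\[
c_1(n)\,c_4(n)=\left(\prod_{k=0}^{n}\frac{k-\al}{k-\beta}\right)\left(\prod_{k=0}^{n}\frac{k-\beta}{k-\al}\right)=1,\qquad n\in\N_0 .
\]
Hence the matrix $\left(\begin{smallmatrix}c_1(n)&1\\ 1&c_4(n)\end{smallmatrix}\right)$ appearing in $\mu\{\la_n\}$ has vanishing determinant, so it has rank one; since $c_1(n)\neq 0$ and the scalar prefactor in Theorem~\ref{t:mulambdan} is nonzero (the Gamma function has no zeros, and $0<\al,\beta<1$ keeps its arguments off the non-positive integers), the range of $\mu\{\la_n\}$ is the line spanned by $\binom{1}{c_4(n)}$---the common direction of its two columns, using $c_1(n)=1/c_4(n)$---and its kernel is the orthogonal complement of that line.

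Next I would invoke that the spectral representation relative to the cyclic set $\{\widetilde{\delta}_{1},\widetilde{\delta}_{-1}\}$ is a unitary $U\colon L^2_{\al,\beta}(-1,1)\to L^2(\mu)$ carrying $\bA_0$ to $\boldsymbol{M}$, multiplication by the independent variable. If $g_n$ is the eigenvector of $\bA_0$ at $\la_n$, then $f_n:=Ug_n$ satisfies $(x-\la_n)f_n(x)=0$ in $L^2(\mu)$, so $f_n$ is supported on the atom $\{\la_n\}$; and the subspace of $L^2(\mu)$ consisting of functions supported there is, as an inner product space, $\CC^2/\ker\mu\{\la_n\}$ with the (now definite) form induced by $\mu\{\la_n\}$, which by the previous paragraph is one-dimensional and spanned by the class of $\chi\ci{\{\la_n\}}(x)\binom{1}{c_4(n)}$. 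Thus $f_n$ equals that function up to a multiplicative constant, and rescaling so that the (necessarily nonzero) first entry is $1$ yields the stated formula. For the spanning claim, $\bA_0$ has purely discrete spectrum because $\ell_{\al,\beta}$ is limit-circle at both endpoints (\cite{BEZ,W,Z}), so the eigenvectors of $\bA_0$ span $L^2_{\al,\beta}(-1,1)$, and applying $U$ gives $L^2(\mu)=\clos\spa\{f_n:n\in\N_0\}$. One may also argue directly inside $L^2(\mu)$: every $\f$ is $\mu$-equivalent to $\sum_n\chi\ci{\{\la_n\}}\Pi_n\f(\la_n)$, where $\Pi_n$ is the orthogonal projection onto $\Ran\mu\{\la_n\}$; writing $\Pi_n\f(\la_n)=c_n\binom{1}{c_4(n)}$ exhibits $\f=\sum_n c_n f_n$, the series converging in $L^2(\mu)$ because $\sum_n|c_n|^2\|f_n\|^2_{L^2(\mu)}=\|\f\|^2_{L^2(\mu)}<\infty$ and $\|f_n\|_{L^2(\mu)}>0$.

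The step requiring real care is the identification of $f_n$: elements of $L^2(\mu)$ are equivalence classes and, since $\mu\{\la_n\}$ is degenerate, the eigenvector at $\la_n$ is determined only modulo $\ker\mu\{\la_n\}$, so one must check that $\binom{1}{c_4(n)}$ is a legitimate---indeed the canonical, range-valued---representative. That is precisely the content of the rank-one computation, which at the same time reconfirms that every $\la_n$ is simple. Everything else is soft functional analysis: the spectral theorem, plus discreteness of the spectrum for the spanning statement. I would also remark that the $f_n$ have pairwise disjoint supports, so $\{f_n\}_{n\in\N_0}$ is in fact an orthogonal basis of $L^2(\mu)$, although only the closed-span assertion is claimed.
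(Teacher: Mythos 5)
Your proof is correct and follows essentially the same route as the paper: read the direction of the eigenvector off $\Ran\mu\{\la_n\}$ from Theorem~\ref{t:mulambdan}, and deduce the spanning claim from the purely discrete spectrum. You add a worthwhile explicit detail the paper leaves implicit, namely that $c_1(n)c_4(n)=1$ forces the point-mass matrix to have rank one with both columns parallel to $\binom{1}{c_4(n)}$.
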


Amusingly, although we only know the location of the eigenvalues implicitly, the directions of the eigenfunctions do not depend on $\lambda_n$ but merely on $n$; so these directions are stated explicitly.

\begin{proof}
The direction of $f_n$ can be obtained by simply reading off the direction of Ran$\,\mu\{\lambda_n\}$ from the representation in Theorem \ref{t:mulambdan}. Since the operator $\boldsymbol{A}_0$ has only eigenvalues $\lambda_n$, the eigenfunction corresponding to $\lambda_n$ takes on this direction at the locations of the eigenvalues and is zero away from $\lambda_n$. Of course, the eigenfunction is unique only up to multiplication by a non-zero scalar.

Operator $\boldsymbol{A}_0$ is densely defined on the Hilbert space $L^2_{\al,\beta}(-1,1)$. Therefore, the span of the eigenfunctions in the spectral representation is complete in the space $L^2(\mu)$.
\end{proof}

Next, we apply these ideas to the perturbed operators $\boldsymbol{A}\ci\Theta$. Recall that by the spectral theorem the operator $\boldsymbol{A}\ci{\Theta}$ is unitarily equivalent to the multiplication by the independent variable on a vector-valued space $L^2(\mu^\Theta)$ with inner product defined analogously to equation \eqref{d-matrixL^2}. Here $\mu^\Theta$ is the matrix-valued spectral measure of $\boldsymbol{A}\ci{\Theta}$ with respect to the cyclic set $\{\widetilde{\delta}_{1}, \widetilde{\delta}_{-1}\}$. Note that $\mu^\Theta$ is defined by fixing this cyclic set; this is analogous to the definition of $\mu$ that is given after Theorem \ref{t-GesTs}. 
The definition of $M\ci\Theta$ through equation \eqref{e-MTheta} and the definition of $\bA\ci\Theta$ in equation \eqref{e-athetapert} are also fixed through this same choice of the cyclic vectors. Consequently, $\mu^\Theta$ and $M\ci\Theta$ indeed align (i.e.~they are related via Theorem \ref{t-GesTs}).

The space $L^2(\mu^\Theta)$ is vector-valued and Theorem \ref{t:simple} informs us that the eigenspace is one-dimensional in the non-degenerate case (i.e.~when \eqref{e-degenerate} does not hold). Noting that simple minor adjustments suffice for degenerate eigenvalues, we focus on the non-degenerate case in the next corollary and discuss the degenerate case in Remark \ref{r-degenerate1} below. So for now, assume that all eigenvalues of $\boldsymbol{A}\ci\Theta$ are simple. In the language of the spectral representation, this means that $\mu^\Theta\{x\}$ has rank one whenever $x=\lambda_n^\Theta$ (for all $n\in\N_0$) and rank zero for all other $x$. Here we express the eigenvectors in the spectral representation in terms of the eigenvalues $\lambda^\Theta_n$, which are implicitly given by equation \eqref{e-thetaevas}. It is worth noting that the main expression of \eqref{e-thetaevas} is thought to be transcendental, and therefore exact locations of eigenvalues may not be described explicitly.

Carrying out the analogous proof to that of Corollary \ref{c:l2mu}, while replacing the use of Theorem \ref{t:mulambdan} by Theorem \ref{t:simple}, yields the following result:

\begin{cor}\label{c:EVE2}
Let $\Theta$ be a Hermitian $2\times 2$ matrix corresponding to non-degenerate operator $\boldsymbol{A}\ci\Theta$. The eigenvector $f^\Theta_n\in L^2(\mu^\Theta)$ of $\boldsymbol{A}\ci{\Theta}$ corresponding to eigenvalue $\la^{\Theta}_n$ takes the following form in the spectral representation $$f^{\Theta}_n(x) =\chi\ci{\{\lambda^{\Theta}_n\}}(x)\begin{pmatrix} 1-\Theta_{12}c_2(z_n)\\\Theta_{11}c_2(z_n)+c_4(z_n)\end{pmatrix}.$$
The space $L^2(\mu^\Theta)=\clos \spa \left\{f_n^\Theta:n\in \N_0\right\}.$
\end{cor}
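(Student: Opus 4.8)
The plan is to run the proof of Corollary~\ref{c:l2mu} again, with the factorization of the perturbed Weyl $m$-function from the proof of Theorem~\ref{t:simple} playing the part that the residue computation of Theorem~\ref{t:mulambdan} played for $\bA_0$. The mechanism is identical: in the spectral representation on $L^2(\mu^\Theta)$, the eigenspace at a (simple, by Theorem~\ref{t:simple}) eigenvalue $\lambda^\Theta_n$ is carried by $\chi\ci{\{\lambda^\Theta_n\}}$ times a fixed vector spanning $\Ran\mu^\Theta\{\lambda^\Theta_n\}$, and Theorem~\ref{t-GesTs} identifies $\mu^\Theta\{\lambda^\Theta_n\}=-i\lim_{\eps\searrow0}\eps\,M\ci\Theta(\lambda^\Theta_n+i\eps)$.

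First I would recall from the proof of Theorem~\ref{t:simple} the factorization $M\ci\Theta(\lambda)=K(z)L(z)$ valid for $\lambda=(-z-1)(-z+\al+\beta)$, where $K$ is the scalar $c_2(z)/\det L(z)$ and $L(z)$ is the $2\times2$ matrix appearing there, whose second column is $\bigl(1-\Theta_{12}c_2(z),\ \Theta_{11}c_2(z)+c_4(z)\bigr)$ (equivalently, $L=c_2\cdot\operatorname{adj}(\Theta-M_\infty)$). For an eigenvalue $\lambda^\Theta_n\in\rho(\bA_\infty)$ the corresponding $z_n$ is not a pole of $c_1,c_2,c_4$ --- those being exactly the points where $M\ci\Theta$ remains bounded, as observed in the proof of Theorem~\ref{t:simple} --- so $L$ is analytic at $z_n$, $c_2(z_n)\neq0$, and the pole of $M\ci\Theta$ arises solely from $\det L(z_n)=0$, i.e.\ from \eqref{e-thetaevas}. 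Passing to the limit in Theorem~\ref{t-GesTs} along the variable $z$, exactly as in the proof of Theorem~\ref{t:mulambdan}, yields $\mu^\Theta\{\lambda^\Theta_n\}=\kappa_n\,L(z_n)$ for a scalar $\kappa_n\neq0$ (nonzero because $\lambda^\Theta_n$ lies in the support of $\mu^\Theta$).

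Since $\lambda^\Theta_n$ is non-degenerate, the proof of Theorem~\ref{t:simple} gives that both $\Theta-M_\infty(\lambda^\Theta_n)$ and $L(z_n)$ have rank exactly one; so $\Ran\mu^\Theta\{\lambda^\Theta_n\}=\Ran L(z_n)$ is the line spanned by either nonzero column of $L(z_n)$, in particular by its second column $\bigl(1-\Theta_{12}c_2(z_n),\ \Theta_{11}c_2(z_n)+c_4(z_n)\bigr)$, which is the asserted direction of $f^\Theta_n$. (Should the second column vanish, the rank-one property makes the first column nonzero and spanning the same line, so the statement stands after relabeling; and the two roots $z$ of $\lambda^\Theta_n=(-z-1)(-z+\al+\beta)$ give the same $L(z_n)$, since $c_1,c_2,c_4$ are invariant under $z\mapsto\al+\beta-1-z$, so $z_n$ may be taken to be either.) As $\bA\ci\Theta$ has purely discrete spectrum, $f^\Theta_n$ is forced, up to a nonzero scalar, to equal $\chi\ci{\{\lambda^\Theta_n\}}$ times this vector. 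Finally, $L^2(\mu^\Theta)=\clos\spa\{f^\Theta_n:n\in\N_0\}$ follows as in Corollary~\ref{c:l2mu}: the Jacobi operator is limit-circle at both endpoints, so $\bA\ci\Theta$ has purely discrete spectrum \cite{BEZ,Z} and $\mu^\Theta$ is atomic with atoms exactly $\{\lambda^\Theta_n\}$; being a densely defined self-adjoint operator on $L^2_{\al,\beta}(-1,1)$, $\bA\ci\Theta$ has a complete set of eigenfunctions in its spectral-representation space.

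The single delicate point --- more bookkeeping than a real obstacle --- is the residue computation under the substitution $\lambda=(-z-1)(-z+\al+\beta)$ together with the check that the column of $L(z_n)$ one reads off is nonzero. The latter is exactly the non-degeneracy hypothesis (that \eqref{e-degenerate} fails): it guarantees $L(z_n)\neq0$, so the fiber of $L^2(\mu^\Theta)$ over $\lambda^\Theta_n$ is genuinely one-dimensional. In the degenerate case this fiber is two-dimensional, and only minor modifications of the argument above are needed.
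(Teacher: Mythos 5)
Your proposal is correct and follows essentially the same route as the paper, whose proof is just the one-line remark that one repeats the argument of Corollary~\ref{c:l2mu} with Theorem~\ref{t:simple} in place of Theorem~\ref{t:mulambdan}; you have simply carried out that analogy explicitly, reading the eigenvector direction off a nonzero column of the rank-one residue $\mu^\Theta\{\lambda_n^\Theta\}=\kappa_n L(z_n)$. Your added checks (the possible vanishing of the second column, and the invariance of $c_1,c_2,c_4$ under the two choices of $z_n$) go slightly beyond what the paper records but do not change the approach.
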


\begin{Remark}
At first sight, it seems surprising that only two of the entries of $\Theta$ occur in the eigenvector. Upon reflection of how we arrived at the corollary, it becomes clear that this is because the eigenvalue is located exactly where the other column of the matrix is co-linear by Theorem \ref{t:simple}. $\hfill\bullet$
\end{Remark}

\begin{Remark}
\label{r-degenerate1}
If $\lambda_n^\Theta$ is a degenerate eigenvalue, then the corresponding eigenspace in the spectral representation is spanned by $\chi\ci{\{\lambda^{\Theta}_n\}}(x)\begin{pmatrix} 1\\0\end{pmatrix}\in L^2(\mu^\Theta)$ and $ \chi\ci{\{\lambda^{\Theta}_n\}}(x)\begin{pmatrix} 0\\1\end{pmatrix}\in L^2(\mu^\Theta).$ $\hfill\bullet$
\end{Remark}

From \cite[Lemma 2.5]{LT_JST} we know that $\Ran(\bB)$ is cyclic for $\boldsymbol{A}\ci{\Theta}$. Some additional information about the location of the eigenvalues can be obtained by applying the following theorem to perturbations of $\bA_0$.

\begin{theo}[{\cite[Theorem 6.3]{LT_JST}}]\label{t:countable}
Let $\G_t=\G_0 + t\G$, where $\G$, $\G_0$ are Hermitian $2\times 2$ matrices and $\G>0$. Let $\tr\mu^{\G_0+t\G}$ be the scalar spectral measures of $\boldsymbol{A}\ci{\Theta}$. For an arbitrary singular Radon measure $\nu$ on $\R$, 
\[
\nu \perp \tr\mu^{\G_0+t\G}\ti s
\] 
for all except maybe countably many $t\in\R$. 
\end{theo}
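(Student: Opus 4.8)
The plan is to combine a finite-rank Aronszajn--Donoghue type mutual-singularity statement with a $\sigma$-finiteness count. Since $\G>0$, every $\G_t:=\G_0+t\G$ is a Hermitian $2\times2$ matrix, so equation~\eqref{e-MTheta} applies and the Weyl function of $\bA\ci{\G_t}=\bA_0+\bB\G_0\bB^{*}+t\,\bB\G\bB^{*}$ (cf.~\eqref{e-athetapert}) is $M\ci{\G_t}(\lambda)=(\G_t-M_\infty(\lambda))^{-1}=(\G_0+t\G-M_\infty(\lambda))^{-1}$ for $\lambda\in\rho(\bA_\infty)\cap\rho(\bA\ci{\G_t})$. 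By \cite[Lemma~2.5]{LT_JST} the range of $\bB$ is cyclic for $\bA\ci{\G_t}$, so $\tr\mu^{\G_t}$ is a scalar spectral measure of $\bA\ci{\G_t}$ and, with the cyclic set fixed as in the discussion preceding Corollary~\ref{c:EVE2}, it is the trace of the measure appearing in the Herglotz representation \eqref{e-Mfunction} of $M\ci{\G_t}$. My first reduction is the following: it suffices to produce, for every $t$, a Borel carrier $S_t$ of $(\tr\mu^{\G_t})_s$ with the \emph{bounded-overlap} property that each point of $\R$ lies in at most two of the sets $\{S_t\}_{t\in\R}$. Indeed, granting this, for a Radon (hence $\sigma$-finite) measure $\nu$ one writes $\R=\bigcup_n A_n$ with $\nu(A_n)<\infty$ and estimates
\begin{equation*}
\sum_{t\in\R}\nu\bigl(S_t\cap A_n\bigr)=\int_{A_n}\Bigl(\sum_{t\in\R}\chi\ci{S_t}\Bigr)\,d\nu\ \le\ 2\,\nu(A_n)\ <\ \infty,
\end{equation*}
so $\{t:\nu(S_t\cap A_n)>0\}$ is countable for each $n$, whence $\{t:\nu(S_t)>0\}$ is countable; for every other $t$ the disjoint sets $\R\setminus S_t$ and $S_t$ carry $\nu$ and $(\tr\mu^{\G_t})_s$ respectively, i.e.\ $\nu\perp(\tr\mu^{\G_t})_s$.

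To construct $S_t$ I would argue as in the rank-one Aronszajn--Donoghue theory \cite{Aron,Dono}. Point masses are located by Theorem~\ref{t-GesTs}: $\mu^{\G_t}\{\lambda\}=-i\lim_{\eps\downarrow0}\eps M\ci{\G_t}(\lambda+i\eps)$ is nonzero exactly when $M\ci{\G_t}=(\G_t-M_\infty)^{-1}$ has a (first order) pole at $\lambda$, which forces the non-tangential limit $N(x):=\lim_{\eps\downarrow0}M_\infty(x+i\eps)$ to exist in $\C^{2\times2}$ with $\det(\G_t-N(x))=0$. For the singular continuous part I would invoke Poltoratski's boundary-value theorem \cite{NONTAN} in its matrix-Herglotz form (equivalently the exact-support theory behind \cite{GT}): the singular part of the measure of $M\ci{\G_t}$ is carried, up to a set of Lebesgue measure zero, on the set where $M\ci{\G_t}$ blows up non-tangentially, and since $M\ci{\G_t}=(\G_t-M_\infty)^{-1}$ one checks that this again forces $N(x)$ to exist finitely with $\im N(x)=0$ and $\det(\G_t-N(x))=0$; the Lebesgue-null ambiguity is harmless because $(\tr\mu^{\G_t})_s$ is itself Lebesgue-singular. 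Thus I would take
\begin{equation*}
S_t:=\bigl\{\,x\in\R~:~N(x)\ \text{exists in}\ \C^{2\times2},\ \im N(x)=0,\ \text{and}\ \det\bigl(\G_t-N(x)\bigr)=0\,\bigr\}.
\end{equation*}
Here the strict positivity $\G>0$ does the real work: writing $\G_t-N(x)=\G^{1/2}\bigl(\G^{-1/2}(\G_0-N(x))\G^{-1/2}+tI\bigr)\G^{1/2}$, the Hermitian matrix $\G_t-N(x)$ is singular if and only if $-t$ is an eigenvalue of the Hermitian $2\times2$ matrix $\G^{-1/2}(\G_0-N(x))\G^{-1/2}$, whose spectrum is a set of at most two real numbers depending only on $x$. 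Hence each $x$ lies in $S_t$ for at most two values of $t$, which is precisely the required bounded-overlap property.

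The hard part will be the carrier step: making rigorous, in the matrix-valued setting and in the presence of possibly degenerate non-tangential behaviour of $M_\infty$, that $(\tr\mu^{\G_t})_s$ is genuinely carried on $S_t$ as defined. This matrix Aronszajn--Donoghue analysis is the technical heart and is exactly what \cite[Theorem~6.3]{LT_JST}, together with the surrounding machinery of that paper, supplies. Two remarks are in order. First, for operators with purely discrete spectrum---such as the Jacobi operator of Section~\ref{s-examples}---this difficulty disappears entirely: only point masses occur, $S_t$ is described outright by Theorem~\ref{t-GesTs}, and the whole proof reduces to Theorem~\ref{t-GesTs} together with the elementary count above. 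Second, it is the strict inequality $\G>0$ (rather than mere $\G\ge0$) that produces $\sum_{t\in\R}\chi\ci{S_t}\le2$ and hence the conclusion for \emph{all but countably many} $t$; finite-rank spectral averaging alone would only deliver the weaker statement valid for Lebesgue-a.e.\ $t$.
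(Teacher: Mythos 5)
This statement is imported verbatim from \cite[Theorem 6.3]{LT_JST}; the paper gives no proof of it, so there is nothing internal to compare your argument against. That said, your outline is a faithful reconstruction of how the cited result is actually established, and the two steps you isolate are the right ones: (1) the reduction of the countability claim to producing carriers $S_t$ of $(\tr\mu^{\G_t})\ti{s}$ with pointwise overlap at most $d=2$, followed by the $\sigma$-finiteness count, is elementary and correct (the interchange $\sum_{t}\int_{A_n}\chi\ci{S_t}\,d\nu=\int_{A_n}\sum_t\chi\ci{S_t}\,d\nu$ is justified by taking suprema over finite subsets of $t$'s, since all terms are nonnegative); and (2) the factorization $\G_t-N(x)=\G^{1/2}\bigl(\G^{-1/2}(\G_0-N(x))\G^{-1/2}+tI\bigr)\G^{1/2}$ is exactly where the hypothesis $\G>0$ enters, pinning each $x$ to at most two values of $t$. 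The only genuine gap is the one you flag yourself: the assertion that $(\tr\mu^{\G_t})\ti{s}$ is carried on the set where the non-tangential limit $N(x)$ exists finitely, is Hermitian, and makes $\G_t-N(x)$ singular is the matrix-valued Aronszajn--Donoghue/exact-support theorem, which is nontrivial precisely because $M_\infty(x+i\eps)$ can blow up in some directions while staying bounded in others; this is supplied by the machinery of \cite{LT_JST} and you are right not to claim it as elementary. Your closing observation is also worth keeping in mind for this paper's application: since the Jacobi extensions have purely discrete spectrum, only point masses occur, the carrier is read off from Theorem \ref{t-GesTs}, and Proposition \ref{p:mutsing} follows from your counting argument without the full strength of the cited theorem.
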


Setting $\nu = \tr\mu^{\widetilde\Theta}$ for any Hermitian $2\times 2$ matrix $\widetilde\Theta$, allows us to restrict the relative location of eigenvalues of the operators ${\bf A}\ci\Theta$ along certain lines in the space of perturbation parameters:

\begin{prop}\label{p:mutsing}
Let $\boldsymbol{A}\ci{\Theta}$, $\G_0$, $\G$ and $\G_t$  be defined as in Theorem \ref{t:countable}. For any Hermitian $2\times 2$ matrix $\widetilde\Theta$, the spectra $\sigma\left(\boldsymbol{A}\ci{\widetilde\Theta}\right)\cap\sigma\left(\boldsymbol{A}\ci{\Theta_t}\right)=\varnothing$ for all but possibly countably many $t\in\R$.
\end{prop}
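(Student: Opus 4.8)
The plan is to apply Theorem~\ref{t:countable} with a judiciously chosen singular measure $\nu$. First I would recall that the operator $\boldsymbol{A}\ci{\widetilde\Theta}$ has purely discrete spectrum, since the Jacobi operator is limit-circle at both endpoints \cite{BEZ,Z}. Hence its scalar-valued spectral measure $\tr\mu^{\widetilde\Theta}$ (with respect to the fixed cyclic set $\{\widetilde\delta_1,\widetilde\delta_{-1}\}$) is a purely atomic, hence singular, Radon measure on $\RR$, supported exactly on $\sigma\left(\boldsymbol{A}\ci{\widetilde\Theta}\right)$. This is the measure I would take as $\nu$ in Theorem~\ref{t:countable}.

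Next I would set up the hypotheses of Theorem~\ref{t:countable}: the family $\Theta_t = \Theta_0 + t\Theta$ with $\Theta,\Theta_0$ Hermitian $2\times 2$ and $\Theta>0$ is exactly the family in the statement of Proposition~\ref{p:mutsing}, so the theorem applies directly and yields
\begin{align*}
    \nu \perp \left(\tr\mu^{\Theta_0+t\Theta}\right)\ti{s}
\end{align*}
for all but countably many $t\in\RR$. Now $\boldsymbol{A}\ci{\Theta_t}$ also has purely discrete spectrum (again by limit-circle at both endpoints), so $\tr\mu^{\Theta_t}$ is itself purely atomic; thus $\left(\tr\mu^{\Theta_t}\right)\ti{s} = \tr\mu^{\Theta_t}$ and its support is precisely $\sigma\left(\boldsymbol{A}\ci{\Theta_t}\right)$. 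Mutual singularity of two purely atomic measures is equivalent to their atom sets being disjoint, so $\nu \perp \tr\mu^{\Theta_t}$ translates to $\sigma\left(\boldsymbol{A}\ci{\widetilde\Theta}\right)\cap\sigma\left(\boldsymbol{A}\ci{\Theta_t}\right)=\varnothing$. This holds for all but countably many $t$, which is the claim.

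The one genuinely substantive point---and the step I expect to require the most care---is verifying that $\tr\mu^{\widetilde\Theta}$ is a nonzero singular measure to which Theorem~\ref{t:countable} legitimately applies, i.e.\ that an atomic measure is ``singular Radon'' in the sense of that theorem and that it is a valid choice of $\nu$ (it is finite on compacts and singular with respect to Lebesgue measure, so this is routine but should be stated). A secondary subtlety is that $\tr\mu^{\widetilde\Theta}$ is the spectral measure with respect to the \emph{same} cyclic set $\{\widetilde\delta_1,\widetilde\delta_{-1}\}$ used for $\mu^{\Theta_t}$; since the operators have simple or at most double eigenvalues and $\Ran(\bB)$ is cyclic for every self-adjoint extension by \cite[Lemma 2.5]{LT_JST}, the support of $\tr\mu^{\widetilde\Theta}$ equals the full spectrum $\sigma\left(\boldsymbol{A}\ci{\widetilde\Theta}\right)$ (no eigenvalue is ``missed'' by the cyclic set), so disjointness of atom sets is genuinely disjointness of spectra. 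Everything else is a direct quotation of Theorem~\ref{t:countable}.
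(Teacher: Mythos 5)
Your proposal is correct and follows essentially the same route as the paper: choose $\nu=\tr\mu^{\widetilde\Theta}$, note it is a singular Radon measure because the spectrum is purely discrete, apply Theorem \ref{t:countable}, and translate mutual singularity of atomic measures into disjointness of the spectra. The extra remarks you add about cyclicity of $\Ran(\bB)$ ensuring no eigenvalue is missed are a welcome refinement of a point the paper leaves implicit.
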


\begin{proof}
In Theorem \ref{t:countable} we choose $\nu = \tr\mu^{\widetilde\Theta}$. Since the $\boldsymbol{A}\ci{\Theta}$ has only eigenvalues, $ \nu$ is a singular Radon measure.  By the conclusion of Theorem \ref{t:countable}, we obtain that $\tr\mu^{\widetilde\Theta}\perp\tr\mu^{\G_t}\ti{s}$ for all but possibly countably many $t$. Since the eigenvalues of $\boldsymbol{A}\ci{\widetilde\Theta}$ and $\boldsymbol{A}\ci{\Theta_t}$ are discrete (see e.g.~\cite{BEZ, AZBook, Z}), the set of eigenvalues equals these operators' spectra, respectively.
\end{proof}

\subsection{Image of $f_0$ under $V\ciG$}\label{ss-V}

We include a simple yet instructive application of the Representation Theorem \ref{t:repr-01} onto the first eigenfunction $f_0$ from Corollary \ref{c:EVE2}. In principle, we can replace $f_0$ by any given function $f\in L^2(\mu).$ We chose $f_0$ just to keep expressions simple.

We recall {\cite[Theorem 5.1]{LT_JST}}, which provides us with a formula for the spectral representation of $\boldsymbol{A}\ci{\Theta}$, i.e.~for the unitary operator $V\ciG: L^2(\mu)\to L^2(\mu^\G)$ intertwining $\boldsymbol{A}\ci{\Theta}$ and $\boldsymbol{M}$ for $\G$ a Hermitian $2\times 2$ matrix
\[
V\ciG \boldsymbol{A}\ci{\Theta} = \boldsymbol{M} V\ciG,
\]
which maps each column of $\boldsymbol{B}$ to itself. 

\begin{theo}
\label{t:repr-01}
The spectral representation $V\ciG$ takes the form

\begin{align}
\label{Repr-01}
\left(V\ci\ciG h
\be\right)(t) 
=
h(t) \be
-
\G\int\ci\R \frac{h(x)-h(t)}{x-t} [d\mu(x)] \be \nonumber
\end{align}

for $\be\in \C^d$ and compactly supported $h\in C^1(\R)$.
\end{theo}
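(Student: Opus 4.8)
The statement is a direct specialization of \cite[Theorem 5.1]{LT_JST} to the Jacobi setting, so the plan is to check that the abstract hypotheses of that theorem are in force and then invoke it; there is no genuinely new analysis to perform.

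First I would record that the present data fit the rank-$d$ perturbation framework of \cite{LT_JST} with $d=2$: the operator $\bA_0$ of \eqref{e-jacobi0} is self-adjoint and bounded below, $\bB\colon\CC^2\to\cH_{-1}(\bA_0)$ is the coordinate map whose columns are $\widetilde{\delta}_{-1},\widetilde{\delta}_1\in\cH_{-1}(\bA_0)$, $\Theta$ is a Hermitian $2\times2$ matrix, and $\bA\ci\Theta=\bA_0+\bB\Theta\bB^*$ is the well-defined self-adjoint operator of Theorem \ref{t-jacobipert} (interpreted in the form sense of Remark \ref{r-pertforms}). The essential structural input is that $\Ran(\bB)$ is cyclic for $\bA\ci\Theta$, which holds by \cite[Lemma 2.5]{LT_JST}; hence the spectral theorem furnishes a unitary $V\ciG\colon L^2(\mu)\to L^2(\mu^\G)$ intertwining $\bA\ci\Theta$ with multiplication $\boldsymbol{M}$ by the independent variable, and this $V\ciG$ is uniquely determined by the normalization that it send each column of $\bB$ to itself.

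Second I would make explicit that $\mu$ and $\mu^\G$ are exactly the matrix-valued spectral measures appearing in \cite[Theorem 5.1]{LT_JST}: both are taken with respect to the \emph{same} ordered cyclic set $\{\widetilde{\delta}_1,\widetilde{\delta}_{-1}\}$ (equivalently, the columns of $\bB$), and $\mu^\G$ aligns with $M\ciG$ through Theorem \ref{t-GesTs}, a compatibility already spelled out in the paragraph preceding Corollary \ref{c:EVE2}. With this dictionary the hypotheses and conclusion of \cite[Theorem 5.1]{LT_JST} transcribe verbatim and yield the displayed formula for $\be\in\CC^2$ and compactly supported $h\in C^1(\RR)$. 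It remains to note that the right-hand side is well defined: $x\mapsto (h(x)-h(t))/(x-t)$ extends continuously across $x=t$ with value $h'(t)$ and is bounded on $\RR$ since $h\in C^1$ has compact support, while $\mu$ is a finite matrix measure---in fact supported on the discrete set $\{\la_n\}_{n\in\N_0}$---so $\int_\RR \tfrac{h(x)-h(t)}{x-t}[d\mu(x)]\be$ is an absolutely convergent (indeed finite) sum.

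The only point requiring care is a bookkeeping matter rather than a real obstacle: one must confirm that the normalization conventions used here---the form of $\bB^*$ displayed just above Theorem \ref{t-jacobipert}, the extra sign in $\widetilde{w}_1$ inherited from \cite[Lemma 6.10.10]{BdS}, and the definition of $M\ciG$ via \eqref{e-MTheta}---coincide with those of \cite{LT_JST}, so that no stray constants contaminate the formula. Once that correspondence is verified nothing further is needed, and the density of $\{h\be: h\in C^1(\RR)\text{ compactly supported},\ \be\in\CC^2\}$ in $L^2(\mu)$, which is what determines $V\ciG$ on all of $L^2(\mu)$, is itself supplied by \cite[Theorem 5.1]{LT_JST}.
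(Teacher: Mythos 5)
Your proposal is correct and matches the paper's treatment: the paper does not prove this statement but simply recalls it as \cite[Theorem 5.1]{LT_JST}, with the surrounding text supplying exactly the setup you verify (cyclicity of $\Ran(\bB)$ via \cite[Lemma 2.5]{LT_JST}, the normalization that $V\ciG$ fixes the columns of $\bB$, and the remark that the integrand has no singularity for $h\in C^1(\R)$). Your additional hypothesis-checking is sound but not more than what the paper implicitly assumes.
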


Notice that for $h\in C^1(\R),$ the integral does not have a singularity by the Mean Value Theorem. 

\begin{cor}\label{t-eigentrace}
When $\Theta$ is such that $\boldsymbol{A}\ci\Theta$ is non-degenerate and $\sigma\left(\boldsymbol{A}_0\right)\cap\sigma\left(\boldsymbol{A}\ci{\Theta}\right)=\varnothing$, we have
\begin{align*}
    \left(V\ci\ciG f_0\right)(t)&= \frac{ -2^{\al+\beta+1}(\al^2+\beta^2)\Gamma(1-\al-\beta)}{(\alpha+\beta+t)\Gamma(1-\al)\Gamma(-\beta)}
\begin{pmatrix}
\Theta_{12}/\alpha + \Theta_{11}/\beta\\
\Theta_{22}/\alpha + \Theta_{21}/\beta
\end{pmatrix}.
\end{align*}
In particular, the image under $V\ciG$ of the first eigenvector of $\boldsymbol{A}$ is being smeared out over all the eigenvectors of $\boldsymbol{A}\ciG$.
\end{cor}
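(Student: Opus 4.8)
The plan is to feed $f_0$ into the Representation Theorem~\ref{t:repr-01}, after first replacing the characteristic function $\chi\ci{\{\lambda_0\}}$ by a genuinely $C^1$ cut-off. By Corollary~\ref{c:l2mu} and Lemma~\ref{l:cs} one has, in $L^2(\mu)$,
\[
f_0 = \chi\ci{\{\lambda_0\}}(\cdot)\,\be,\qquad \be=\begin{pmatrix}1\\ \beta/\al\end{pmatrix},\qquad \lambda_0=-(\al+\beta),
\]
since $c_4(0)=\beta/\al$. Now $\sigma(\bA_0)=\{\lambda_n\}_{n\in\N_0}$ is discrete (so $\lambda_0$ is isolated from $\sigma(\bA_0)\setminus\{\lambda_0\}$, indeed $\lambda_1-\lambda_0=2-\al-\beta>0$), and by hypothesis $\lambda_0\notin\sigma(\bA\ci\Theta)$, a closed set, so $\lambda_0$ is isolated from $\sigma(\bA\ci\Theta)$ as well. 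Choose $h\in C^1_c(\R)$ with $h(\lambda_0)=1$ and $\supp h$ a neighbourhood of $\lambda_0$ small enough to miss $(\sigma(\bA_0)\setminus\{\lambda_0\})\cup\sigma(\bA\ci\Theta)$. Then $h$ and $\chi\ci{\{\lambda_0\}}$ agree at every atom of $\mu$, hence $h\,\be=f_0$ in $L^2(\mu)$, and therefore $V\ciG f_0=V\ciG(h\,\be)$.

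Next I would apply Theorem~\ref{t:repr-01} to $h\,\be$. Writing $w_n:=\mu\{\lambda_n\}$ and using that $\mu$ is carried by $\{\lambda_n\}$ with $h(\lambda_n)=\delta_{n0}$,
\[
(V\ciG f_0)(t)=h(t)\be-\G\sum_{n\in\N_0}\frac{\delta_{n0}-h(t)}{\lambda_n-t}\,w_n\be .
\]
This identity holds $\mu^\G$-a.e., i.e.\ it suffices to read it off at the atoms $\lambda_m^\G$ of $\mu^\G$; there $h(\lambda_m^\G)=0$ by the choice of $\supp h$, so every term with $n\ge 1$ drops out and only the $n=0$ term survives, yielding the compact intermediate formula
\[
(V\ciG f_0)(t)=\frac{\G\,w_0\be}{\,t-\lambda_0\,},\qquad t\in\sigma(\bA\ci\Theta).
\]
Note that no convergence of the series is needed, since at the relevant points the sum reduces to a single term.

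It remains to evaluate $\G w_0\be$ explicitly, which is routine. By Lemma~\ref{l:cs}, $c_1(0)=\al/\beta$ and $c_4(0)=\beta/\al$, so Theorem~\ref{t:mulambdan} gives
\[
w_0=C\begin{pmatrix}\al/\beta & 1\\ 1 & \beta/\al\end{pmatrix}=\frac{C}{\al\beta}\begin{pmatrix}\al\\\beta\end{pmatrix}\begin{pmatrix}\al & \beta\end{pmatrix},\qquad C=\frac{-\al\,2^{\al+\beta+1}\,\Gamma(1-\al-\beta)}{\Gamma(1-\al)\Gamma(-\beta)} .
\]
Since $\begin{pmatrix}\al & \beta\end{pmatrix}\be=(\al^2+\beta^2)/\al$, one gets $w_0\be=\dfrac{C(\al^2+\beta^2)}{\al^2\beta}\begin{pmatrix}\al\\\beta\end{pmatrix}$, and then, using $\begin{pmatrix}\al\G_{11}+\beta\G_{12}\\ \al\G_{21}+\beta\G_{22}\end{pmatrix}=\al\beta\begin{pmatrix}\G_{12}/\al+\G_{11}/\beta\\ \G_{22}/\al+\G_{21}/\beta\end{pmatrix}$ together with $t-\lambda_0=t+\al+\beta$, the expression $\G w_0\be/(t-\lambda_0)$ simplifies to exactly the claimed formula. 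For the ``smearing'' statement: $\bA\ci\Theta$ has infinitely many (discrete, and simple by Theorem~\ref{t:simple} under non-degeneracy) eigenvalues $\lambda_m^\G$; the scalar $1/(t-\lambda_0)$ is nonzero at every one of them; and the vector $\begin{pmatrix}\G_{12}/\al+\G_{11}/\beta\\ \G_{22}/\al+\G_{21}/\beta\end{pmatrix}$ is the same for all $m$ and is nonzero, because $V\ciG$ is unitary and $f_0\neq 0$. Hence $V\ciG f_0$ has a nonzero coordinate in every one-dimensional eigenspace of $\bA\ci\Theta$, which is the asserted spreading over all eigenvectors.

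The only genuinely delicate point is the first step: bridging the gap between the $C^1_c$-statement of Theorem~\ref{t:repr-01} and the characteristic function $f_0$. This is where the discreteness of $\sigma(\bA_0)$ (so that the cut-off $h$ can be taken to represent $f_0$ against $\mu$) and the disjointness hypothesis $\sigma(\bA_0)\cap\sigma(\bA\ci\Theta)=\varnothing$ (so that $h$ vanishes on $\sigma(\bA\ci\Theta)$ and thus contributes nothing against $\mu^\G$) are both used essentially; once this reduction is made, the argument is pure bookkeeping with Gamma-function values.
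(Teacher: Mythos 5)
Your proposal is correct and follows essentially the same route as the paper: replace $\chi\ci{\{\lambda_0\}}$ by a $C^1$ bump $h$ vanishing at all other atoms of $\mu$ and at all of $\sigma(\boldsymbol{A}\ci\Theta)$ (which is where both the discreteness of the spectra and the hypothesis $\sigma(\boldsymbol{A}_0)\cap\sigma(\boldsymbol{A}\ci\Theta)=\varnothing$ enter), apply Theorem \ref{t:repr-01}, observe that only the $\lambda_0$ atom survives, and evaluate $\Theta\,\mu\{\lambda_0\}\be/(t+\al+\beta)$ via Lemma \ref{l:cs} and Theorem \ref{t:mulambdan}. The only differences are cosmetic (you keep $\be=(1,\beta/\al)^T$ instead of rescaling to $(\al,\beta)^T$ and dividing by $\al$ at the end) plus a slightly more explicit justification of the ``smearing'' remark.
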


\begin{Remark}
For the interpretation of this function in $L^2(\mu^\Theta)$ only the values of $(V\ci\ciG  f_0)(t)$ at $t=\lambda^\Theta_n$ matter. Further, a scalar factor will occur from taking the orthogonal projection of the vector in $(V\ci\ciG  f_0)(\lambda_n^\Theta)$ onto the direction of the vector in $f_n^\Theta$ from Corollary \ref{c:EVE2}.$\hfill\bullet$
\end{Remark}

\begin{proof}
The location of the zeroth ($n=0$) eigenfunction $f_0$ from Corollary \ref{c:l2mu} is obtained from \eqref{e-Lambda}:
$
\lambda_0 = -\alpha-\beta.
$
Moreover, Lemma \ref{l:cs} in conjunction with Corollary \ref{c:l2mu} informs us that
\[
\alpha f_0(x) = \widetilde f(x):= \chi\ci{\{-\alpha-\beta\}}(x)
\begin{pmatrix}
\alpha\\\beta
\end{pmatrix}.
\]
In order to apply Theorem \ref{t:repr-01}, we need to use a compactly, once differentiable function. To that end, let
$\be = \begin{pmatrix}
\alpha\\\beta
\end{pmatrix}$ 
and where is some $h(x)\in C^1$ which equals $1$ at $s_0 = -\alpha-\beta$ and drops to zero appropriately to the left and right of $s_0$. This necessitates two assumptions about the function $h(x)$: Without loss of generality (because the spectrum of $A$ is discrete), we assume that
\begin{align}
    \label{e:condition2}
    h(\lambda_n) &= 0
    \qquad\text{for all }n\in \N.
\intertext{and}
\label{e:condition1}
    h(\lambda_n^\Theta) &= 0
    \qquad\text{for all }n\in \N_0.
\end{align}

In $L^2(\mu)$ we have $\widetilde f_0(x) = h(x) \be$. So, we obtain
\begin{align*}
    \left(V\ci\ciG \widetilde f\right)(t) =\left(V\ci\ciG h
\be\right)(t)
&=
h(t) \be
-
\G\int\ci\R \frac{h(x)-h(t)}{x-t} [d\mu(x)] \be
=
-
\G\int\ci\R \frac{h(x)}{x-t} [d\mu(x)] \be,
\end{align*}
upon realizing \eqref{e:condition1} and using Corollary \ref{c:EVE2}. 

Recall that $\mu$ is the matrix-valued atomic measure given in Theorem \ref{t:mulambdan}. Now, by Condition \eqref{e:condition2}, function $h(x)$ is only non-zero at the atom of the zeroth eigenvalue, when $x = x_0 =-\alpha-\beta$. Therefore, we obtain
\begin{align*}
    \left(V\ci\ciG \widetilde f\right)(t)&=-\G\int\ci\R \frac{h(x)}{x-t} [d\mu(x)] \be
= \frac{1}{\alpha+\beta+t}\G \mu\{\lambda_0\} \be
\\
& = \frac{-\al 2^{\al+\beta+1}\Gamma(1-\al-\beta)}{(\alpha+\beta+t)\Gamma(1-\al)\Gamma(-\beta)}
\begin{pmatrix}
\Theta_{11} & \Theta_{12}\\
\Theta_{21} & \Theta_{22}
\end{pmatrix}
\left( \begin{array}{cc}
\alpha/\beta & 1 \\
1 & \beta/\alpha
\end{array} \right)
\begin{pmatrix}
\alpha\\\beta
\end{pmatrix}
\\
& = \frac{-\al (\alpha^2+\beta^2) 2^{\al+\beta+1}\Gamma(1-\al-\beta)}{(\alpha+\beta+t)\Gamma(1-\al)\Gamma(-\beta)}
\begin{pmatrix}
\Theta_{12}/\alpha + \Theta_{11}/\beta\\
\Theta_{22}/\alpha + \Theta_{21}/\beta
\end{pmatrix}.
\end{align*}
The lemma follows from $ f_0(x) = \widetilde f(x)/\alpha$ and the linearity of $V\ciG$.
\end{proof}

\begin{Remark}
 The condition $\sigma\left(\boldsymbol{A}_0\right)\cap\sigma\left(\boldsymbol{A}\ci{\Theta}\right)=\varnothing$ is a bit stronger than necessary. In the proof we only use \eqref{e:condition2} and \eqref{e:condition1}. Proposition \ref{p:mutsing} provides us with sufficient (but by no means necessary) conditions for many $\Theta$.$\hfill\bullet$
\end{Remark}

\begin{Remark}
We note that it is also possible to start with any function $h{\bf e}\in L^2(\mu^{\Theta_1})$, with $h\in C^1(\R)$ compactly supported, by using $V\ci{\Theta_2-\Theta_1}:L^2(\mu^{\Theta_1})\to L^2(\mu^{\Theta_2})$ in Theorem \ref{t:repr-01}.$\hfill\bullet$
\end{Remark}

{\bf Acknowledgements.}

The authors would like to thank Pavel Kurasov and Annemarie Luger for many insightful conversations regarding the construction of the perturbation and the use of boundary triples and pairs.

\begin{Backmatter}


\printaddress

\end{Backmatter}

\end{document}